\DeclareMathAlphabet{\mathfr}{U}{euf}{m}{n}
\newtheorem{theorem}{Theorem}[section]
\newtheorem{conjecture}[theorem]{Conjecture}
\newtheorem{proposition}[theorem]{Proposition}
\newtheorem{corollary}[theorem]{Corollary}
\newtheorem{hypothesis}[theorem]{Hypothesis}
\newtheorem{lemma}[theorem]{Lemma}
\newtheorem{definition}[theorem]{Definition}
\theoremstyle{remark}
\newtheorem{remark}[theorem]{Remark}
\newcommand\acc[2]{\ensuremath{{}^{#1}\hskip-0.3ex{#2}}}
\newcommand{\Q}{\mathbb Q}
\newcommand{\Qbar}{{\overline{\mathbb Q}}}
\newcommand{\Gal}{\mathrm{Gal}}
\newcommand{\C}{\mathbb C}
\newcommand{\GL}{\mathrm{GL}}
\newcommand{\M}{\mathrm{M }}
\newcommand{\End}{\operatorname{End}}
\newcommand{\Hom}{\operatorname{Hom}}
\newcommand{\Frob}{\operatorname{Frob}}
\newcommand{\USp}{\operatorname{USp}}
\newcommand{\Ind}{\operatorname{Ind}}
\newcommand{\Aut}{\operatorname{Aut}}
\newcommand{\Zar}{\operatorname{Zar}}
\newcommand{\p}{\mathfrak{p}}
\newcommand{\Rr}{\mathcal{R}}
\newcommand{\GSp}{\operatorname{GSp}}
\newcommand{\Tr}{\operatorname{Tr}}
\newcommand{\SU}{\mathrm{SU}}
\newcommand{\U}{\operatorname{U}}
\newcommand{\ST}{\mathrm{ST}}
\newcommand{\Nm}{\operatorname{Nm}}
\newcommand{\Symm}{\mathrm{Symm}}
\newcommand{\Char}{\mathrm{Char}}
\newcommand{\ra}{\rightarrow}
\begin{document}
\title[Potentially $\GL_2$-type abelian varieties and the Sato--Tate conjecture]{Tate module tensor decompositions and the Sato--Tate conjecture for certain abelian varieties potentially of $\GL_2$-type}
\author{Francesc Fit\'e}

\address{Departament de Matemàtiques i Informàtica\\
Universitat de Barcelona\\Gran via de les Corts Catalanes, 585\\
08007 Barcelona, Catalonia}
\email{ffite@ub.edu}
\urladdr{https://www.ub.edu/nt/ffite/}

\author{Xavier Guitart}
\address{Departament de Matemàtiques i Informàtica\\
Universitat de Barcelona\\Gran via de les Corts Catalanes, 585\\
08007 Barcelona, Catalonia
}
\email{xevi.guitart@gmail.com}
\urladdr{http://www.maia.ub.es/~guitart/}
\date{\today}

\begin{abstract} We introduce a tensor decomposition of the $\ell$-adic Tate module of an abelian variety $A_0$ defined over a number field which is geometrically isotypic. If $A_0$ is potentially of $\GL_2$-type and defined over a totally real number field, we use this decomposition to describe its Sato--Tate group and to prove the Sato--Tate conjecture in certain cases. 
\end{abstract}
\maketitle
\tableofcontents

\section{Introduction}

Let $A_0$ be an abelian variety defined over a number field $k_0$ of dimension $g\geq 1$. Throughout the article, we assume that $A_0$ is \emph{geometrically isotypic}. By this we mean that the base change $A_{0,\Qbar}=A_0\times_{k_0}\Qbar$ of $A_0$ to an algebraic closure $\Qbar$ of~$\Q$ is isogenous to the power of a simple abelian variety $B$ defined over~$\Qbar$, say
$$
A_{0,\Qbar}\sim B^d\,,\qquad \text{for some $d\geq 1$}\,. 
$$
The main novelty of this article is the description of the rational $\ell$-adic Tate module $V_\ell(A_0)$ attached to $A_0$ as (the induction of) a tensor product of an Artin representation and a system of $\ell$-adic representations. To achieve such a description, we may reduce to the case that $A_0$ is simple (over $k_0$), assumption which we will make from now on. 

Under these assumptions, the endomorphism algebra of $B$, denoted $\End(B)$, is a central simple algebra over a number field $M$, whose degree we denote by~$m$. Let~$n$ be the Schur index of $\End(B)$. This means that $\End(B)$ is a matrix algebra over an $M$-central division algebra $D$ such that $[D:M]=n^2$. Let $G_{k_0}$ denote the absolute Galois group of~$k_0$. The next result accounts for the essential statements of Theorem \ref{theorem: inddec}.

\begin{theorem}\label{theorem: 1} Let $A_0$ be a simple abelian variety defined over a number field $k_0$ of dimension $g\geq 1$ and geometrically isotypic, and let $\ell$ be a prime. Then there exist:
\begin{enumerate}[i)]
\item a finite Galois extension $k/k_0$ of degree dividing $m$;
\item a number field $F$;
\item primes $\lambda_1,\dots,\lambda_r$ of $F$ lying over $\ell$, where $r=m/[k:k_0]$;
\item $F$-rational $\lambda_i$-adic representations $V_{\lambda_i}(B)^{\alpha_B}$ of $G_k$ of $\Qbar_\ell$-dimension $2g/(dnm)$, for $1\leq i\leq r$; and
\item a finite image representation $V(B,A)^{\alpha_B}$ of $G_k$ realizable over $F$;
\end{enumerate}
such that there is an isomorphism
$$
V_\ell(A_0)\otimes \Qbar_\ell \simeq \Ind_{k_0}^k\left(\bigoplus_{i=1}^{r} V_{\lambda_i}(B)^{\alpha_B}\otimes_{\Qbar_\ell} V_{\lambda_i}(B,A)^{\alpha_B}\right)
$$
of $\Qbar_\ell[G_{k_0}]$-modules. In the above isomorphism, $V_{\lambda_i}(B,A)^{\alpha_B}$ denotes the tensor product $V(B,A)^{\alpha_B}\otimes_{F,\sigma_i} \Qbar_\ell$ taken with respect to the embedding $\sigma_i\colon F\hookrightarrow \Qbar_\ell$ corresponding to the prime $\lambda_i$.
\end{theorem}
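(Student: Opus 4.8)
The plan is to read off the decomposition from the structure of $V_\ell(A_0)$ as a module over the endomorphism algebra, and then to recover the action of $G_{k_0}$ from the Galois action on that algebra. Write $D=\End(B)$, a division algebra with center $M$, $[M:\Q]=m$ and Schur index $n$, so that $\End(A_{0,\Qbar})\simeq\mathrm{M}_d(D)$ is a central simple $M$-algebra of reduced degree $dn$. I will use the standard facts that $V_\ell(A_0)$ is a faithful $\End(A_{0,\Qbar})\otimes_\Q\Q_\ell$-module of $\Q_\ell$-dimension $2g$, that $G_{k_0}$ acts on it semilinearly with respect to the natural action on $\End(A_{0,\Qbar})$, and that it is free of rank $2g/m$ over its center $M\otimes_\Q\Q_\ell$. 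The argument will proceed in three steps: (1) choose $k$ and decompose $V_\ell(A_0)\otimes\Qbar_\ell$ along the central idempotents of $M\otimes_\Q\Qbar_\ell$; (2) recognize the $G_{k_0}$-action on this decomposition as an induction from $G_k$; (3) split each isotypic piece, via Morita equivalence and the Skolem--Noether theorem, into the claimed tensor product, tracking the cocycle obstruction and descending everything to $F$.

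For steps (1) and (2), I take $k$ to be the subfield of $\kbar$ fixed by the kernel of the homomorphism $G_{k_0}\to\Aut(M)$ coming from the action on the center. Then $k/k_0$ is finite Galois with $\Gal(k/k_0)$ equal to the image of that homomorphism, $G_k$ acts trivially on $M$, and $M^{G_{k_0}}=M^{\Gal(k/k_0)}=:M_0$. Writing $M\otimes_\Q\Qbar_\ell\simeq\prod_{j\in\Sigma}\Qbar_\ell$ with $\Sigma=\Hom_\Q(M,\Qbar_\ell)$ and $e_j$ the corresponding central idempotents, set $U_j=e_j\cdot(V_\ell(A_0)\otimes\Qbar_\ell)$, so that $V_\ell(A_0)\otimes\Qbar_\ell=\bigoplus_{j\in\Sigma}U_j$ with each $U_j$ of $\Qbar_\ell$-dimension $2g/m$ by the freeness over the center. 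Since $G_k$ acts trivially on all of $M\otimes_\Q\Qbar_\ell$ it fixes every $e_j$ and therefore stabilizes each $U_j$, whereas $\Gal(k/k_0)$ permutes the $U_j$ through its action on $\Sigma$ by precomposition, which is free. Hence the orbits all have size $[k:k_0]=[M:M_0]$, there are $r=m/[k:k_0]=[M_0:\Q]$ of them, and the stabilizer in $G_{k_0}$ of any one $U_{j_i}$ is exactly $G_k$. Choosing one $j_i$ in each orbit and invoking the standard criterion for induced modules --- a module which is the direct sum of the translates of a submodule stable under the stabilizer of that submodule is induced from it --- yields
\[
V_\ell(A_0)\otimes\Qbar_\ell\;\simeq\;\Ind_{k_0}^{k}\Bigl(\bigoplus_{i=1}^{r}U_{j_i}\Bigr).
\]

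For step (3), fix $i$. Over $\Qbar_\ell$ the $j_i$-th factor of $\End(A_{0,\Qbar})\otimes_\Q\Qbar_\ell$ is isomorphic to $\mathrm{M}_{dn}(\Qbar_\ell)$, as $\Qbar_\ell$ splits $D$; hence $U_{j_i}\simeq S_{j_i}\otimes_{\Qbar_\ell}Y_{j_i}$ with $S_{j_i}\simeq\Qbar_\ell^{dn}$ the simple module and $\dim_{\Qbar_\ell}Y_{j_i}=2g/(dnm)$. Because $G_k$ acts on this matrix algebra by $\Qbar_\ell$-algebra automorphisms that are trivial on the center, Skolem--Noether produces for each $\sigma\in G_k$ an element $u_\sigma\in\GL_{dn}(\Qbar_\ell)$ realizing that automorphism, unique up to scalars; then $\sigma\mapsto u_\sigma$ is a projective representation of $G_k$ on $S_{j_i}$ with a $2$-cocycle $\alpha_B$ valued in $\Qbar_\ell^\times$ and with finite image in $\PGL_{dn}(\Qbar_\ell)$, that image being the image $\Gal(k'/k)$ of $G_k$ in $\Aut_M(\End(A_{0,\Qbar}))$, where $k'/k$ is the finite extension over which all geometric endomorphisms of $A_0$ are defined. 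The operator $u_\sigma^{-1}\circ\sigma$ on $U_{j_i}$ commutes with $\mathrm{M}_{dn}(\Qbar_\ell)$, hence acts on the multiplicity space $Y_{j_i}$ alone, defining $\rho(\sigma)\in\GL(Y_{j_i})$; so $\sigma$ acts as $u_\sigma\otimes\rho(\sigma)$, with $\rho$ projective of cocycle $\alpha_B^{-1}$. I then put $V_{\lambda_i}(B,A)^{\alpha_B}:=(S_{j_i},u_\bullet)$ and $V_{\lambda_i}(B)^{\alpha_B}:=(Y_{j_i},\rho)$; their tensor product is then the honest $G_k$-module $U_{j_i}$, since the cocycles $\alpha_B$ and $\alpha_B^{-1}$ cancel. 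As $u_\bullet$ factors through the finite group $\Gal(k'/k)$, one may take $\alpha_B$ of finite order, so $V(B,A)^{\alpha_B}$ is a finite-image representation --- of a finite central extension of $\Gal(k'/k)$ --- realizable over a number field $F$, which I enlarge to contain $M$ together with a splitting field of $\End(A_{0,\Qbar})$; picking embeddings $\sigma_i\colon F\hookrightarrow\Qbar_\ell$ with $\sigma_i|_M=j_i$ and letting $\lambda_i$ be the induced prime above $\ell$ makes the labelling compatible with the orbits and realizes $V_{\lambda_i}(B,A)^{\alpha_B}$ as $V(B,A)^{\alpha_B}\otimes_{F,\sigma_i}\Qbar_\ell$. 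Together with the displayed isomorphism this gives the theorem.

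The hard part will be step (3). First, one must keep the cocycle bookkeeping exact, so that the two projective factors genuinely carry mutually inverse obstructions and their tensor product is an honest $G_k$-representation. More substantially, one must show that the $\ell$-adic factors $Y_{j_i}$ --- a priori just $\Qbar_\ell$-vector spaces with a twisted $G_k$-action --- descend to an $F$-rational $\lambda_i$-adic representation $V_{\lambda_i}(B)^{\alpha_B}$, coherently as $i$ varies; this requires performing the Morita reduction $G_k$-equivariantly over the arithmetic fields (the completions $M_{\mathfrak{l}}$ of $M$ above $\ell$, and then $F_{\lambda_i}$), using that both $V_\ell(A_0)$ and the $M$-algebra $\End(A_{0,\Qbar})$ with its Galois action are already defined over number fields. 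Finally, justifying the notation by identifying $V_{\lambda_i}(B)^{\alpha_B}$ with a suitably twisted primitive $\ell$-adic Tate module of the geometric factor $B$ requires descending $B$ itself up to isogeny to $k$, which is possible by Ribet's descent theorems since $B^{\sigma}\sim B$ for every $\sigma\in G_{k_0}$; but this is not needed for the displayed isomorphism.
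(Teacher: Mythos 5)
Your route to the displayed isomorphism is genuinely different from the paper's and is correct in outline. The paper never decomposes $V_\ell(A_0)\otimes\Qbar_\ell$ over the split endomorphism algebra: instead it fixes a compatible system of isogenies $\mu_s\colon {}^sB\to B$ making $B$ a $k$-variety, uses Tate's vanishing theorem $H^2(G_k,\Qbar^\times)=0$ to split the resulting $M^\times$-valued $2$-cocycle by a continuous cochain $\alpha_B$, and then \emph{constructs} the two factors directly --- $V_\lambda(B)^{\alpha_B}$ from the Tate module of $B$ twisted by $\mu_{s,*}$ and $\alpha_B(s)$, and $V(B,A)^{\alpha_B}$ from $\Hom(B_K,A_K)\otimes_E F$ twisted by $(\mu_s^{-1})^*$ and $\alpha_B(s)^{-1}$ --- proving the tensor decomposition by checking that $f\mapsto f_*$ is $G_k$-equivariant (Proposition \ref{proposition: decomposition Tate}). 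Your Clifford-theoretic argument (isotypic decomposition over the split algebra, Skolem--Noether, multiplicity space) gives a cleaner, more conceptual proof of the $\Qbar_\ell$-isomorphism, and deriving the induction from the free action of $\Gal(k/k_0)$ on $\Hom_\Q(M,\Qbar_\ell)$ is an attractive replacement for the paper's appeal to Milne. What the paper's explicit construction buys is items (iv) and (v) of the statement essentially for free.

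That is precisely where your gaps are. First, to make $V(B,A)^{\alpha_B}$ an honest finite-image representation of $G_k$ (not merely of some finite central extension of $\Gal(K_0/k)$) you must split the Skolem--Noether $2$-cocycle over $G_k$ itself by a continuous cochain with values in a number field; Schur's theory of covers of the finite group $\Gal(K_0/k)$ does not suffice, since the surjection $G_k\to\Gal(K_0/k)$ need not lift to the cover. This is exactly where Tate's theorem enters in the paper (Lemma \ref{lemma: alpha}), and it is the one nontrivial input your write-up never actually invokes. Second, items (iv)--(v) require a \emph{single} $F$-rational representation $V(B,A)^{\alpha_B}$ whose specializations along all the $\sigma_i$ recover the simple-module factors, together with $F$-rational (not merely $\Qbar_\ell$-linear) structures on the multiplicity spaces $Y_{j_i}$, coherently in $i$; your construction produces these objects separately for each $j_i$ over $\Qbar_\ell$, and the descent is flagged as ``the hard part'' but not carried out. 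In the paper this coherence is automatic because both factors are specializations along $\sigma_i$ of fixed $F$-rational objects, namely $V_\ell(B)$ as an $E\otimes\Q_\ell$-module and $\Hom(B_K,A_K)\otimes_E F$. So the proposal establishes the displayed isomorphism but not the full statement as written.
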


This theorem is proven in the course of Section~\ref{section: td}. Along the way we describe the number fields $F$ and $k$, and construct the Artin representation $V(B,A)^{\alpha_B}$ and the $\lambda_i$-adic representations $V_{\lambda_i}(B)^{\alpha_B}$. The representations $V_{\lambda_i}(B)^{\alpha_B}$ and $V(B,A)^{\alpha_B}$ arise naturally as projective representations. The obstruction for these projective representations to lift to genuine representations is given by two cohomology classes in $H^2(G_k,M^{\times})$ that, by a theorem of Tate, can be trivialized after enlarging the field of coefficients. It lies at the core of the proof of Theorem~\ref{theorem: 1} the fact that these two cohomology classes are inverses to each other. 

There are at least two known particular cases of this decomposition in the literature. On the one hand, it is known when~$A_0$ is $\Qbar$-isogenous to the power of an elliptic curve $B$ defined over $\Qbar$ which admits a model up to $\Qbar$-isogeny \emph{defined over~$k_0$} (this follows from \cite[Thm. 3.1]{Fit13} when $B$ does not have CM and from \cite[(3-8)]{FS14} when it does). We note that if $g$ is odd, then there does exist a  model up to $\Qbar$-isogeny for $B$ defined over $k_0$ (see Remark \ref{remark: instance}), but this is not always satisfied when $g$ is even as shown in \cite[\S3D]{FS14}. On the other hand, an analogous tensor decomposition has been obtained by N. Taylor \cite[\S3.3]{Tay19} when $A_0$ is an abelian surface with QM; see also \cite[Prop. 9.2.1]{BCGP18}. Taylor's explicit, but intriguing to us, construction of the tensor decomposition in the case of a QM abelian surface was a source of inspiration for the present article. Section \ref{section: td} is our attempt to give a uniform, general, and more conceptual explanation of this phenomenon.

The second main contribution of the article is an application of the obtained description of the Tate module of~ $A_0$ in the context of the Sato--Tate conjecture for abelian varieties potentially of $\GL_2$-type. Following Ribet, we say that $A_0$ is of $\GL_2$-type if there exists a number field of degree $g$ that injects into $\End(A_{0})$. In this article, by saying that $A_0$ is \emph{potentially of $\GL_2$-type}, we mean that there exists a number field of degree $g$ that injects into $\End(A_{0,\Qbar})$. In this case, the absolutely simple factor $B$ is often referred to as a ``building block''. If $k_0$ is totally real, three mutually excluding situations arise:
\begin{enumerate}
\item[(CM)] $A_0$ has potential complex multiplication, that is, there exists a number field of degree $2g$ that injects into $\End(A_{0,\Qbar})$.
\item[(RM)] $\End(B)$ is a totally real field of degree $\dim(B)$, in which case we say that~$B$ has real multiplication.
\item[(QM)] $\End(B)$ is a quaternion division algebra over a totally real field of degree $\dim(B)/2$ in which case we say that $B$ has quaternionic multiplication. 
\end{enumerate}
Ribet \cite{Rib92} gave proofs of these facts when $A_0$ is defined over $\Q$ (see \cite[Thm. 3.3, Prop. 3.4]{Gui12} for proofs in the general case). 

Ribet extensively studied abelian varieties of $\GL_2$-type over $\Q$ and showed that they share many features with elliptic curves. In particular, Ribet \cite[\S3]{Rib92} showed that one can attach a rank 2 compatible system of $\ell$-adic representations to an abelian variety of $\GL_2$-type. This was extended by Wu \cite{Wu18} to abelian varieties potentially of $\GL_2$-type defined over a totally real number field. 

From Section~\ref{section: GL2} on, we assume that both $k_0$ and the field $k$ produced by Theorem~\ref{theorem: 1} are totally real, and that $A_0$ is potentially of $\GL_2$-type and does not have potential CM (this already implies that $A_0$ is geometrically isotypic). In this case, we can use the results of Wu to show that the representations $V_{\lambda_i}(B)^{\alpha_B}$ are part of a rank 2 compatible system $(V_{\lambda}(B)^{\alpha_B})_\lambda$ with certain desirable properties. 

The Sato--Tate conjecture is an equidistribution statement regarding the Frobenius conjugacy classes acting on $V_\ell(A_0)$. The distribution of these classes is predicted to be governed by a compact real Lie group $\ST(A_0)$, called the Sato--Tate group of $A_0$. As shown by Serre \cite{Ser89}, this equidistribution is implied by the conjectural analytic behavior of partial Euler products attached to the irreducible representations of $\ST(A_0)$. In Section~\ref{section: STgroups}, we use Theorem \ref{theorem: 1} to determine $\ST(A_0)$. 
In Section~\ref{section: scenarios}, using deep potential automorphy results (covered by \cite{BLGGT14}) relative to the compatible system $(V_\lambda(B)^{\alpha_B})_\lambda$, we prove the desired analytic properties of the relevant partial Euler products in certain situations.

These situations are described by the following theorem. Let $K_0/k_0$ denote the minimal extension over which all the endomorphisms of $A_0$ are defined.

\begin{theorem}\label{theorem: 2}
Suppose that $k_0$ is a totally real and that $A_0$ is an abelian variety defined over $k_0$ of dimension $g\geq 1$ which is $\Qbar$-isogenous to the power of an abelian variety $B$ which is either:
\begin{enumerate}[i)]
\item an elliptic curve; or
\item an abelian surface with QM; or
\item an abelian surface with RM; or
\item an abelian fourfold with QM.
\end{enumerate}
If the field extension $k/k_0$ from Theorem \ref{theorem: 1} is trivial and $K_0/k_0$ is solvable, then the Sato--Tate conjecture holds for $A_0$.
\end{theorem}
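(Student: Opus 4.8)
The plan is to verify, in each of the four cases, that the analytic hypotheses of Serre's criterion hold for all the irreducible constituents of the symmetric powers of $V_\ell(A_0)$, via the potential automorphy of the rank~$2$ compatible system $(V_\lambda(B)^{\alpha_B})_\lambda$. First I would invoke Theorem~\ref{theorem: 1}: since $k=k_0$, we have
$$
V_\ell(A_0)\otimes\Qbar_\ell\simeq \bigoplus_{i=1}^r V_{\lambda_i}(B)^{\alpha_B}\otimes_{\Qbar_\ell} V_{\lambda_i}(B,A)^{\alpha_B},
$$
so the $L$-functions attached to representations of $\ST(A_0)$ decompose (up to finitely many Euler factors) into products of $L$-functions of the form $L\big(\varrho,\mathrm{Symm}^{a_1}V_{\lambda_{i_1}}(B)^{\alpha_B}\otimes\cdots\otimes\mathrm{Symm}^{a_t}V_{\lambda_{i_t}}(B)^{\alpha_B}\otimes W\big)$, where $W$ is an Artin representation built from the finite-image factors $V_{\lambda_i}(B,A)^{\alpha_B}$ and the finite-order twists appearing in the identification of $\ST(A_0)$ carried out in Section~\ref{section: STgroups}. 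Because $k_0$ (and $k$) is totally real and $A_0$ has no potential CM, Wu's construction together with the results of Section~\ref{section: GL2} shows $(V_\lambda(B)^{\alpha_B})_\lambda$ is a regular, essentially self-dual, Galois-conjugate-preserved weakly compatible system with distinct Hodge--Tate weights; in the QM cases one passes to the building block's associated compatible system, which is again of this shape.

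Next I would reduce the needed holomorphy and non-vanishing to potential automorphy. By \cite{BLGGT14}, each $V_\lambda(B)^{\alpha_B}$ is potentially automorphic: there is a finite totally real extension $L/k_0$, which can be chosen \emph{linearly disjoint} from any prescribed finite extension, over which the base change of the system is automorphic, associated to a cuspidal automorphic representation of $\GL_2(\A_L)$. The tensor-product and symmetric-power functoriality inputs needed to make the relevant $L$-functions over $L$ automorphic (hence entire and non-vanishing on $\Re(s)=1$) are available in precisely the four listed cases: for an elliptic curve or an RM abelian surface the constituents are governed by a single $\GL_2$-system and one uses the symmetric power potential automorphy of \cite{BLGGT14} (this is the classical Sato--Tate input); for the QM surface and QM fourfold, the Tate module splits as a tensor of two (or the relevant small number of) copies of rank-$2$ systems, and one uses Rankin--Selberg/tensor-product automorphy for $\GL_2\times\GL_2$ (and its symmetric powers), again within the reach of the potential automorphy machinery for these bounded ranks. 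The Artin factor $W$ contributes an Artin $L$-function which is meromorphic and non-vanishing on $\Re(s)=1$ by Brauer; here the hypothesis that $K_0/k_0$ is \emph{solvable} is what guarantees that these Artin representations, and the finite extensions over which everything is defined, are solvable over $k_0$, so Arthur--Clozel solvable base change lets us descend the automorphy of the twisted $L$-functions from $L$ back to $k_0$ — this is the step that genuinely uses the solvability assumption.

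Finally, assembling these pieces: for every irreducible representation $\varrho$ of $\ST(A_0)$ the partial Euler product $L(\varrho,s)$ agrees up to a finite number of factors with a product of automorphic $L$-functions over $k_0$ (of $\GL_n$-type for bounded $n$, twisted by solvable Artin representations), each of which is entire (when $\varrho$ is nontrivial) and non-vanishing on $\Re(s)\geq 1$; hence $L(\varrho,s)$ extends to a holomorphic non-vanishing function on $\Re(s)\geq 1$ for $\varrho$ nontrivial, while for $\varrho$ trivial it has a simple pole at $s=1$. By Serre's analytic criterion \cite{Ser89} this yields the equidistribution of Frobenius conjugacy classes with respect to $\ST(A_0)$, i.e.\ the Sato--Tate conjecture for $A_0$.

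I expect the main obstacle to be the descent step: ensuring that, after the potential automorphy extension $L/k_0$ furnished by \cite{BLGGT14} (over which we only know automorphy of the symmetric-power and tensor-product $L$-functions), the twisting Artin representation $W$ and the field $L$ can be simultaneously arranged so that everything lives in a solvable tower over $k_0$, allowing Arthur--Clozel base change to descend automorphy of all the twisted $L$-functions to $k_0$. The solvability of $K_0/k_0$ is exactly the hypothesis that makes the Artin side cooperate; one must check that the potential automorphy fields for the finitely many symmetric powers relevant to each of the four cases can be chosen compatibly (e.g.\ linearly disjoint from $K_0$, and totally real), so that Brauer induction over the solvable group $\Gal(K_0/k_0)$ combines cleanly with solvable base change. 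Verifying this case-by-case — and checking that the ranks involved (at most a $\GL_2\times\GL_2$ tensor in the QM fourfold case, with its symmetric powers) stay within the proven range of potential automorphy and of the needed functoriality — is where the real work lies.
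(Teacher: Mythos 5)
Your overall strategy is the paper's: reduce to invertibility of the partial Euler products via Serre's criterion, use Theorem~\ref{theorem: 1} with $k=k_0$ to decompose them into symmetric powers of the rank-$2$ systems twisted by Artin representations, obtain potential automorphy from \cite{BLGGT14}, and combine Brauer induction on the Artin factor with Arthur--Clozel solvable base change (this being precisely where the solvability of $K_0/k_0$ enters, to pass from the potential automorphy field $k'$ up to $k'K_e$). However, you have misassigned which of the four cases require which analytic input, and this creates a real gap. The correct dichotomy is governed by $m=[M:\Q]$, the degree of the \emph{center} of $\End(B)$: for an elliptic curve and for a QM abelian \emph{surface} one has $M=\Q$, a single rank-$2$ system, identity component $\SU(2)$, and irreducible representations $\Symm^e\otimes\eta$ — no $\GL_2\times\GL_2$ input is needed, since the quaternionic structure is absorbed into the finite-image factor $V_\lambda(B,A)^{\alpha_B}$. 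For an RM abelian surface and a QM abelian \emph{fourfold} one has $[M:\Q]=2$, there are two non-isomorphic rank-$2$ systems $V_{\lambda}(B)^{\alpha_B}$ and $V_{\overline\lambda}(B)^{\alpha_B}$ attached to the two embeddings of $M$, the identity component is $\SU(2)\times\SU(2)$, and the irreducible representations are $\Symm^{e_1}\otimes\Symm^{e_2}\otimes\eta$. Your plan treats the RM surface as ``governed by a single $\GL_2$-system,'' which misses the entire second $\SU(2)$ factor and would not prove equidistribution for that case; conversely you attribute a $\GL_2\times\GL_2$ tensor structure to the QM surface, which it does not have.

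Second, for the $m=2$ cases the decisive analytic ingredient is not ``tensor-product automorphy'' but Shahidi's theorem \cite{Sha81} on the holomorphy and non-vanishing on $\Re(s)\geq 1$ of the Rankin--Selberg $L$-function of a pair of cuspidal automorphic representations, which applies only after one verifies that the two representations $\Symm^{e_1}(V_\lambda(B)^{\alpha_B})|_{G_{E_i}}$ and $\Symm^{e_2}(V_{\overline\lambda}(B)^{\alpha_B})|_{G_{E_i}}\otimes\chi_i$ are not dual to each other; the paper deduces this non-duality from Proposition~\ref{proposition: noiso}. Your proposal omits both Shahidi's input and the non-duality check, so as written the argument for cases iii) and iv) is incomplete. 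This restriction to Rankin--Selberg pairs is also exactly why the theorem is limited to $m\leq 2$.
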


In the above theorem, the condition of $B$ falling in one of the cases $i),\dots,iv)$ amounts to requiring that the center $M$ of the endomorphism algebra of $B$ be a number field of degree $m\leq 2$. This constraint on the degree $m$ ensures the applicability of results of Shahidi \cite{Sha81} on the invertibility of the Rankin-Selberg product of  automorphic $L$-functions, which are essential to the proof.

Let us make two remarks on the hypotheses of the theorem. On the one hand, the hypothesis that $k/k_0$ be trivial is vacuous for $i)$ and $ii)$. On the other hand, one can dispense with the hypothesis that $K_0/k_0$ be solvable when $g\leq 3$. For $g=2$, the extension $K_0/k_0$ is in fact known to be always solvable (as a byproduct of the classification in \cite{FKRS12}) and for $g=3$ it can only fail to be solvable when~$B$ is an elliptic with CM (as follows from the classification of \cite{FKS20}), in which case the theorem is known to hold as well (see Remark \ref{remark: CMB}).

Some particular instances of Theorem~\ref{theorem: 2} are known. Indeed, the works of Johansson \cite{Joh17} and N. Taylor \cite{Tay19} altogether imply the theorem when $g=2$; in other words, when $A_0$ is $\Qbar$-isogenous to the square of an elliptic curve, to an RM abelian surface, or to a QM abelian surface. Their proof is based on a case-by-case analysis using the classification of Sato--Tate groups of abelian surfaces defined over totally real number fields\footnote{Of the 35 possibilities for the Sato--Tate group of an abelian surface defined over a totally real field, 28 occur only among abelian surfaces which are geometrically isotypic and potentially of $\GL_2$-type. It should be noted that the works of Johansson and N. Taylor yield the Sato--Tate conjecture in 5 non geometrically isotypic cases as well: indeed, they yield the Sato--Tate conjecture in all but 2 of the 35 possible cases.} (as achieved in \cite{FKRS12}). Our proof of Theorem \ref{theorem: 2} is indebted to \cite{Joh17} and \cite{Tay19} in many aspects.

A second situation where Theorem~\ref{theorem: 2} was essentially known is when $g\leq 3$ and~$B$ is an elliptic curve with CM that admits a model up isogeny defined over $k_0$. Indeed, the computation of the moments of the measure governing the equidistribution of the normalized Frobenius traces of~$A_0$ in this situation was carried out in \cite[\S3]{FS14} (for $g=2$) and in \cite[\S2]{FLS18} (for $g=3$). 

Modular abelian varieties are a natural source of geometrically isotypic abelian varieties of $\GL_2$-type defined over $\Q$. Restricted to this setting, the hypotheses on the extensions $K_0/k_0$ and $k/k_0$ are automatically satisfied and Theorem \ref{theorem: 2} can be presented in the following way.

\begin{corollary}
Let $f=\sum a_m q^m\in S_2(\Gamma_1(N))$ be a newform of Nebentypus~$\varepsilon$. Let $A_f$ denote the abelian variety defined over $\Q$ associated to $f$ by the Eichler--Shimura construction. If $f$ is non-CM, suppose that the field $\Q(\{a_m^2/\varepsilon(m)\}_{(m,N)=1})$ has degree at most $2$ over $\Q$. Then the Sato--Tate conjecture holds for $A_f$. 
\end{corollary}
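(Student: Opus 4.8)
The plan is to deduce the corollary from Theorem~\ref{theorem: 2}; the entire task is to check that $A_f$ and its building block meet the hypotheses stated there. Throughout one takes $k_0=\Q$, which is totally real. By the Eichler--Shimura construction $A_f$ is $\Q$-simple of dimension $g=[E_f:\Q]$, where $E_f=\Q(\{a_m\}_{m\geq 1})$ is the Hecke field, and $E_f$ embeds into $\End(A_f)$; hence $A_f$ is of $\GL_2$-type, a fortiori potentially of $\GL_2$-type, and therefore geometrically isotypic (for $f$ non-CM this is the implication recalled in the introduction, and for $f$ CM the variety $A_{f,\Qbar}$ is a power of a CM elliptic curve). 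Write $A_{f,\Qbar}\sim B^d$ for the building block $B$.

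The next step is to identify $B$ via Ribet's structure theory \cite{Rib92}. For $f$ non-CM, Ribet shows that the center $M$ of the central simple algebra $\End(B)$ is the totally real field $\Q(\{a_m^2/\varepsilon(m)\}_{(m,N)=1})$, and that $\End(B)$ is either equal to $M$ --- so that $B$ has real multiplication, or $B$ is an elliptic curve when $M=\Q$ --- or is a quaternion division algebra over $M$, so that $B$ has quaternionic multiplication. Thus the hypothesis $[\Q(\{a_m^2/\varepsilon(m)\}_{(m,N)=1}):\Q]\leq 2$ says precisely that $m=[M:\Q]\leq 2$, which by the discussion immediately following Theorem~\ref{theorem: 2} is equivalent to $B$ being of one of the four types i)--iv). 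For $f$ CM the building block is a CM elliptic curve, hence of type i).

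It then remains to verify the two extension hypotheses of Theorem~\ref{theorem: 2}. That $K_0/\Q$ is solvable follows from Ribet's theorem that every endomorphism of $A_{f,\Qbar}$ becomes defined over an abelian extension of $\Q$, so that $K_0/\Q$ is abelian. That the extension $k/\Q$ produced by Theorem~\ref{theorem: 1} is trivial I would extract from the construction of $k$ in Section~\ref{section: td}: for a modular $A_f$ the rank $2$ compatible system of $\lambda$-adic representations attached to $A_f$ by Ribet is already defined over $\Q$, which forces $k=\Q$ (and in the elliptic-curve and QM-abelian-surface cases $k=\Q$ holds in any event). Granting these checks, Theorem~\ref{theorem: 2} gives the Sato--Tate conjecture for $A_f$ when $f$ is non-CM, and Remark~\ref{remark: CMB} gives it when $f$ is CM.

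The one place that requires care, rather than bookkeeping, is translating the intrinsic invariants of Theorem~\ref{theorem: 1} into explicit modular data: recognizing the center $M$ of $\End(B)$ as $\Q(\{a_m^2/\varepsilon(m)\}_{(m,N)=1})$, and identifying the pair $\big(k,\,(V_\lambda(B)^{\alpha_B})_\lambda\big)$ with $\big(\Q,\,(\rho_{f,\lambda})_\lambda\big)$ up to the relevant twist. Both of these are contained in Ribet's theory of building blocks attached to modular forms, so once they are in place there is no further obstacle.
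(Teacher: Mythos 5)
Your proposal is correct and follows essentially the same route as the paper: reduce to Theorem~\ref{theorem: 2} by invoking Ribet--Pyle's structure theory for $A_{f,\Qbar}\sim B^d$ (identifying $M$ with $\Q(\{a_m^2/\varepsilon(m)\}_{(m,N)=1})$ and matching $[M:\Q]\le 2$ with cases i)--iv)), the fact that $K_0/\Q$ is abelian by González--Lario, and the triviality of $k/\Q$ -- which the paper gets from $M_f\subseteq F_f=\End(A_f)$, i.e.\ $M_0=M$ in Theorem~\ref{theorem: inddec}, the same fact underlying your ``system already defined over $\Q$'' remark. The CM case is likewise disposed of via Remark~\ref{remark: CMB}, as you do.
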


As we will later discuss, we note that there exist modular abelian varieties $A_f$ of arbitrarily large dimension for which the field  $\Q(\{a_m^2/\varepsilon(m)\}_{(m,N)=1})$ has degree at most~$2$.

\subsection*{Conventions and notations.} Throughout the article $k_0$ is a number field and all of its algebraic field extensions are assumed to be contained in a fixed algebraic closure $\Qbar$ of $\Q$. For each prime $\ell$, we fix an algebraic closure of $\Q_\ell$ and all finite extensions of $\Q_\ell$ are assumed to be contained in this fixed algebraic closure. We work in the category of abelian varieties up to isogeny. In particular, isogenies become invertible and $\Hom(C,D)$, for a pair abelian varieties $C$ and $D$ defined over $k_0$, is equipped with a $\Q$-vector space structure. Given a field extension $k/k_0$, we write $C_k$ to denote the base change $C\times_{k_0} k$ of $C$ from $k_0$ to $k$. We refer to nonzero prime ideals of the ring of integers of a number field $E$ simply by primes of $E$. We denote by $I$ the identity matrix, whose size should always be clear from the context.

\subsection*{Acknowledgements} Thanks to Andrew Sutherland and John Voight for suggesting the existence of a theorem of the type of Theorem~\ref{theorem: inddec} (at least in the case of a geometric power of an elliptic curve) in a conversation during the conference ``Arithmetic geometry, Number Theory, and Computation", held at MIT in August 2018.  Thanks to the referee for valuable suggestions for the revision and improvement of the article. This material is based upon work supported by the National Science Foundation under Grant No. DMS-1638352.  Fit\'e was additionally supported by the Simons Foundation grant 550033 and by MTM2015-63829-P.  Guitart  was  partially  supported  by  projects  MTM2015-66716-P and MTM2015-63829-P. This project has received funding from the European Research Council (ERC) under the European Union's Horizon 2020 research and innovation programme (grant agreement No 682152). Fit\'e expresses deep gratitude to IAS for the excellent working conditions offered during the academic year 2018-19.

\section{A Tate module tensor decomposition}\label{section: td}

Throughout this section $A_0$ will denote a simple abelian variety of dimension $g\geq 1$ defined over the number field~$k_0$ such that:

\begin{hypothesis}\label{hypothesis: main}
 $A_0$ is \emph{geometrically isotypic}, that is, $A_{0,\Qbar} \sim B^d$, where $B$ is a simple abelian variety defined over $\Qbar$ and $d\geq 1$.
\end{hypothesis}

By the Wedderburn theorem, we know that $\End(B)$ is a central division algebra over a number field $M$. Let $n$ denote the Schur index of $\End(B)$ and $m=[M\colon\Q]$.

Let $K_0/k_0$ denote the endomorphism field of $A_0$, that is, the minimal extension of $k_0$ such that 
$$
\End(A_{0,K_0})\simeq \End(A_{0,\Qbar})\,.
$$ 
The extension $K_0/k_0$ is Galois and finite. Let $K/k_0$ denote a finite Galois extension containing $K_0/k_0$.
Without loss of generality we may assume that $B$ is defined over $K$ and that
$$
\Hom(B_K,A_{0,K})\simeq \Hom(B_\Qbar,A_{0,\Qbar})\,.
$$ 

\begin{definition}
  For a subextension $k/k_0$ of $K/k_0$, the abelian variety $B$ is called a $k$-abelian variety (or $k$-variety for short) if for every $s\in \Gal(K/k)$ there exists an isogeny $\mu_s \colon {}^s B\ra B$ compatible with the endomorphisms of $B$, that is, such that
  \begin{align}\label{eq: compatibility}
  \mu_s \circ {}^s \varphi = \varphi\circ \mu_s\text{ for all }\varphi\in \End(B).    
  \end{align}

\end{definition}

Let $k/k_0$ be a subextension of $K/k_0$ such that $B$ is a $k$-variety (such a  subextension obviously exist). From now on, write $A$ for the base change $A_0\times_{k_0}k$.
Fix a system of isogenies $\{\mu_s\}_{s\in \Gal(K/k)}$ compatible with $\End(B)$ in the sense of \eqref{eq: compatibility}. We can, and do, assume that $\mu_s$ is the identity for every $s\in G_K$. 

If we equip $M^\times$ with the trivial action of $\Gal(K/k)$, the map
$$
c_B\colon \Gal(K/k)\times \Gal(K/k)\rightarrow M^ \times\,,\qquad c_B(s,t)=\mu_{st}\circ \acc s \mu_t^ {-1}\circ\mu_{s}^{-1}\,,
$$
satisfies the $2$-cocycle condition and defines a cohomology class $\gamma_B\in H^2(K/k,M^{\times})$. The cocycle $c_B$ (resp. the cohomology class $\gamma_B$) gives rise by inflation to a continuous cocycle in $Z^2(G_k,M^\times)$ (resp. a cohomology class in $H^2(G_k,M^\times)$) that we will also denote by $c_B$ (resp. $\gamma_B$).

\begin{lemma}\label{lemma: alpha}
There is a continuous map
$$
\alpha_B\colon G_k\rightarrow \Qbar^\times
$$
such that for every $s,t \in G_k$ we have 
\begin{equation}\label{equation: 1boundaryfin}
c_B(s,t)=\frac{\alpha_B(s)\alpha_B(t)}{\alpha_B(st)}\,.
\end{equation}
\end{lemma}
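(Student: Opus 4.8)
The plan is to read \eqref{equation: 1boundaryfin} as the statement that $\gamma_B$ becomes trivial once the coefficient group is enlarged from $M^\times$ to $\Qbar^\times$, and to deduce this from a theorem of Tate.

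First I would note that $c_B$, being inflated from the finite group $\Gal(K/k)$, is a locally constant (hence continuous) $2$-cocycle with values in $M^\times\subseteq\Qbar^\times$ for the \emph{trivial} $G_k$-action, and so defines a class in $H^2(G_k,\Qbar^\times)$ (continuous cohomology, $\Qbar^\times$ discrete). For the trivial action the inhomogeneous coboundary of a $1$-cochain $\alpha\colon G_k\to\Qbar^\times$ is $(s,t)\mapsto\alpha(s)\alpha(t)\alpha(st)^{-1}$; hence the existence of a continuous $\alpha_B$ satisfying \eqref{equation: 1boundaryfin} is \emph{equivalent} to the vanishing of the image of $\gamma_B$ in $H^2(G_k,\Qbar^\times)$.

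The essential input is then Tate's theorem that $H^2(G_k,\Qbar^\times)=0$ for a number field $k$, with $\Qbar^\times$ carrying the trivial $G_k$-action. For orientation: as an abstract abelian group $\Qbar^\times\cong\mu_\infty\oplus V$ with $V$ a $\Q$-vector space (because $\Qbar^\times$ is divisible with torsion subgroup $\mu_\infty$ and $\mathrm{Ext}^1_{\Z}(V,\mu_\infty)=0$), and since the action is trivial this is a decomposition of $G_k$-modules; then $H^i(G_k,V)=0$ for $i\geq 1$ and one is reduced to $H^2(G_k,\mu_\infty)=H^2(G_k,\Q/\Z)=0$, which is the content of Tate's theorem. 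Granting this, the image of $\gamma_B$ in $H^2(G_k,\Qbar^\times)$ vanishes, so $c_B=\partial\alpha_B$ for a continuous $\alpha_B\colon G_k\to\Qbar^\times$ --- precisely \eqref{equation: 1boundaryfin}.

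Finally I would remark that $\alpha_B$ is far from unique (it may be multiplied by any continuous homomorphism $G_k\to\Qbar^\times$), and that, $G_k$ being profinite and $\Qbar^\times$ discrete, $\alpha_B$ has finite image and therefore takes values in $F^\times$ for some number field $F\supseteq M$; this records the sense in which $\gamma_B$ --- possibly already nonzero in $H^2(G_k,M^\times)$ --- is ``trivialised after enlarging the field of coefficients''. I do not expect a genuine obstacle here: once \eqref{equation: 1boundaryfin} is reformulated cohomologically, everything is formal apart from the invocation of Tate's vanishing theorem.
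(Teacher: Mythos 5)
Your proposal is correct and is essentially the paper's own argument: the lemma is exactly the statement that the image of $\gamma_B$ in $H^2(G_k,\Qbar^\times)$ (trivial action) vanishes, which the paper deduces from the same theorem of Tate (cited as \cite[Thm. 6.3]{Rib92}). Your additional remarks on the reduction to $H^2(G_k,\Q/\Z)=0$ and on the finiteness of the image of $\alpha_B$ are accurate elaborations of what the paper leaves implicit.
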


\begin{proof} 
The lemma is a consequence of a theorem of Tate (see \cite[Thm. 6.3]{Rib92}), which states that $H^2(G_k,\Qbar^\times)$ is trivial, where $\Qbar^\times$ is endowed with the trivial action of $G_k$. 
\end{proof}

Let $E$ denote a maximal subfield contained in $\End(B)$. The field $E$ contains the center with degree $[E:M]=n$. Since~$\alpha_B$ is continuous we may enlarge $K$ so that~$\alpha_B$ is trivial when restricted to $G_K$, and we will do this from now on. Let $F$ denote a number field containing $E$ and the values of $\alpha_B$.

Fix a rational prime~$\ell$ and an embedding $\sigma\colon F\rightarrow \Qbar_\ell$.
Denote by $\lambda=\lambda(\sigma)$ the prime of $F$ above $\ell$ for which $\sigma$ factors via the natural inclusion of $F$ into its completion $F_\lambda$ at $\lambda$.

Let $V_\ell(A)$ (resp. $V_\ell(B)$) denote the rational Tate module of $A$ (resp. $B$). For a prime $\lambda$ of $F$ above $\ell$, use the natural $E$-module structure of $V_\ell(B)$ to define
\begin{align}\label{eq: Vsigma}
V_\lambda(B)=V_\ell(B)\otimes_{E\otimes \Q_\ell,\sigma} \Qbar_\ell\,.
\end{align}
Here the tensor product is taken with respect to the map induced by the inclusions  $E\subseteq F$ and $\sigma\colon F\hookrightarrow \Qbar_\ell$. The module $V_\lambda(B)$ has dimension 
$$
\dim_{\Qbar_\ell}(V_\lambda(B))=
\frac{2\dim B}{[E\colon \Q]}= \frac{2g}{dnm}\,.
$$
It is endowed with an action of $G_k$ by the next lemma. 

\begin{lemma}
The map
$$
\varrho_\lambda^{\alpha_B}\colon G_k\rightarrow \GL(V_\lambda(B))\,,
$$
defined for $s \in G_k$ as the composition
\begin{equation}\label{equation: comp1}
\varrho_\lambda^{\alpha_B}(s)\colon V_\lambda(B)\xrightarrow {s\otimes 1} V_\lambda(\acc{s}B) \xrightarrow{\mu_{s,*}} V_\lambda(B) \xrightarrow {1\otimes \sigma(\alpha_B(s))} V_\lambda(B)\,,
\end{equation}
is a continuous representation. Here, $\mu_{s,*}$ denotes the isomorphism of Tate modules induced by the isogeny $\mu_s\colon \acc s B \rightarrow B$. 
\end{lemma}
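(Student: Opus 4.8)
The plan is to verify that the map $\varrho_\lambda^{\alpha_B}$ is a well-defined group homomorphism that is continuous; the fact that each $\varrho_\lambda^{\alpha_B}(s)$ is an invertible $\Qbar_\ell$-linear automorphism of $V_\lambda(B)$ will be essentially immediate, since it is a composition of three isomorphisms: the semilinear map $s\otimes 1$ induced by the Galois action on $V_\ell(B)$ (followed by base change), the isomorphism $\mu_{s,*}$ coming from the isogeny $\mu_s\colon \acc{s}B\to B$ (which becomes an isomorphism since we work up to isogeny), and multiplication by the scalar $\sigma(\alpha_B(s))\in\Qbar_\ell^\times$. First I would spell out why the composition lands in $\GL(V_\lambda(B))$ and in particular why $s\otimes 1$ actually maps $V_\lambda(B)$ into $V_\lambda(\acc{s}B)$: the Galois action sends the natural $E$-module structure on $V_\ell(B)$ to the $E$-module structure on $V_\ell(\acc{s}B)$ via the \emph{same} embedding $\sigma$ (because $E\subseteq\End(B)$ is a ring of honest endomorphisms, not twisted by $s$), so the tensor construction \eqref{eq: Vsigma} is compatible and $s\otimes 1$ descends to a map $V_\lambda(B)\to V_\lambda(\acc{s}B)$; then $\mu_{s,*}$ is $E$-linear by the compatibility condition \eqref{eq: compatibility}, hence descends to $V_\lambda(\acc{s}B)\to V_\lambda(B)$ after the base change along $\sigma$.

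The heart of the argument is the multiplicativity $\varrho_\lambda^{\alpha_B}(st)=\varrho_\lambda^{\alpha_B}(s)\varrho_\lambda^{\alpha_B}(t)$. Here I would compute directly. Ignoring for a moment the scalar twist, the composition $\mu_{s,*}\circ(s\otimes 1)$ defines a map $G_k\to\GL(V_\lambda(B))$ which is only a homomorphism up to the cocycle: tracking the definitions, for $s,t\in G_k$ one has
\begin{equation*}
\bigl(\mu_{s,*}\circ(s\otimes1)\bigr)\circ\bigl(\mu_{t,*}\circ(t\otimes1)\bigr)=\sigma(c_B(s,t))^{-1}\cdot\bigl(\mu_{st,*}\circ(st\otimes1)\bigr),
\end{equation*}
where $c_B(s,t)=\mu_{st}\circ\acc{s}\mu_t^{-1}\circ\mu_s^{-1}\in M^\times$ is exactly the cocycle defined before Lemma~\ref{lemma: alpha} (the sign/inverse placement to be pinned down by a careful diagram chase using that $s\otimes1$ transports $\mu_t$ to $\acc{s}\mu_t$). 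Since $c_B(s,t)\in M^\times\subseteq E^\times$ acts on $V_\lambda(B)$ through the scalar $\sigma(c_B(s,t))\in\Qbar_\ell^\times$, and since by Lemma~\ref{lemma: alpha} we have $c_B(s,t)=\alpha_B(s)\alpha_B(t)/\alpha_B(st)$ in $\Qbar^\times$ (hence $\sigma(c_B(s,t))=\sigma(\alpha_B(s))\sigma(\alpha_B(t))\sigma(\alpha_B(st))^{-1}$ in $\Qbar_\ell^\times$), the scalar twist by $1\otimes\sigma(\alpha_B(s))$ precisely cancels the discrepancy: the two scalar factors $\sigma(\alpha_B(s))\sigma(\alpha_B(t))$ combine with $\sigma(c_B(s,t))^{-1}$ to give $\sigma(\alpha_B(st))$, which is the twist appearing in $\varrho_\lambda^{\alpha_B}(st)$. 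This is the step I expect to be the main obstacle, not because it is deep but because getting the cocycle identity with the correct inverses and the correct order of composition requires care; the conceptual point is simply that the scalar-valued obstruction to $\{\mu_s\}$ being a genuine action is the coboundary of $\alpha_B$.

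Finally I would address continuity. The map $s\mapsto s\otimes1$ is continuous for the profinite topology on $G_k$ and the $\ell$-adic topology, being induced by the continuous Galois action on the Tate module; the assignment $s\mapsto\mu_{s,*}$ factors through the finite quotient $\Gal(K/k)$ (we arranged $\mu_s=\mathrm{id}$ for $s\in G_K$), hence is locally constant; and $s\mapsto\sigma(\alpha_B(s))$ is continuous by Lemma~\ref{lemma: alpha} together with continuity of $\sigma$ on the (discretely topologized, finitely generated) group of values of $\alpha_B$. A product of continuous maps into $\GL(V_\lambda(B))$ is continuous, so $\varrho_\lambda^{\alpha_B}$ is a continuous representation, completing the proof. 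I would also remark that $\varrho_\lambda^{\alpha_B}$ restricted to $G_K$ is just the natural $G_K$-action on $V_\lambda(B)$, since there $\mu_s$ and $\alpha_B(s)$ are trivial, which is a useful sanity check and will be used later.
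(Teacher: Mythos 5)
Your proposal is correct and follows essentially the same route as the paper: check $E$-linearity (hence well-definedness on $V_\lambda(B)$) via the compatibility \eqref{eq: compatibility}, verify multiplicativity by observing that the cocycle $c_B(s,t)$ arising from composing the untwisted maps is exactly cancelled by the coboundary of $\alpha_B$ from Lemma~\ref{lemma: alpha}, and reduce continuity to the restriction to $G_K$, where the action is the natural one. The cocycle identity you leave to a ``diagram chase'' does come out with the inverses as you state, so there is no gap.
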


\begin{proof}
We first check that the action is indeed $\Qbar_\ell$-linear. This amounts to note that, in virtue of \eqref{eq: compatibility}, for every $s\in G_k$, $v\in V_\ell(B)$, and $\varphi \in E$ we have
$$
\varrho_\lambda^{\alpha_B}(s)(\varphi(v)\otimes 1)= \mu_{s,*}\acc s\varphi_{*}(\acc s v)\otimes \sigma(\alpha_B(s))=\varphi_*\mu_{s,*}(\acc sv)\otimes \sigma(\alpha_B(s))=\varphi\varrho_\lambda^{\alpha_B}(s)(v\otimes 1)\,.
$$
The proof is then a straightforward computation based on \eqref{equation: 1boundaryfin}. Indeed, for every $s,t\in G_k$ and $v \in V_\ell(B)$ we have:
$$
\begin{array}{lll}
\varrho_\lambda^{\alpha_B}(st)(v\otimes 1) & = & \mu_{st,*}({}^{st}v)\otimes \sigma(\alpha_B(st))\\[4pt] 
& = & \mu_{s,*}{}^s\mu_{t,*}({}^{st}v)\cdot c_B(s,t)\otimes \sigma(\alpha_B(st)) \\[4pt]
& = & \mu_{s,*}{}^s\mu_{t,*}({}^{st}v)\otimes \sigma(\alpha_B(s)\alpha_B(t)) \\[4pt]
& = & \varrho_\lambda^{\alpha_B}(s)(\varrho_\lambda^{\alpha_B}(t)(v\otimes 1))\,.
\end{array}
$$
Since it suffices to verify continuity in a neighborhood of the identity, we are reduced to show that $\varrho_\lambda^{\alpha_B}|_{G_K}$ is continuous. But note that the action of $G_K$ via $\varrho_\lambda^{\alpha_B}$ coincides with the natural action of $G_K$ on $V_\lambda(B)$, which is continuous. 
\end{proof}

The map 
$$
E\ra\Hom(B_K,A_K)\,,
$$ 
given by precomposition of maps, equips $\Hom(B_K,A_K)$ with an $E$-module structure, 
which we use to define
$$
V(B,A)=\Hom(B_K,A_K)\otimes_E F\,.
$$
Observe that $V(B,A)$ has dimension 
$$
\dim_{F}(V(B,A))=d\frac{\dim_{\Q} \End(B)}{[E:\Q]}=\frac{dn^2m}{nm}=
dn\,.
$$
We next equip $V(B,A)$ with an action of $\Gal(K/k)$ by means of the following lemma (compare with \cite[Lemma 2.15]{FG20}).

\begin{lemma}
The map
$$
\theta^{\alpha_B}\colon \Gal(K/k)\rightarrow \GL(V(B,A))
$$
defined for $s \in \Gal(K/k)$ as the composition
\begin{equation}\label{equation: comp2}
\theta^{\alpha_B}(s)\colon V(B,A)\xrightarrow{s\otimes 1} V(\acc s B,A)\xrightarrow{(\mu_s^{-1})^*} V(B,A)\xrightarrow{1\otimes \sigma(\alpha_B(s)^{-1})}V(B,A)
\end{equation}
is a representation. Here, $(\mu_s^{-1})^*$ is the map obtained by precomposition with~$\mu_s^{-1}$. 
\end{lemma}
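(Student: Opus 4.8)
The statement asserts that $\theta^{\alpha_B}$ is a representation, i.e., that it is multiplicative (and well-defined, $\Qbar_\ell$-linear or rather $F$-linear on $V(B,A)$); continuity is automatic since $\Gal(K/k)$ is finite. The proof will closely parallel the one just given for $\varrho_\lambda^{\alpha_B}$, with the roles of $\mu_s$ and $\alpha_B(s)$ replaced by $\mu_s^{-1}$ and $\alpha_B(s)^{-1}$ — which is exactly why the two obstruction classes will turn out to be inverses. First I would check that $\theta^{\alpha_B}(s)$ is $F$-linear (equivalently $E$-linear on the $E$-module $\Hom(B_K,A_K)$). For $\psi\in\Hom(B_K,A_K)$ and $\varphi\in E$, recall the $E$-action is by precomposition, so $\varphi\cdot\psi=\psi\circ\varphi$. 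Applying $s\otimes 1$ sends $\psi\circ\varphi$ to ${}^s\psi\circ{}^s\varphi$, and then $(\mu_s^{-1})^*$ precomposes with $\mu_s^{-1}$, giving ${}^s\psi\circ{}^s\varphi\circ\mu_s^{-1}$. By the compatibility \eqref{eq: compatibility}, ${}^s\varphi\circ\mu_s^{-1}=\mu_s^{-1}\circ\varphi$ (inverting both sides of $\mu_s\circ{}^s\varphi=\varphi\circ\mu_s$), so this equals $({}^s\psi\circ\mu_s^{-1})\circ\varphi=\varphi\cdot({}^s\psi\circ\mu_s^{-1})$, i.e., $(\mu_s^{-1})^*$ commutes with the $E$-action; the final scalar twist by $\sigma(\alpha_B(s)^{-1})$ is obviously $E$-linear. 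Hence each $\theta^{\alpha_B}(s)\in\GL(V(B,A))$.

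For multiplicativity, I would compute $\theta^{\alpha_B}(st)$ and $\theta^{\alpha_B}(s)\circ\theta^{\alpha_B}(t)$ on an element $\psi\otimes 1$ with $\psi\in\Hom(B_K,A_K)$, exactly mirroring the display in the previous lemma. On one side, $\theta^{\alpha_B}(st)(\psi\otimes 1)=({}^{st}\psi\circ\mu_{st}^{-1})\otimes\sigma(\alpha_B(st)^{-1})$. On the other side, $\theta^{\alpha_B}(t)(\psi\otimes 1)=({}^t\psi\circ\mu_t^{-1})\otimes\sigma(\alpha_B(t)^{-1})$, and applying $\theta^{\alpha_B}(s)$ gives ${}^s({}^t\psi\circ\mu_t^{-1})\circ\mu_s^{-1}\otimes\sigma(\alpha_B(s)^{-1}\alpha_B(t)^{-1})={}^{st}\psi\circ{}^s\mu_t^{-1}\circ\mu_s^{-1}\otimes\sigma(\alpha_B(s)^{-1}\alpha_B(t)^{-1})$. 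Now the definition $c_B(s,t)=\mu_{st}\circ{}^s\mu_t^{-1}\circ\mu_s^{-1}$ gives ${}^s\mu_t^{-1}\circ\mu_s^{-1}=\mu_{st}^{-1}\circ c_B(s,t)$, so (using that $c_B(s,t)\in M^\times\subseteq E$ is central and $F$-linearity) the second expression becomes ${}^{st}\psi\circ\mu_{st}^{-1}\otimes\sigma(c_B(s,t)\alpha_B(s)^{-1}\alpha_B(t)^{-1})$. By the coboundary relation \eqref{equation: 1boundaryfin}, $\sigma(c_B(s,t)\alpha_B(s)^{-1}\alpha_B(t)^{-1})=\sigma(\alpha_B(st)^{-1})$, so the two sides agree and $\theta^{\alpha_B}(st)=\theta^{\alpha_B}(s)\circ\theta^{\alpha_B}(t)$.

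**Main obstacle.** The only genuinely delicate point is bookkeeping with the $\Gal(K/k)$-action on $\Hom(B_K,A_K)$ and on the isogenies $\mu_s$: one must be careful that ${}^s\mu_t$ really is $\mu$ attached to $st$ up to the cocycle $c_B(s,t)$ with the correct placement of inverses, and that precomposition versus postcomposition is tracked consistently throughout (the $E$-action is by precomposition on $\Hom(B_K,A_K)$ but the analogous action on $V_\lambda(B)$ in the previous lemma is the natural one, so the "inverse" appearing in $\theta^{\alpha_B}$ is forced). Everything else is, as in the preceding lemma, a straightforward computation based on \eqref{eq: compatibility} and \eqref{equation: 1boundaryfin}. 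Since $\Gal(K/k)$ is finite, no continuity statement is needed.
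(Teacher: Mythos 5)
Your proof is correct and follows essentially the same route as the paper's: $F$-linearity via the inverted compatibility relation ${}^s\varphi\circ\mu_s^{-1}=\mu_s^{-1}\circ\varphi$, and multiplicativity by rewriting ${}^s\mu_t^{-1}\circ\mu_s^{-1}$ via the cocycle $c_B(s,t)$ and absorbing it with the coboundary relation \eqref{equation: 1boundaryfin}. Your explicit remark that $c_B(s,t)\in M^\times\subseteq E$ is what lets it pass across the tensor product over $E$ is a point the paper leaves implicit, but the argument is the same.
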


\begin{proof}
We first verify that the action is $F$-linear. For every $s \in \Gal(K/k)$, $\psi\in \Hom(B_K,A_K)$, and $\varphi\in E$ we have
$$
\begin{array}{lll}
\theta^{\alpha_B}(s)(\psi \circ \varphi \otimes 1)&=&\acc s \psi \circ \acc s \varphi \circ \mu_s^{-1}\otimes \sigma(\alpha_B(s)^{-1})\\[4pt]
&=&\acc s \psi \circ \mu_s^{-1}\circ \varphi\otimes \sigma(\alpha_B(s)^{-1})\\[4pt]
&=&\theta^{\alpha_B}(s)(\psi \otimes 1) \circ \varphi\,.
\end{array}
$$
For every $s,t \in G_k$ and $v\in \Hom(B_K,A_K)$, we have that
$$
\begin{array}{lll}
\theta^{c_B}(st)(\psi\otimes 1) & = & (\mu_{st}^{-1})^*({}^{st}v)\otimes \sigma(\alpha_B(st)^{-1})\\[4pt] 
& = & (\acc s\mu_{t}^{-1})^*(\mu_s^{-1})^*({}^{st}v)\cdot c_B(s,t)^{-1}\otimes \sigma(\alpha_B(st)^{-1}) \\[4pt]
& = &  (\acc s\mu_{t}^{-1})^*(\mu_s^{-1})^*({}^{st}v) \otimes \sigma(\alpha_B(s)^{-1}\alpha_B(t)^{-1}) \\[4pt]
& = & \theta^{\alpha_B}(s)(\theta^{\alpha_B}(t)(\psi\otimes 1))\,.
\end{array}
$$
\end{proof}
For a prime $\lambda$ of $F$ above $\ell$, attached to the embedding $\sigma\colon F\hookrightarrow \Qbar_\ell$, define 
$$
V_\lambda(B,A)=V(B,A)\otimes_{F,\sigma}\Qbar_\ell\,.
$$
Let $\theta^{\alpha_B}_\lambda$ denote the representation on $V_\lambda(B,A)$ obtained by letting $\Gal(K/k)$ act trivially on $\Qbar_\ell$ and via $\theta^{\alpha_B}$ on $V(B,A)$. 
Let us write  $V_\lambda(B)^{\alpha_B}$ and $V_\lambda(B,A)^{\alpha_B}$ to denote $V_\lambda(B)$ and $V_\lambda(B,A)$ equipped with actions of $G_k$ via $\varrho_\lambda^{\alpha_B}$ and $\theta_\lambda^{\alpha_B}$, respectively. 

Since $A_\Qbar\sim B^d$, we have that $\End(A_\Qbar)\simeq \M_d(\End(B))$ and therefore the center of $\End(A_\Qbar)$ can be identified with $M$. Since $B$ is a $k$-variety, we have that $A_\Qbar$ is also a $k$-variety \cite[Proposition 3.2]{Gui10}, and this implies that $M\subseteq \End(A)$ \cite[Proposition 3.15]{Gui10}; that is to say, the endomorphisms of the center of $\End(A_\Qbar)$ are defined over $k$. Therefore, $V_\ell(A)$ can be endowed with a $M$-module structure. Define
$$
V_\lambda(A):=V_\ell(A)\otimes_{\Q_\ell\otimes M,\sigma}\Qbar_\ell\,,
$$
where the tensor product is taken with respect to the map obtained from the iclusions $M\subseteq F$ and $\sigma\colon F\hookrightarrow \Qbar_\ell$. We regard $V_\lambda(A)$ as a $\Qbar_\ell[G_k]$-module by letting~$G_{k}$ act naturally on $V_\ell(A)$ and trivially on~$\Qbar_\ell$ (this is well defined because $M \subseteq \End(A)$). Observe that $V_\lambda(A)$ has dimension
$$
\dim_{\Qbar_\ell}(V_\lambda(A))=
\frac{2g}{m} = \dim_{\Qbar_\ell}(V_\lambda(B)^{\alpha_B}) \cdot \dim_{\Qbar_\ell}(V_\lambda(B,A)^{\alpha_B})\,.
$$

\begin{proposition}\label{proposition: decomposition Tate}
There is an isomorphism of $\Qbar_\ell[G_k]$-modules
\begin{equation}\label{equation: tensordec}
V_\lambda(A) \simeq V_\lambda(B)^{\alpha_B} \otimes_{\Qbar_\ell} V_\lambda(B,A)^{\alpha_B}\,. 
\end{equation}
\end{proposition}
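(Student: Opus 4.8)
The plan is to descend to $\sigma$-components the ``tautological'' isomorphism
$$
V_\ell(A)\;\simeq\;\Hom(B_K,A_K)\otimes_{\End(B)}V_\ell(B)\,,\qquad \psi\otimes v\longmapsto \psi_*(v)\,,
$$
which is an isomorphism because $A_{\Qbar}\sim B^d$ makes $\Hom(B_K,A_K)=\Hom(B_\Qbar,A_\Qbar)$ free of rank $d$ as a right $\End(B)$-module. This isomorphism is $G_K$-equivariant, with $G_K$ acting naturally on the Tate modules and trivially on $\Hom(B_K,A_K)$, and it is linear for the action of $\End(A_\Qbar)\otimes\Q_\ell$, hence in particular for that of its center $M\otimes\Q_\ell$. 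Tensoring with $\Qbar_\ell$ over $M\otimes\Q_\ell$ along $\sigma$ and using that $M$ is central in $\End(B)$, one obtains
$$
V_\lambda(A)\;\simeq\;\Hom(B_K,A_K)_\sigma\otimes_{\End(B)_\sigma}V_\ell(B)_\sigma\,,
$$
where the subscript $\sigma$ denotes $-\otimes_{M\otimes\Q_\ell,\sigma}\Qbar_\ell$, so that $\End(B)_\sigma=\End(B)\otimes_M\Qbar_\ell$.

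The next step is to interpret the right-hand side via the maximal subfield $E$. Since $E$ is a maximal subfield of the division algebra $\End(B)$ it is a splitting field, so $\End(B)\otimes_M E\simeq M_n(E)$ and therefore $\End(B)_\sigma\simeq M_n(\Qbar_\ell)$; inside the commutative semisimple subalgebra $E\otimes_M\Qbar_\ell$ of $\End(B)_\sigma$ there is a primitive idempotent $e_0$ corresponding to the factor cut out by $\sigma|_E$. Unwinding the definitions shows that cutting by $e_0$ recovers precisely the two objects in the statement, as $\Qbar_\ell$-vector spaces: $e_0\cdot V_\ell(B)_\sigma=V_\lambda(B)$ and $\Hom(B_K,A_K)_\sigma\cdot e_0=V_\lambda(B,A)$, and it is the same $e_0$ in both cases. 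Morita equivalence for $M_n(\Qbar_\ell)$ then gives a canonical isomorphism
$$
\Hom(B_K,A_K)_\sigma\otimes_{\End(B)_\sigma}V_\ell(B)_\sigma\;\simeq\;\big(\Hom(B_K,A_K)_\sigma\cdot e_0\big)\otimes_{\Qbar_\ell}\big(e_0\cdot V_\ell(B)_\sigma\big)\;=\;V_\lambda(B,A)\otimes_{\Qbar_\ell}V_\lambda(B)\,,
$$
which establishes the decomposition \eqref{equation: tensordec} at the level of $\Qbar_\ell$-vector spaces.

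It remains to match the Galois actions, and here is where the cancellation of cocycles enters. Transporting the natural $G_k$-action on $V_\ell(A)$ through the tautological isomorphism, one computes that $s\in G_k$ sends $\psi\otimes v$ to $\vartheta_0(s)(\psi)\otimes\varrho_0(s)(v)$, where $\vartheta_0(s)\colon\psi\mapsto \acc s\psi\circ\mu_s^{-1}$ and $\varrho_0(s)\colon v\mapsto \mu_{s,*}(\acc s v)$: this is the identity $s\cdot(\psi_*(v))=(\acc s\psi)_*(\acc s v)=(\acc s\psi\circ\mu_s^{-1})_*\big(\mu_{s,*}(\acc s v)\big)$, and the compatibility \eqref{eq: compatibility} of the $\mu_s$ with $\End(B)$ ensures that $\vartheta_0(s)$ is right $\End(B)$-linear while $\varrho_0(s)$ is left $\End(B)$-linear; thus $\vartheta_0(s)\otimes\varrho_0(s)$ is a well-defined operator on $\Hom(B_K,A_K)_\sigma\otimes_{\End(B)_\sigma}V_\ell(B)_\sigma$ which corresponds under the Morita isomorphism to the tensor product of the operators it induces on the two $e_0$-parts. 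On the other hand, comparing with \eqref{equation: comp1} and \eqref{equation: comp2}, the representation $\varrho_\lambda^{\alpha_B}(s)$ is $\sigma(\alpha_B(s))$ times the operator induced by $\varrho_0(s)$ on $V_\lambda(B)$, while $\theta^{\alpha_B}_\lambda(s)$ is $\sigma(\alpha_B(s)^{-1})$ times the operator induced by $\vartheta_0(s)$ on $V_\lambda(B,A)$. Hence on the tensor product the scalars $\sigma(\alpha_B(s))$ and $\sigma(\alpha_B(s)^{-1})$ cancel, $\theta^{\alpha_B}_\lambda(s)\otimes\varrho^{\alpha_B}_\lambda(s)$ coincides with the operator transported from $V_\lambda(A)$, and the isomorphism is one of $\Qbar_\ell[G_k]$-modules.

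I expect the main obstacle to be the bookkeeping in the middle step: keeping straight the interaction between the $\sigma$-base change and the noncommutative structure of $\End(B)$, and in particular verifying that the idempotent cutting $V_\lambda(B)$ out of $V_\ell(B)_\sigma$ is the same one cutting $V_\lambda(B,A)$ out of $\Hom(B_K,A_K)_\sigma$. Once this is set up correctly, the cancellation of the two cocycles — visible above as the cancellation of the scalars $\sigma(\alpha_B(s))^{\pm 1}$ — is immediate.
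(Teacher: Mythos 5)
Your argument is correct, but it takes a genuinely different route from the paper's. The paper does not construct the isomorphism explicitly: it invokes Faltings's theorem to get semisimplicity of $V_\lambda(A)$, reduces by a dimension count to exhibiting a single nonzero $G_k$-equivariant map between the two sides, and produces that map by checking (via the identity $c_B(s,s^{-1})=\alpha_B(s)\alpha_B(s^{-1})$) that $f\mapsto f_*$ is equivariant from $V_\lambda(B,A)^{\alpha_B}$ to $\Hom_{G_k}(V_\lambda(B)^{\alpha_B},V_\lambda(A))$ --- the same cancellation of $\alpha_B^{\pm1}$ that you exploit, packaged through an adjunction rather than through the tensor product itself. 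You instead build the isomorphism canonically from the tautological identification $V_\ell(A)\simeq\Hom(B_K,A_K)\otimes_{\End(B)}V_\ell(B)$, followed by $\sigma$-base change and Morita reduction at a rank-one idempotent of $\End(B)\otimes_M\Qbar_\ell\simeq \M_n(\Qbar_\ell)$. What your approach buys: no appeal to Faltings or to irreducibility (the paper's proof handles the reducible case only by decomposing $V_\lambda(B,A)$ ``accordingly''), a canonical rather than abstract isomorphism, and a conceptual explanation of why $V(B,A)$ is formed as a tensor product over the maximal subfield $E$. The cost is the noncommutative bookkeeping you flag, all of which does check out: \eqref{eq: compatibility} makes $\vartheta_0(s)$ right $\End(B)$-linear and $\varrho_0(s)$ left $\End(B)$-linear, so $\vartheta_0(s)\otimes\varrho_0(s)$ descends to the tensor product over $\End(B)$; both operators commute with $E$, hence after base change with $E\otimes_M\Qbar_\ell$, so they preserve the $e_0$-components and the Morita isomorphism intertwines them with the tensor product of their restrictions; and the primitive idempotents of the maximal \'etale subalgebra $E\otimes_M\Qbar_\ell\subseteq\M_n(\Qbar_\ell)$ do have rank one, which is exactly what makes $e_0$ the same idempotent cutting out $V_\lambda(B)$ from $V_\ell(B)_\sigma$ and $V_\lambda(B,A)$ from $\Hom(B_K,A_K)_\sigma$.
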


\begin{proof}
The module $V_\lambda(A)$ is semisimple by Faltings's theorem. Let us first assume that $V_\lambda(A)$ is an irreducible $\Qbar_\ell[G_k]$-module.
Since both $V_\lambda(A)$ and $V_\lambda(B)\otimes V_\lambda(B,A)$ have the same dimension over $\Qbar_\ell$, it will suffice to show that 
$$
W:=\Hom_{G_k}(V_\lambda(A), V_\lambda(B)^{\alpha_B}\otimes V_\lambda(B,A)^{\alpha_B})\not=0.
$$
Observe that
$$
\begin{array}{lll}
W &=&\Hom_{G_k}(V_\lambda(A) \otimes (V_\lambda(B)^{\alpha_B})^\vee, V_\lambda(B,A)^{\alpha_B})\\[6pt]
& = & \Hom_{G_k}(\Hom_{G_k}(V_\lambda(B)^{\alpha_B},V_\lambda(A)), V_\lambda(B,A)^{\alpha_B})\,.
\end{array}
$$
Thus, to show that $W\not=0$, it is enough to show that the map
$$
\Phi\colon V_\lambda(B,A)^{\alpha_B}\rightarrow \Hom_{G_k}(V_\lambda(B)^{\alpha_B},V_\lambda(A))\,,\qquad \Phi(f):=f_*
$$
is $G_k$-equivariant. But this indeed holds: 
$$
\begin{array}{lll}
 \Phi(\theta_\lambda^{\alpha_B}(s)(f))& = & (\acc s f_{*} \circ \mu_{s,*}^{-1})\otimes \sigma(\alpha(s)^{-1})\\[6pt]
&= & \acc s (f_*\circ s^{-1}\circ \mu_{s,*}^{-1})\otimes \sigma(\alpha(s)^{-1}) \\[6pt]
&= & \acc s (f_* \circ \mu_{s^{-1},*})\otimes \sigma(c_B(s,s^{-1})\cdot\alpha(s)^{-1}) \\[6pt] 
&= & \acc s (\Phi(f)\circ \varrho_\lambda^{\alpha_B}(s)^{-1})\,, 
\end{array}
$$
where we have used that $c_B(s,s^{-1})=\acc s\mu_{s^{-1}}^{-1} \mu_{s}^{-1}$ and $c_B(s,s^{-1})=\alpha(s)\alpha(s^{-1})$.
To conclude, note that if $V_\lambda(A)$ decomposes, then $V_\lambda(B,A)$ does it accordingly, and we can apply the above argument to each of the respective irreducible constituents. 
\end{proof}

Note that the proposition implies, in particular, that $V_\lambda(B)^{\alpha_B}\otimes_{\Qbar_\ell} V_{\lambda}(B,A)^{\alpha_B}$ only depends on the restriction of $\sigma\colon F\rightarrow \Qbar_\ell$ to $M$. Let $\sigma_i\colon F \rightarrow \Qbar_\ell$, for $i=1,\dots,m$, denote extensions to $F$ of the distinct embeddings of $M$ into $\Qbar_\ell$. Let $\lambda_i$ denote the prime of $F$ attached to $\sigma_i$.

\begin{proposition}\label{proposition: tdt}
There is an isomorphism of $\Qbar_\ell[G_k]$-modules
$$
V_\ell(A)\otimes_{\Q_\ell} \Qbar_\ell \simeq \bigoplus_{i=1}^{m} V_{\lambda_i}(B)^{\alpha_B}\otimes_{\Qbar_\ell} V_{\lambda_i}(B,A)^{\alpha_B}\,.
$$
\end{proposition}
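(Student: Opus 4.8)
The plan is to obtain the statement by decomposing the scalar extension $V_\ell(A)\otimes_{\Q_\ell}\Qbar_\ell$ along the embeddings of $M$ into $\Qbar_\ell$ and then applying Proposition~\ref{proposition: decomposition Tate} summand by summand; the argument is essentially formal, the content having already been absorbed into that proposition.

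First I would use that $M\subseteq\End(A)$, so that $V_\ell(A)$ is a module over $M\otimes_{\Q}\Q_\ell$ on which $G_k$ acts by $(M\otimes_{\Q}\Q_\ell)$-linear automorphisms. Since $M/\Q$ is finite separable of degree $m$, we have $M\otimes_{\Q}\Qbar_\ell\simeq\prod_{\tau}\Qbar_\ell$, the product running over the $m$ distinct embeddings $\tau\colon M\hookrightarrow\Qbar_\ell$; the associated idempotents are fixed by $G_k$ because the action is $M$-linear and trivial on $\Qbar_\ell$. Tensoring $V_\ell(A)$ up to $\Qbar_\ell$ therefore yields an isomorphism of $\Qbar_\ell[G_k]$-modules
$$
V_\ell(A)\otimes_{\Q_\ell}\Qbar_\ell\;\simeq\;\bigoplus_{\tau\colon M\hookrightarrow\Qbar_\ell}\bigl(V_\ell(A)\otimes_{M\otimes\Q_\ell,\tau}\Qbar_\ell\bigr)\,.
$$
By construction the restrictions $\sigma_i|_M$, for $i=1,\dots,m$, are precisely the $m$ maps $\tau$, and by the remark following Proposition~\ref{proposition: decomposition Tate} the summand attached to $\tau=\sigma_i|_M$ depends only on $\sigma_i|_M$; hence it may be identified with $V_{\lambda_i}(A)$.

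It then remains to invoke Proposition~\ref{proposition: decomposition Tate}, which supplies an isomorphism $V_{\lambda_i}(A)\simeq V_{\lambda_i}(B)^{\alpha_B}\otimes_{\Qbar_\ell}V_{\lambda_i}(B,A)^{\alpha_B}$ of $\Qbar_\ell[G_k]$-modules for each $i$, and to assemble these into a single one. As a consistency check one has the dimension count $\dim_{\Qbar_\ell}\bigl(V_\ell(A)\otimes_{\Q_\ell}\Qbar_\ell\bigr)=2g=\sum_{i=1}^m\dim_{\Qbar_\ell}V_{\lambda_i}(B)^{\alpha_B}\cdot\dim_{\Qbar_\ell}V_{\lambda_i}(B,A)^{\alpha_B}$, already recorded above. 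The only point requiring a moment's care is the $G_k$-equivariance of the decomposition of $M\otimes_{\Q}\Qbar_\ell$, i.e.\ that separating $V_\ell(A)$ along the embeddings of $M$ commutes with the Galois action; this is immediate from $M\subseteq\End(A)$, and is exactly what legitimizes the reduction, while everything else is bookkeeping.
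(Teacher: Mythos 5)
Your proposal is correct and follows the paper's proof exactly: the paper also deduces the statement from the decomposition $V_\ell(A)\otimes_{\Q_\ell}\Qbar_\ell\simeq\bigoplus_{i=1}^{m}V_{\lambda_i}(A)$ along the embeddings of $M$, combined with Proposition~\ref{proposition: decomposition Tate} applied to each summand. Your additional remarks on the $G_k$-equivariance of the idempotent decomposition of $M\otimes_{\Q}\Qbar_\ell$ simply make explicit what the paper leaves implicit.
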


\begin{proof}
This follows from the isomorphism
\begin{equation}\label{equation: decemb}
V_\ell(A)\otimes_{\Q_\ell} \Qbar_\ell\simeq \bigoplus_{i=1}^{m} V_{\lambda_i}(A)\,,
\end{equation}
together with Proposition \ref{proposition: decomposition Tate}.
\end{proof}

\begin{proposition}\label{proposition: noiso}
For $i\not=j$, we have that
$$
V_{\lambda_i}(B)^{\alpha_B}\not \simeq V_{\lambda_j}(B)^{\alpha_B}
$$
as $\Qbar_\ell[G_{K'}]$-modules for any finite extension $K'/k$.
\end{proposition}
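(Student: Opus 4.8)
The plan is to reduce to the case where $K'$ contains the field $K$, where the twisted action defining $V_{\lambda_i}(B)^{\alpha_B}$ is simply the natural Galois action, and then to read off the isotypic decomposition of $V_\ell(B)\otimes\Qbar_\ell$ from Faltings's theorem.

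First I would note that it is enough to treat the case $K\subseteq K'$: replacing $K'$ by the compositum $K'K$, still a finite extension of $k$, any isomorphism of $\Qbar_\ell[G_{K'}]$-modules restricts to one of $\Qbar_\ell[G_{K'K}]$-modules. So assume $K\subseteq K'$. Then $G_{K'}\subseteq G_K$, and since $\alpha_B$ is trivial on $G_K$ and $\mu_s=\mathrm{id}$ for $s\in G_K$, the restriction of $\varrho_{\lambda_i}^{\alpha_B}$ to $G_{K'}$ is exactly the natural action of $G_{K'}$ on $V_{\lambda_i}(B)=V_\ell(B)\otimes_{E\otimes\Q_\ell,\sigma_i}\Qbar_\ell$, as already observed in the proof of the lemma defining $\varrho_\lambda^{\alpha_B}$. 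It therefore suffices to show that $V_{\lambda_i}(B)\not\cong V_{\lambda_j}(B)$ as $\Qbar_\ell[G_{K'}]$-modules with the natural actions. Note also that $\End(B_{K'})=\End(B)$ because $K\subseteq K'$, so the $E$- and $M$-module structures on $V_\ell(B)$ commute with the $G_{K'}$-action.

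The key input is Faltings's theorem: $V_\ell(B)$ is a semisimple $\Q_\ell[G_{K'}]$-module with $\End_{G_{K'}}(V_\ell(B))=\End(B)\otimes_\Q\Q_\ell$. As $\End(B)$ is central simple of degree $n^2$ over $M$, extending scalars to the algebraically closed field $\Qbar_\ell$ yields
$$
\End_{G_{K'}}\big(V_\ell(B)\otimes\Qbar_\ell\big)\cong\End(B)\otimes_\Q\Qbar_\ell\cong\bigoplus_{\phi\colon M\hookrightarrow\Qbar_\ell}\mathrm{M}_n(\Qbar_\ell),
$$
a product of exactly $m$ matrix blocks. Hence $V_\ell(B)\otimes\Qbar_\ell=\bigoplus_\phi S_\phi^{\oplus n}$, where the $S_\phi$ are pairwise non-isomorphic irreducible $\Qbar_\ell[G_{K'}]$-modules indexed by the embeddings $\phi$ of $M$, and the $\phi$-isotypic summand is precisely $U_\phi:=V_\ell(B)\otimes_{M\otimes\Q_\ell,\phi}\Qbar_\ell$ (the image of the central idempotent of $\End(B)\otimes\Qbar_\ell$ attached to $\phi$, which commutes with $G_{K'}$). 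Since $M\subseteq E$, the decomposition of $V_\ell(B)\otimes\Qbar_\ell$ indexed by the embeddings of $E$ is a refinement of the one indexed by the embeddings of $M$; concretely $U_\phi=\bigoplus_{\tau|_M=\phi}\big(V_\ell(B)\otimes_{E\otimes\Q_\ell,\tau}\Qbar_\ell\big)$. Therefore $V_{\lambda_i}(B)$ is a $\Qbar_\ell[G_{K'}]$-direct summand of $U_{\sigma_i|_M}=S_{\sigma_i|_M}^{\oplus n}$; it is nonzero, as its $\Qbar_\ell$-dimension $2g/(dnm)$ is positive, so it is $S_{\sigma_i|_M}$-isotypic and nonzero (a dimension count in fact shows $V_{\lambda_i}(B)\cong S_{\sigma_i|_M}$).

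To conclude: for $i\neq j$ the embeddings $\sigma_i|_M$ and $\sigma_j|_M$ are distinct by the choice of $\sigma_1,\dots,\sigma_m$, so $S_{\sigma_i|_M}\not\cong S_{\sigma_j|_M}$; hence the nonzero modules $V_{\lambda_i}(B)$ and $V_{\lambda_j}(B)$ share no irreducible constituent and cannot be isomorphic as $\Qbar_\ell[G_{K'}]$-modules. I expect the only point that needs genuine care to be the bookkeeping of the first paragraph together with the identification there of $\varrho_{\lambda_i}^{\alpha_B}|_{G_{K'}}$ with the natural Galois action; once this is in place, the statement is a formal consequence of Faltings's description of the commutant.
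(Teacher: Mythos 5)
Your argument is correct. It rests on the same two pillars as the paper's proof---reduce to the case $K\subseteq K'$, then invoke Faltings's isogeny theorem to control the commutant of the Galois action---but you apply Faltings to $B_{K'}$ rather than to $A_{K'}$. The paper instead restricts the decomposition $V_\ell(A)\otimes\Qbar_\ell\simeq\bigoplus_i V_{\lambda_i}(B)^{\alpha_B}\otimes V_{\lambda_i}(B,A)^{\alpha_B}$ to $G_{K'}$ (where the Artin factors become trivial of dimension $nd$), compares $\End_{G_{K'}}(V_\ell(A)\otimes\Qbar_\ell)\simeq \M_{nd}\bigl(\End(\bigoplus_i V_{\lambda_i}(B)^{\alpha_B})\bigr)$ with $\End(A_{K'})\otimes\Qbar_\ell\simeq \M_{nd}(M\otimes\Qbar_\ell)$, and concludes from $\dim_{\Qbar_\ell}\End(\bigoplus_i V_{\lambda_i}(B)^{\alpha_B})=m$ that the $m$ summands are pairwise non-isomorphic (and irreducible with scalar endomorphisms). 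Your route is more self-contained: it does not pass through Proposition~\ref{proposition: tdt}, and by identifying $\End(B)\otimes\Qbar_\ell\simeq\bigoplus_\phi\M_n(\Qbar_\ell)$ you obtain the finer structural statement that the $V_{\lambda_i}(B)$ are exactly the irreducible isotypic constituents of $V_\ell(B)\otimes\Qbar_\ell$ attached to the distinct embeddings of $M$, rather than deducing non-isomorphism from a dimension count alone. What the paper's version buys in exchange is that it works entirely with objects already assembled for the main theorem ($V_\ell(A)$ and its tensor decomposition), so no separate bookkeeping of the $E$- versus $M$-isotypic refinement is needed. Both arguments are valid; yours is, if anything, slightly more informative.
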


\begin{proof}
Without loss of generality we may assume that $K\subseteq K'$. On the one hand, we then have
$$
\End_{G_{K'}}(V_\ell(A)\otimes \Qbar_\ell)\simeq \M_{nd}(\End(\bigoplus_{i}V_{\lambda_i}(B)^{\alpha_B}))\,.
$$
On the other hand, we have
$$
\End(A_{K'})\otimes \Qbar_\ell\simeq \M_{d}(\End(B)\otimes \Qbar_\ell)\simeq \M_{nd}(M\otimes \Qbar_\ell)\,. 
$$
By Faltings's isogeny theorem \cite{Fal83}, we have that $\dim_{\Qbar_\ell}(\End(\oplus_{i}V_{\lambda_i}(B)^{\alpha_B}))=m$, and the proposition follows.
\end{proof}

So far, the subextension $k/k_0$ of $K/k_0$ has only been subject to the constraint  that $B$ be a $k$-variety. We now make a specific choice of $k/k_0$ that allows for a particularly nice description of the Tate module of $A_0$ in terms of that of $A=A_0\times_{k_0}k$.

\begin{theorem}\label{theorem: inddec}
Let $A_0$ be a simple abelian variety defined over $k_0$ satisfying Hypothesis~\ref{hypothesis: main}. Let $M_0=M\cap \End(A_0)$. Then $M/M_0$ is Galois and there exists a Galois subextension $k/k_0$ of $K_0/k_0$ of degree $[M:M_0]$ such that for $A=A_0\times_{k_0} k$ the following properties hold:
\begin{enumerate}[i)]
\item $M\subseteq \End(A)$.
\item $B$ is a $k$-variety. 
\item For every rational prime $\ell$, we have
$$
V_\ell(A_0)\otimes_{\Q_\ell} \Qbar_\ell\simeq \bigoplus_{\lambda}\Ind^{k}_{k_0}\left( V_\lambda(B)^{\alpha_B} \otimes_{\Qbar_\ell} V_{\lambda}(A,B)^{\alpha_B}\right)\,,
$$
where the sum runs over the primes $\lambda=\lambda(\sigma)$ of $F$ lying over $\ell$ attached to extensions $\sigma\colon F\rightarrow \Qbar_\ell$ of the $[M_0:\Q]$ distinct embeddings of $M_0$ into $\Qbar_\ell$.
\end{enumerate}
\end{theorem}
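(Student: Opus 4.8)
The plan is to first single out the field $k$ group-theoretically, then check i) and ii), and finally deduce iii) from Proposition~\ref{proposition: decomposition Tate} by a Galois-descent argument. The action of $\Gal(K_0/k_0)$ on $\End(A_{0,\Qbar})$ by conjugation of endomorphisms preserves the center $M$; a class in $M$ lies in $\End(A_0)$ precisely when it is fixed, so the fixed field is $M_0=M\cap\End(A_0)$ and $M/M_0$ is Galois. I would let $H\trianglelefteq\Gal(K_0/k_0)$ be the kernel of the resulting surjection $\Gal(K_0/k_0)\twoheadrightarrow\Gal(M/M_0)$, and set $k:=K_0^{H}$; then $k/k_0$ is a Galois subextension of $K_0/k_0$ with $\Gal(k/k_0)\simeq\Gal(M/M_0)$, in particular $[k\colon k_0]=[M\colon M_0]$.

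Part i) is then immediate: $\End(A)=\End(A_{0,\Qbar})^{\Gal(K_0/k)}=\End(A_{0,\Qbar})^{H}$, and $H$ acts trivially on $M$ by construction, so $M\subseteq\End(A)$. For ii), fix $s\in\Gal(K/k)$; since ${}^s B$ and $B$ are both $\Qbar$-simple factors of $A_{0,\Qbar}={}^s(A_{0,\Qbar})$, they are $\Qbar$-isogenous, so I pick any isogeny $\nu_s\colon{}^s B\to B$. The map $\varphi\mapsto\nu_s\circ{}^{s}\varphi\circ\nu_s^{-1}$ is a $\Q$-algebra automorphism of $\End(B)$ whose restriction to the center $M$ is independent of the choice of $\nu_s$ and coincides with the action of $s|_{K_0}\in H$ on $M$, hence is trivial. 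By the Skolem--Noether theorem this automorphism of $\End(B)$ is inner, say conjugation by $u\in\End(B)^{\times}$; then $\mu_s:=u^{-1}\nu_s$ is compatible with $\End(B)$ in the sense of \eqref{eq: compatibility}, and $B$ is a $k$-variety. (Alternatively, ii) can be derived from i) using \cite{Gui10}.)

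For iii): since $M_0\subseteq\End(A_0)$, the $\Qbar_\ell[G_{k_0}]$-module $V_\ell(A_0)\otimes_{\Q_\ell}\Qbar_\ell$ has a commuting $M_0$-action and splits as $\bigoplus_{\tau}V_\tau(A_0)$ over the $[M_0\colon\Q]$ embeddings $\tau\colon M_0\hookrightarrow\Qbar_\ell$, with $V_\tau(A_0)=V_\ell(A_0)\otimes_{M_0\otimes\Q_\ell,\tau}\Qbar_\ell$ a $G_{k_0}$-submodule of $\Qbar_\ell$-dimension $2g/[M_0\colon\Q]$. It suffices to show that for each $\tau$, and any extension $\sigma\colon F\hookrightarrow\Qbar_\ell$ of $\tau$ with attached prime $\lambda$, there is an isomorphism $V_\tau(A_0)\simeq\Ind^{k}_{k_0}\bigl(V_\lambda(B)^{\alpha_B}\otimes_{\Qbar_\ell}V_\lambda(B,A)^{\alpha_B}\bigr)$ of $\Qbar_\ell[G_{k_0}]$-modules. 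Restricting to $G_k$ and refining the $M_0$-action to the $M$-action from part i), one gets $V_\tau(A_0)|_{G_k}=\bigoplus_{\sigma_i|_{M_0}=\tau}V_{\lambda_i}(A)$ in the notation preceding Proposition~\ref{proposition: tdt}, a direct sum of $[M\colon M_0]$ distinct $G_k$-submodules. Now $G_{k_0}$ conjugates the $M$-action on $V_\ell(A)$ through $\Gal(K_0/k_0)\twoheadrightarrow\Gal(M/M_0)\simeq\Gal(k/k_0)$, and $\Gal(M/M_0)$ acts simply transitively on the embeddings of $M$ extending $\tau$; hence $G_{k_0}$ permutes simply transitively the summands $V_{\lambda_i}(A)$. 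Fixing the one, call it $W$, attached to $\sigma|_M$, the natural $G_{k_0}$-equivariant map $\Ind^{k}_{k_0}W\to V_\tau(A_0)$, $g\otimes w\mapsto g\cdot w$, is surjective between $\Qbar_\ell$-spaces of equal dimension $[M\colon M_0]\cdot(2g/m)=2g/[M_0\colon\Q]$, hence an isomorphism; and $W\simeq V_\lambda(B)^{\alpha_B}\otimes_{\Qbar_\ell}V_\lambda(B,A)^{\alpha_B}$ by Proposition~\ref{proposition: decomposition Tate}. Summing over $\tau$ gives iii).

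I expect the last step — showing that $V_\tau(A_0)$, as a $\Qbar_\ell[G_{k_0}]$-module, is induced from its $G_k$-submodule $W$ — to be the main point demanding care: it amounts to computing how $G_{k_0}$ acts on the $M$-eigenspaces of $V_\ell(A)\otimes\Qbar_\ell$ and verifying the required simple transitivity, i.e.\ that these eigenspaces realize the Mackey pattern of an induced representation. The verification that $B$ is a $k$-variety for this particular choice of $k$ (the Skolem--Noether argument, or the appeal to \cite{Gui10}) is the other delicate point.
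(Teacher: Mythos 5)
Your proposal is correct, and its skeleton coincides with the paper's: descend the center $M$ to a degree-$[M:M_0]$ Galois subextension $k/k_0$, upgrade $M$-equivariant isogenies ${}^sB\to B$ to $\End(B)$-equivariant ones by Skolem--Noether (your part ii) is the same adjustment the paper performs), and then feed Proposition~\ref{proposition: decomposition Tate} into an induction from $k$ to $k_0$. The genuine difference is in how the two endpoints are treated. The paper obtains both the field $k$ and the isomorphism $\bigoplus_{i=1}^{[M:M_0]}V_\ell(A_0)\simeq\Ind^k_{k_0}V_\ell(A)$ by citing Milne \cite{Mil72}, then applies Proposition~\ref{proposition: tdt} to $V_\ell(A)$, groups the $m$ induced summands according to $\sigma_i|_{M_0}$ via the trace identity $\Tr\Ind^k_{k_0}(V_\lambda(A))(s)=\Tr_{\Q_\ell\otimes\sigma(M)/\Q_\ell\otimes\sigma(M_0)}\Tr(V_\lambda(A))(s)$, and finally cancels the multiplicity $[M:M_0]$ on both sides. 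You instead construct $k=K_0^{H}$ explicitly (which is exactly what Milne's remark does under the hood), decompose $V_\ell(A_0)\otimes\Qbar_\ell$ into $M_0$-eigenspaces $V_\tau(A_0)$, and show each is induced from a single $M$-eigenspace $V_\lambda(A)$ of $V_\ell(A)\otimes\Qbar_\ell$ by exhibiting the simply transitive permutation of the $M$-eigenspaces over a fixed $\tau$ under $\Gal(k/k_0)\simeq\Gal(M/M_0)$, plus a dimension count. Your route is more self-contained and makes the Mackey pattern explicit, avoiding both the external citation and the implicit appeal to semisimplicity needed to divide by the multiplicity; the paper's is shorter. The one point to state carefully in your version of ii) is the identification of the outer action of $\nu_s$ on $Z(\End(B))$ with the Galois action of $s$ on $M\subseteq Z(\End(A_{0,\Qbar}))$ under $\End(A_{0,\Qbar})\simeq\M_d(\End(B))$ — this is the standard $k$-variety fact from \cite{Gui10} that the paper also relies on, so the appeal is legitimate.
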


\begin{proof}
The existence of a Galois subextension $k/k_0$ of $K_0/k_0$ of degree $[M:M_0]$ such that $M\subseteq \End(A)$ and 
$$
\bigoplus_{i=1}^{[M:M_0]} V_\ell(A_0) \simeq \Ind^{k}_{k_0}\left( V_\ell(A)\right)
$$
is \cite[Rem. 2, p. 186]{Mil72}. As it is seen in the proof, there is an injection $\Gal(k/k_0)\hookrightarrow \Aut_{M_0}(M)$, which ensures that $M/M_0$ is Galois. The fact that $M\subseteq \End(A)$ implies that for every $s \in \Gal(K/k)$ we can fix an $M$-equivariant isogeny $\mu_s \colon \acc s B \rightarrow B$ coming from the $M$-equivariant isogeny
$$
\acc s B^d\sim \acc s A_{K} = A_ {K}\sim B^d\,.
$$
The $M$-equivariant system of isogenies $\{\mu_s\}_{s \in G_{k}}$ can be modified into a $\End(B)$-equivariant system $\{\lambda_s\}_{s \in G_{k}}$, so that $B$ becomes a $k$-variety. Indeed, consider the $M$-algebra isomorphism
$$
\End(B)\rightarrow \End(B)\,,\qquad \varphi\mapsto \mu_{s}\circ \acc s \varphi \circ \mu_s^{-1}\,.
$$
The Skolem--Noether theorem shows the existence of an element $\psi\in \End(B)^\times$ such that $\mu_{s}\circ \acc s \varphi \circ \mu_ {s}^{-1}=\psi\circ \varphi \circ \psi^{-1}$. Then define $\lambda_s=\psi^{-1}\circ \mu_{s}$. The theorem then follows by applying Proposition~\ref{proposition: tdt} to $V_\ell(A)$ and using that 
$$
\Ind^{k}_{k_0}(V_\lambda(B)^{\alpha_B}\otimes V_\lambda(B,A)^{\alpha_B})\simeq \Ind^{k}_{k_0}(V_{\lambda'}(B)^{\alpha_B}\otimes V_{\lambda'}(B,A)^{\alpha_B})
$$
if $\lambda=\lambda(\sigma)$, $\lambda'=\lambda'(\sigma')$, and $\sigma$ and $\sigma'$ coincide on $M_0$. The latter follows from Lemma \ref{lemma: inductiontrace} below.
\end{proof}

\begin{lemma}\label{lemma: inductiontrace}
Let $\lambda$ be a prime of $F$ attached to the embedding $\sigma\colon F\rightarrow \Qbar_\ell$. For $s \in G_k$ we have  
$$ 
\Tr\Ind^k_{k_0}(V_\lambda(A))(s)=\Tr_{\Q_\ell\otimes \sigma(M)/\Q_\ell\otimes \sigma(M_0)}\Tr(V_\lambda(A)(s))\,.
$$
\end{lemma}

\begin{proof}
Let $\varrho_{\lambda_i}^A$ (resp. $\varrho_\ell$) be the representation afforded by $V_{\lambda_i}(A)$ (by $V_\ell(A_0)$). For $t\in G_{k_0}$ and $s\in G_k$, define
$$
\varrho_{\lambda_j}^{A,t}(s):=\varrho_{\lambda_j}^A(tst^{-1})\,,\qquad \varrho_{\ell}^{t}(s):=\varrho_{\ell}(tst^{-1})\,.
$$
Note that $\varrho_\ell^t\simeq \varrho_\ell$ and that the isomorphism class of $\varrho_{\lambda_j}^{A,t}$ only depends on the projection of $t$ into $\Gal(k/k_0)$. If $\sigma_i=\sigma_j\circ t$, where $t$ is regarded as an element of $\Gal(M/M_0)$ via the isomorphism $\Gal(k/k_0)\simeq \Gal(M/M_0)$, then by transport of structure we have
\begin{equation}\label{equation: transportofstructure}
\varrho_{\lambda_j}^{A,t}=\varrho_\ell^t\otimes_{t^{-1}(M),\sigma_i}\Qbar_\ell\simeq \varrho_\ell\otimes_{M,\sigma_i}\Qbar_\ell= \varrho_{\lambda_i}^{A}\,.
\end{equation}
Let $T$ denote a system of representatives of $\Gal(k/k_0)$. Then
$$
\Tr\Ind^k_{k_0}(\varrho_\lambda^A)(s)=\sum_{t\in T}\Tr\varrho_{\lambda}^{A,t}(s)=\Tr_{\Q_\ell\otimes \sigma(M)/\Q_\ell\otimes \sigma(M_0)}\Tr \varrho_\lambda^A(s)\,,
$$
where we used \eqref{equation: transportofstructure} in the last equality.
\end{proof}

\begin{remark}
We will be later interested in the case that $k_0$ is totally real. Note that if $[M:M_0]$ is odd, then the injection $\Gal(k/k_0)\hookrightarrow \Aut_{M_0}(M)$ forces $k$ to be totally real as well. In the case that $k_0=\Q$ and $\Aut_{M_0}(M)$ has a single element of order $2$, then $k$ is either totally real or CM (this follows from the fact that all complex conjugations of $\Gal(k/\Q)$ are conjugate).  
\end{remark}

\begin{remark}\label{remark: instance}
Let us review a particular case of Proposition~\ref{proposition: decomposition Tate} implicit in \cite{FG18}. Suppose that $A$ is $\Qbar$-isogenous to the $g$-th power of a non-CM elliptic curve $B$ and that $g$ is odd. Then, by \cite[Theorem~2.21]{FG18}, the cohomology class $\gamma_B$ of $c_B$ in $H^2(G_k,\Q^\times)$ is trivial. By Weil's descent criterion, if $\gamma_B$ is trivial, then $B$ admits a model $B^*$ up to isogeny defined over $k$. If $L^*/k$ denotes the minimal extension such that $\Hom(B_{L^*},A_{L^*})\simeq \Hom(B_\Qbar,A_\Qbar)$, then by \cite[Thm. 3.1]{Fit13} one has that
$$
V_\ell(A)\simeq V_\ell(B^*) \otimes_{\Q_\ell} \Hom(B^*_{L^*},A_{L^*})\,,  
$$
which may be regarded as a particular instance of Proposition~\ref{proposition: decomposition Tate}.
\end{remark}

\section{The weakly compatible system $V_\lambda(B)$}\label{section: GL2}

Let $A_0$ be an abelian variety defined over $k_0$ satisfying Hypothesis \ref{hypothesis: main}. Let $k/k_0$ be as in Theorem~\ref{theorem: inddec} and write $A=A_0\times_{k_0}k$. In this section we assume further the following.

\begin{hypothesis}\label{hypothesis: potGL2}\hfill
\begin{enumerate}[i)]
\item $A_0$ is potentially of $\GL_2$-type and does not have potential CM. 
\item Both $k_0$ and $k$ are totally real.
\end{enumerate}
\end{hypothesis}

Note that under this hypothesis, we have $nm=\dim(B)$, and hence the spaces $V_\lambda(B)^{\alpha_B}$ have $\Qbar_\ell$-dimension~$2$. Observe also that Hypothesis~\ref{hypothesis: potGL2} implies Hypothesis~\ref{hypothesis: main}.
 Keep the notations~$B$,~$\alpha_B$, and~$F$ of Section~\ref{section: td}. 
The goal of this section is to present $\Rr=(V_\lambda(B))_\lambda$ as a rank $2$ weakly compatible system of $\ell$-adic representations of $G_k$ defined over $F$ (see \cite[\S5.1]{BLGGT14} for the definition of weakly compatible system of $\ell$-adic representations). This will rely on classical work of Ribet and on the following result of Wu (in fact, Wu's result extends work of Ellenberg and Skinner \cite[Prop. 2.10]{ES01}, who considered the $\dim(B)=1$ case).

\begin{proposition}[Cor. 2.1.15, Prop. 2.2.1, \cite{Wu11}]\label{proposition: Wu}
Suppose that $A_0$ satisfies Hypothesis \ref{hypothesis: potGL2}. Then there exists an abelian variety $A^{\alpha_B}$ of $\GL_2$-type defined over~$k$ satisfying:
\begin{enumerate}[i)]
\item $\dim(A^{\alpha_B})=[F:\Q]$ and there exists an inclusion $F\hookrightarrow \End(A^{\alpha_B})$.
\item There is an isomorphism of $\Qbar_\ell[G_k]$-modules
$$
V_\lambda(A^{\alpha_B})\simeq V_\lambda(B)^{\alpha_B}\,,
$$
where $V_\lambda(A^{\alpha_B})$ is the tensor product $V_\ell(A^{\alpha_B})\otimes_{\Q_\ell\otimes F,\sigma}\Qbar_\ell$ taken with respect to the map induced by the embedding $\sigma\colon F\hookrightarrow \Qbar_\ell$ attached to the prime $\lambda$.
\end{enumerate}
\end{proposition}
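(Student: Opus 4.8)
The plan is to obtain the statement from the construction of abelian varieties of $\GL_2$-type attached to building blocks, due to Ribet over $\Q$ and to Pyle and Wu in general; in the form stated it is \cite[Cor.~2.1.15, Prop.~2.2.1]{Wu11}, and I would present the argument along the following lines.

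First I would record that, after the enlargement of $K$ made in Section~\ref{section: td}, the splitting character $\alpha_B$ has finite image: it is trivial on $G_K$ and hence factors through the finite group $\Gal(K/k)$. Thus $\varrho_\lambda^{\alpha_B}$ is, up to an Artin twist by $\alpha_B$, the $\ell$-adic representation carried by $V_\lambda(B)$ with its $\mu_s$-twisted $G_k$-action; in particular $B$ is a $k$-building block in the sense of Ribet--Pyle, the relevant data being $\End(B)$, the system $\{\mu_s\}$, and the trivialization $\alpha_B$ of $\gamma_B$ furnished by Lemma~\ref{lemma: alpha}.

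The essential step is then the realization of $\varrho_\lambda^{\alpha_B}$ by an abelian variety over $k$. Following Pyle, set $C=\Res_{K/k}(B_K)$, so that $\dim C=[K:k]\dim B$ and, since $B$ is a $k$-variety, $C_\Qbar\sim B^{[K:k]}$. Galois descent identifies inside $\End(C)$ the crossed-product algebra built from $\End(B)$ and $\{\mu_s\}$ with multiplication twisted by $c_B$, whose class in the Brauer group is governed by $\gamma_B$. The equality \eqref{equation: 1boundaryfin} then produces, after extending scalars to $F$, an idempotent $e\in\End(C)\otimes_\Q F$, and one lets $A^{\alpha_B}$ be the $F$-abelian variety cut out by $e$. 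A dimension count gives $\dim A^{\alpha_B}=[F:\Q]$ together with an inclusion $F\hookrightarrow\End(A^{\alpha_B})$, so $A^{\alpha_B}$ is of $\GL_2$-type over $k$; this is i). For ii) one uses $V_\ell(C)\simeq\Ind_{G_K}^{G_k}V_\ell(B_K)$ and decomposes $V_\ell(C)\otimes F$ along the idempotents of the crossed product: the component attached to $e$, base changed via $\sigma\colon F\hookrightarrow\Qbar_\ell$, is two-dimensional, and matching its $G_k$-action with the explicit composition \eqref{equation: comp1} is a direct computation resting on the $2$-cocycle relation for $c_B$; this yields $V_\lambda(A^{\alpha_B})\simeq V_\lambda(B)^{\alpha_B}$. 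The hypotheses that $k_0$ and $k$ are totally real and that $A_0$ has no potential CM enter only at the end, to guarantee that $\varrho_\lambda^{\alpha_B}$ is odd and non-dihedral, so that one is genuinely in the $\GL_2$-type situation rather than in the potential CM one and Wu's statement applies as phrased.

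The hard part is precisely this descent to the base field $k$ and the identification of the $\lambda$-adic realization of $A^{\alpha_B}$ with the twisted action $\varrho_\lambda^{\alpha_B}$ of \eqref{equation: comp1}: the descent is possible exactly because $\gamma_B$ dies in $H^2(G_k,F^\times)$, which is the content of the existence of $\alpha_B$, and the bookkeeping needed to see that the idempotent $e$ carves out precisely the tensor factor produced in Section~\ref{section: td} is the technical core of the matter. The reduction to finite image, the dimension count, and the verification of the cocycle computation are routine.
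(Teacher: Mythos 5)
The paper does not actually prove this proposition: it is quoted verbatim from Wu's thesis (Cor.~2.1.15 and Prop.~2.2.1 of \cite{Wu11}), which in turn elaborates the Ribet--Pyle construction of $\GL_2$-type varieties attached to building blocks. Your sketch is a faithful outline of that underlying construction --- Weil restriction $C=\Res_{K/k}(B_K)$, identification of the crossed product inside $\End(C)\otimes\Q$, use of the splitting map $\alpha_B$ to locate the simple factor with $F$ in its endomorphism algebra, and the cocycle computation matching the $G_k$-action on the corresponding piece of $V_\ell(C)\simeq\Ind_{G_K}^{G_k}V_\ell(B_K)$ with \eqref{equation: comp1} --- so it supplies more than the paper does, and correctly isolates the descent step as the crux.

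Two small points of care. First, an idempotent in $\End(C)\otimes_\Q F$ does not directly cut out an abelian subvariety of $C$; in the Ribet--Pyle--Wu argument one decomposes the semisimple $\Q$-algebra $\End(C)\otimes\Q$ into simple factors, and the splitting map $\alpha_B$ (together with the fact that $F$ contains the maximal subfield $E$, so that $F$ splits $\End(B)$) serves to identify the simple factor whose center is (the appropriate subfield of) $F$; the projector onto that factor lives in $\End(C)\otimes\Q$ itself and its image is $A^{\alpha_B}$. Second, the hypotheses that $k$ is totally real and that $A_0$ has no potential CM are not merely a final sanity check: they are what place $B$ in the RM/QM building-block dichotomy to which Wu's statements apply, and in particular what forces $nm=\dim B$ so that the cut-out piece is genuinely two-dimensional over each completion. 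Neither point affects the viability of your argument.
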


\begin{proposition}[Ribet]\label{proposition: Ribet}
Suppose that $A_0$ satisfies Hypothesis \ref{hypothesis: potGL2}. Then $\Rr=(V_\lambda(B)^{\alpha_B})_\lambda$ is a weakly compatible system of $\ell$-adic representations of $G_k$ defined over $F$, of rank $2$, and satisfying:
\begin{enumerate}[i)] 
\item It is pure of weight 1, regular, and with Hodge--Tate weights $0$ and $1$.
\item Its determinant $\delta_\lambda:=\det(V_\lambda(B)^{\alpha_B})$ is of the form $\varepsilon_\lambda\chi_\ell$, where 
$$
\varepsilon_\lambda\colon G_k\stackrel{\varepsilon}{\rightarrow} F^\times\stackrel{\sigma}{\hookrightarrow}\Qbar_\ell^\times
$$
is a finite order character and $\chi_\ell\colon G_k\rightarrow \Q_\ell^\times$ is the $\ell$-adic cyclotomic character. 
\item It is strongly irreducible and $\End_{G_{K'}}(V_\lambda(B)^{\alpha_B})\simeq \Qbar_\ell$, for every finite extension~$K'/k$. 
\item It is totally odd, in the sense that $\delta_\lambda(\tau)=-1$ for every complex conjugation $\tau\in G_k$.
\end{enumerate}  
\end{proposition}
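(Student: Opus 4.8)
The plan is to deduce everything from Proposition \ref{proposition: Wu} together with Proposition \ref{proposition: noiso} and classical facts about abelian varieties of $\GL_2$-type. By Proposition \ref{proposition: Wu}, we have an isomorphism $V_\lambda(B)^{\alpha_B}\simeq V_\lambda(A^{\alpha_B})$ of $\Qbar_\ell[G_k]$-modules, where $A^{\alpha_B}$ is an abelian variety of $\GL_2$-type over $k$ of dimension $[F:\Q]$ with $F\hookrightarrow\End(A^{\alpha_B})$. The first step is therefore to recall, following Ribet \cite[\S3]{Rib92} (or Wu \cite{Wu18}), that the family $(V_\lambda(A^{\alpha_B}))_\lambda$, as $\lambda$ runs over the primes of $F$, is a weakly compatible system of $\ell$-adic representations of $G_k$ of rank $2$ defined over $F$: this is the content of the $\GL_2$-type construction, where the decomposition $V_\ell(A^{\alpha_B})\otimes_{\Q_\ell}\Qbar_\ell\simeq\bigoplus_\lambda V_\lambda(A^{\alpha_B})$ respects the $F$-module structure, the Frobenius characteristic polynomials at good primes have coefficients in $F$ independent of $\lambda$, and the ramification is controlled by the conductor of $A^{\alpha_B}$. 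Transporting this through the isomorphism of Proposition \ref{proposition: Wu} gives the rank $2$ weakly compatible system $\Rr$ over $F$.

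Next I would check items i), ii), iv), which are all "abelian variety" statements read off from $A^{\alpha_B}$. Purity of weight $1$ and Hodge--Tate weights $\{0,1\}$ hold because $V_\ell(A^{\alpha_B})$ is the Tate module of an abelian variety, and passing to the $\lambda$-component $V_\lambda(A^{\alpha_B})$ only isolates a two-dimensional piece with the same Hodge--Tate weights; regularity ($0\neq 1$) is then immediate. For ii), the determinant of the $\GL_2$-type system is the product of the cyclotomic character $\chi_\ell$ with a finite-order character; here the finite-order part is precisely the Nebentypus-type character $\varepsilon$ of $A^{\alpha_B}$ valued in $F^\times$, composed with $\sigma$, which is exactly the claimed shape $\varepsilon_\lambda\chi_\ell$ (cf. Proposition \ref{proposition: Ribet} ii) and Ribet's analysis of the determinant). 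For iv), total oddness follows from the Hodge--Tate / Betti realization: complex conjugation acts on $H^1$ of an abelian variety over $\R$ with eigenvalues $+1$ and $-1$ in equal numbers, so on the two-dimensional $\lambda$-piece $\tau$ has determinant $-1$; alternatively, $\det=\varepsilon_\lambda\chi_\ell$ and $\chi_\ell(\tau)=-1$ while $\varepsilon_\lambda(\tau)=1$ since $k$ is totally real and $\varepsilon$ is unramified at the archimedean places in the relevant sense. One should be slightly careful to phrase iv) using that $k$ is totally real, invoking Hypothesis \ref{hypothesis: potGL2}.

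The one step that needs genuine input beyond "read it off $A^{\alpha_B}$" is iii): strong irreducibility, i.e. $\End_{G_{K'}}(V_\lambda(B)^{\alpha_B})\simeq\Qbar_\ell$ for every finite extension $K'/k$. For this I would argue that $A_0$ has no potential CM (Hypothesis \ref{hypothesis: potGL2}), so the two-dimensional $\lambda$-adic representation $V_\lambda(B)^{\alpha_B}$ restricted to any open subgroup cannot become abelian — otherwise $B$, and hence $A_0$, would acquire CM over a finite extension, contradicting the QM/RM trichotomy that excludes CM. Concretely: if $V_\lambda(B)^{\alpha_B}$ were reducible over some $G_{K'}$, its restriction would be a sum of characters, forcing potential CM of the associated building block; and $\End_{G_{K'}}$ being larger than $\Qbar_\ell$ likewise forces either reducibility or the image landing in a nonsplit torus normalizer with abelian index-two subgroup, again giving potential CM. I expect this to be the main obstacle, and the cleanest route is to cite the corresponding statements in Ribet \cite{Rib92} / Wu \cite{Wu18} (or \cite{ES01} in the $\dim B=1$ case), since "$\GL_2$-type without potential CM $\Rightarrow$ strongly irreducible $\lambda$-adic system" is exactly what their work establishes; Proposition \ref{proposition: noiso} can additionally be used to rule out that distinct $\lambda_i$-pieces coincide, which is a complementary point. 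Finally $\End_{G_{K'}}\simeq\Qbar_\ell$ follows from strong irreducibility together with the fact that a strongly irreducible two-dimensional representation has scalar endomorphism algebra.
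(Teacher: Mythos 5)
Your proposal is correct and follows essentially the same route as the paper: reduce via Proposition \ref{proposition: Wu} to the corresponding statements for the $\GL_2$-type variety $A^{\alpha_B}$, and then invoke Ribet's lemmas (\cite[Lem.\ 3.1--3.3]{Rib92}) for $k=\Q$ and Wu \cite[\S2.2]{Wu11} for the general totally real case. The extra detail you supply on each item (and the aside about Proposition \ref{proposition: noiso}) is consistent with, but not required by, the paper's citation-based proof.
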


\begin{proof}
By Proposition \ref{proposition: Wu}, it suffices to prove the corresponding statements for $(V_\lambda(A^{\alpha_B}))_\lambda$.  When $k=\Q$, this can be found in the work of Ribet: the Hodge--Tate property and the description of the determinant follow from \cite[lem. 3.1]{Rib92}, the totally oddness is \cite[lem. 3.2]{Rib92}, and strong irreducibility amounts to \cite[lem. 3.3]{Rib92}. See \cite[\S2.2]{Wu11} for the general statements.
\end{proof}

\begin{remark}
We may also regard $(V_\lambda(B,A)^{\alpha_B})_\lambda$ as a compatible system of $\ell$-adic representations defined over $F$. Note that its tensor product with $(V_\lambda(B)^{\alpha_B})_\lambda$ equals $(V_\lambda(A))_\lambda$ which is in fact defined over $M$. 
\end{remark}

We will later make use of the following result (see \cite[Thm. 5.4.1]{BLGGT14}).

\begin{theorem}[\cite{BLGGT14}]\label{theorem: automorphy}
Suppose that $k$ is a totally real field. Then given natural numbers $e_1,\dots,e_r\geq 0$ and a finite extension $k^*/k$, there exists a totally real extension $k'/k$ such that:
\begin{enumerate}[i)]
\item $\Symm^{e_1}(\Rr|_{G_{k'}}),\dots,\Symm^{e_r}(\Rr|_{G_{k'}})$ are all automorphic;
\item $k'/k$ is linearly disjoint from $k^*$ over $k$; and
\item $k'/\Q$ is Galois.
\end{enumerate}  
\end{theorem}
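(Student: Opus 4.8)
The plan is to derive the statement directly from the potential automorphy results of Barnet-Lamb, Gee, Geraghty and Taylor; the substance of the argument is simply to check that $\Rr$, and hence each of its symmetric powers, falls within the class of compatible systems to which \cite[Thm.~5.4.1]{BLGGT14} applies.

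First I would collect the input, all of which is provided by Proposition~\ref{proposition: Ribet}: the system $\Rr=(V_\lambda(B)^{\alpha_B})_\lambda$ is a rank $2$ weakly compatible system of $\ell$-adic representations of $G_k$ with coefficients in the number field $F$; it is pure of weight $1$, regular, with Hodge--Tate weights $\{0,1\}$; its determinant is $\varepsilon_\lambda\chi_\ell$ with $\varepsilon$ of finite order; it is totally odd; and it is strongly irreducible, with $\End_{G_{K'}}(V_\lambda(B)^{\alpha_B})\simeq\Qbar_\ell$ for every finite extension $K'/k$. For a rank $2$ system the isomorphism $\Rr^\vee\simeq\Rr\otimes(\varepsilon_\lambda\chi_\ell)^{-1}$ is automatic, via the natural alternating pairing $\Rr\times\Rr\to\wedge^2\Rr=\det\Rr$, so $\Rr$ is essentially symplectically self-dual, and the total oddness of $\delta_\lambda$ places this self-duality in the ``odd'' range required by the automorphy lifting theorems; strong irreducibility over every finite extension of $k$ rules out the dihedral and CM-type images excluded in \cite{BLGGT14}.

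Next I would pass to symmetric powers. For $e_i\geq 1$ the system $\Symm^{e_i}\Rr$ is weakly compatible of rank $e_i+1$, pure of weight $e_i$, and regular with Hodge--Tate weights $\{0,1,\dots,e_i\}$; being a symmetric power of an essentially self-dual rank $2$ system it is again essentially self-dual (with pairing symmetric or alternating according to the parity of $e_i$, and with a multiplier one tracks in order to stay in the allowed class); and since $\Rr$ remains strongly irreducible over every finite extension of $k$ and is not induced from a character, each $\Symm^{e_i}\Rr$ stays irreducible and retains the largeness of image required by the automorphy lifting input. For $e_i=0$ the system $\Symm^{0}\Rr$ is trivial and automorphic over any field. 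I would then invoke \cite[Thm.~5.4.1]{BLGGT14} applied to $\Rr$ with the finite collection $\{\Symm^{e_1}\Rr,\dots,\Symm^{e_r}\Rr\}$ and the auxiliary field $k^*$: it produces a totally real extension $k'/k$, linearly disjoint from $k^*$ over $k$ and with $k'/\Q$ Galois, over which every $\Symm^{e_i}(\Rr|_{G_{k'}})$ is automorphic. This is exactly the assertion.

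The genuinely hard part, which is precisely what \cite{BLGGT14} supplies and what we are merely citing, is to obtain a \emph{single} field $k'$ serving all of the symmetric powers at once while simultaneously being Galois over $\Q$ and linearly disjoint from $k^*$: since automorphy is only known to propagate along solvable base change, one cannot simply intersect or compose the fields obtained by treating the $\Symm^{e_i}\Rr$ in isolation. This is handled inside \cite{BLGGT14} by running the Dwork-family method of Harris, Shepherd-Barron and Taylor once, with a choice of auxiliary variety and auxiliary prime that simultaneously connects all of the $\Symm^{e_i}\Rr$ to automorphic representations, and by building the disjointness from $k^*$ and the Galois-over-$\Q$ condition directly into the construction of $k'$ --- the latter being the feature of the theorem that will actually be exploited later, via Brauer's induction theorem, in the analysis of the relevant Rankin--Selberg $L$-functions.
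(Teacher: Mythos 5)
Your proposal is correct and matches the paper's treatment: the paper states this result purely as a citation of \cite[Thm.~5.4.1]{BLGGT14}, with the hypotheses of that theorem (rank $2$, regularity, purity, odd essential self-duality via the determinant, and strong irreducibility ruling out the dihedral case) having been established beforehand in Proposition~\ref{proposition: Ribet}, exactly the checklist you run through. Your added remarks on why the symmetric powers and the simultaneous choice of a single $k'$ are handled inside \cite{BLGGT14} are accurate but not part of any argument the paper itself supplies.
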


\section{Sato--Tate groups and Sato--Tate conjecture}\label{section: STgroups}

Let $A_0$ be an abelian variety defined over $k_0$ satisfying Hypothesis \ref{hypothesis: potGL2}. Assume further that $k=k_0$, where $k$ is the field given by Theorem~\ref{theorem: inddec}. We will keep the notations of Section \ref{section: td}, but note that $A$ and $A_0$ (resp. $M$ and $M_0$, etc) become now synonims. 

The aim of this section is to describe the Sato--Tate group of $A$, denoted $\ST(A)$. We will describe $\ST(A)$ as the Kronecker product of $m=[M:\Q]$ copies of $\SU(2)$ and a finite group $H$ closely related to the image of~$\theta_\lambda^{\alpha_B}$. We will also state the Sato--Tate conjecture for $A$. 

\textbf{Sato--Tate groups.}
Let us start by briefly recalling the definition of $\ST(A)$, a compact real Lie subgroup of $\USp(2g)$, only well-defined up to conjugacy. Fix a prime $\ell$ and let
$$
\varrho_\ell\colon G_k\rightarrow \GL(V_\ell(A))
$$ 
be the $\ell$-adic representation attached to $A$. Denote by $G_\ell^{\Zar}(A)$  the Zariski closure of the image of $\varrho_\ell$ in the algebraic group $\GL_{V_\ell(A)}/\Q_\ell$. The compatibility of $\varrho_\ell$ with the Weil pairing, ensures that $G_\ell^{\Zar}(A)$ sits inside $\GSp_{2g}/\Q_\ell$. Let $G_\ell^{\Zar,1}(A)$ denote the kernel of the restriction to $G_\ell^{\mathrm{Zar}}(A)$ of the similitude character of $\GSp_{2g}$. Fix an embedding $\iota\colon \Qbar_\ell\rightarrow \C$ and denote by $G_\iota^1(A)$ the group of $\C$-points of the base change of $G_\ell^{\Zar,1}(A)$ from $\Q_\ell$ to $\C$ via $\iota$. The Sato--Tate group $\ST(A)$ is defined as a maximal compact subgroup of $G_\iota^1(A)$. It is expected that this definition coincides with the definition given by Banaszak--Kedlaya \cite{BK15}, which is independent of the choice of $\ell$. We refer to \cite[\S2.1]{FKRS12} for more details. 

We next define in a similar way a Sato--Tate group for the system $(V_\lambda(B)^{\alpha_B})_\lambda$ along the lines of \cite[\S7]{BLGG11}. We will formally denote it by $\ST(B^{\alpha_B})$ in order to emphasize that it is not the Sato--Tate group of the abelian variety $B$ defined over $K$. Let $G_\lambda^{\Zar}(B^{\alpha_B})$ denote the Zariski closure of the image of 
$$
\varrho_\lambda^{\alpha_B}\colon G_k\rightarrow \GL(V_\lambda(B))
$$
in the algebraic group $\GL_{V_\lambda(B)}/\Qbar_\ell$. Let $a$ denote the order of the character $\varepsilon$ introduced in Section~\ref{section: GL2}. Let $G_\lambda^{\Zar,1}(B^{\alpha_B})$ denote the kernel of the map
$$
\det{}^a\colon G_\lambda^{\Zar}(B^{\alpha_B})\rightarrow \mathbb G_m\,.
$$
Denote by $G_\iota^1(B^{\alpha_B})$ the group of $\C$-points of the base change of $G_\lambda^{\Zar,1}(B^{\alpha_B})$ from $\Qbar_\ell$ to $\C$ via $\iota$. The Sato--Tate group $\ST(B^{\alpha_B})$ is defined as a maximal compact subgroup of $G_\iota^1(B^{\alpha_B})$.

\begin{lemma}\label{lemma: STB}
We have $\ST(B^{\alpha_B})\simeq \SU(2) \otimes \mu_{2a}$.
\end{lemma}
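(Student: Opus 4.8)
The plan is to identify the Sato--Tate group $\ST(B^{\alpha_B})$ by comparing the $\ell$-adic image of $\varrho_\lambda^{\alpha_B}$ with that of the honest abelian variety $A^{\alpha_B}$ of $\GL_2$-type produced by Proposition~\ref{proposition: Wu}, and then computing its Zariski closure using Proposition~\ref{proposition: Ribet}. First I would invoke Proposition~\ref{proposition: Wu}(ii) to replace $V_\lambda(B)^{\alpha_B}$ by $V_\lambda(A^{\alpha_B})$, so that we are studying the image of a genuine $\lambda$-adic representation of $G_k$ attached to a $\GL_2$-type abelian variety. By Proposition~\ref{proposition: Ribet}(iii), this representation is strongly irreducible with $\End_{G_{K'}} \simeq \Qbar_\ell$ for every finite extension $K'/k$; by part (i) it is pure of weight $1$, hence (after the usual Minkowski--Weil/Bogomolov argument, or directly from the fact that a building block without CM has ``big'' monodromy) the connected component of $G_\lambda^{\Zar}(B^{\alpha_B})$ contains $\SL_2$. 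Combined with part (ii), which says $\det(V_\lambda(B)^{\alpha_B}) = \varepsilon_\lambda \chi_\ell$ with $\varepsilon_\lambda$ of finite order $a$, one gets that $G_\lambda^{\Zar}(B^{\alpha_B})$ is an extension of a finite cyclic group (coming from $\varepsilon_\lambda$) by $\GL_2$ inside $\GL_2$ itself; more precisely $G_\lambda^{\Zar}(B^{\alpha_B}) = \GL_2$ when $a=1$, and in general it is the subgroup of $\GL_2$ on which $\det$ is constrained only by the cyclotomic twist.

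Next I would cut down by the character $\det^a$. On $G_\lambda^{\Zar,1}(B^{\alpha_B}) = \ker(\det^a)$, the similitude factor is forced to be a $2a$-th root of unity (since on this kernel $\det^{2a} = (\varepsilon_\lambda\chi_\ell)^{2a}$ evaluates, after the purity normalization that divides by $q^{1/2}$, into $\mu_{2a}$), while the ``$\SL_2$ part'' survives intact. Passing to a maximal compact subgroup after base change to $\C$ via $\iota$, the $\SL_2(\C)$ becomes $\SU(2)$ and the finite part becomes the group $\mu_{2a}$ of $2a$-th roots of unity sitting as scalars; the two commute and generate, giving the Kronecker (central) product $\SU(2)\otimes\mu_{2a}$. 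I would be careful to spell out that $-I \in \SU(2)$ is identified with $-1 \in \mu_{2a}$, so the product is genuinely a central product and not a direct product — this is why the notation $\SU(2)\otimes\mu_{2a}$ (rather than $\SU(2)\times\mu_{2a}$) is the correct one, and why it has the expected number of connected components, namely $a$.

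The main obstacle I anticipate is the lower bound on the connected monodromy group: showing that the identity component of $G_\lambda^{\Zar}(B^{\alpha_B})$ really contains $\SL_2$, rather than, say, a torus. This is where the hypothesis that $A_0$ has no potential CM is essential, and it is precisely the content of the strong irreducibility statement in Proposition~\ref{proposition: Ribet}(iii): a two-dimensional $\lambda$-adic representation that remains irreducible after every finite base change, is Hodge--Tate regular of weight $1$, and has determinant a cyclotomic twist of a finite character, must have connected monodromy equal to $\GL_2$ (its image is open in $\GL_2(F_\lambda)$ by a theorem in the style of Ribet/Momose for $\GL_2$-type abelian varieties). Once that input is granted the rest is the essentially formal computation of the kernel of $\det^a$ and the passage to a maximal compact, which I would present concisely rather than in full detail.
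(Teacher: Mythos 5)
Your proposal is correct and relies on the same two inputs as the paper (parts ii) and iii) of Proposition~\ref{proposition: Ribet}), but it assembles them differently. You first identify the algebraic monodromy group: strong irreducibility plus $\End_{G_{K'}}\simeq\Qbar_\ell$ force the identity component of $G_\lambda^{\Zar}(B^{\alpha_B})$ to contain $\SL_2$, and the determinant $\varepsilon_\lambda\chi_\ell$ (with $\chi_\ell$ of infinite order) then gives $G_\lambda^{\Zar}(B^{\alpha_B})=\GL_2$; after that, the lemma is pure group theory about $\ker(\det^a)\subseteq\GL_2$ and its maximal compact $\U(2)_a\simeq\SU(2)\otimes\mu_{2a}$. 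The paper instead stays at the level of compact groups: it embeds $\ST(B^{\alpha_B})$ into $\U(2)_a\simeq\SU(2)\otimes\mu_{2a}$, shows the two projections to $\SU(2)/\langle -I\rangle$ and $\mu_{2a}/\langle -1\rangle$ are surjective (again by parts iii) and ii)), and rules out a proper ``graph'' subgroup via Goursat's lemma together with the fact that $\SU(2)$ has no proper normal subgroups of finite index. Your route buys a cleaner conceptual statement (the full Zariski closure is $\GL_2$, so the cut by $\det^a$ does all the work), at the cost of having to justify the lower bound $\SL_2\subseteq G_\lambda^{\Zar,0}$, which is exactly the step you flag; the paper's route avoids naming the Zariski closure and replaces that step by the simplicity of $\mathrm{PSU}(2)$. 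Two small imprecisions to fix: once the identity component is all of $\GL_2$, the whole group $G_\lambda^{\Zar}(B^{\alpha_B})$ equals $\GL_2$ for every $a$ (it is not an ``extension of a finite cyclic group by $\GL_2$ inside $\GL_2$''; the finite cyclic part appears only as $\pi_0$ of $\ker(\det^a)$); and the reason the scalar factor lands in $\mu_{2a}$ is simply that an element of $\ker(\det^a)$ has determinant in $\mu_a$, hence is an $\SL_2$-element times a scalar whose square lies in $\mu_a$ --- the clause about ``$\det^{2a}$ evaluating into $\mu_{2a}$ after the purity normalization'' is not the right formulation.
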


\begin{proof}
By the definition of $\ST(B^{\alpha_B})$, we clearly have a monomorphism
$$
\ST(B^{\alpha_B})\hookrightarrow \U(2)_a\,,
$$
where $\U(2)_a$ is the subgroup of $\U(2)$ consisting of those matrices $g\in \U (2)$ with $\det(g)^a= 1$. We may compose this monomorphism with the inverse of the group isomorphism 
$$
\SU(2) \otimes \mu_{2a}\rightarrow \U(2)_a\,,\qquad A \otimes \zeta \mapsto A\zeta
$$ 
to get a monomorphism $\varphi$. Since $-I$ lies in the image of $\varphi$ it will suffice to show that the induced monomorphism
$$
\tilde \varphi\colon \ST(B^{\alpha_B})/\langle -I\rangle \rightarrow  \SU(2)\otimes \mu_{2a}/\langle -I\otimes 1\rangle
$$
is surjective. Consider now the monomorphism
$$
\ST(B^{\alpha_B})/\langle -I\rangle \stackrel{\tilde \varphi}{\rightarrow } \SU(2)\otimes \mu_{2a}/\langle -I\otimes 1\rangle \xrightarrow{\pi_1\times \pi_2}\SU(2)/\langle -I\rangle \times \mu_{2a}/\langle -1\rangle\,,
$$
where $\pi_i$ denotes the natural projection map. Let $N_i$ denote the kernel of $\pi_i \circ \tilde \varphi$.
 By part $iii)$ (resp. part $ii)$) of Proposition \ref{proposition: Ribet}, we have that that $\pi_1 \circ \tilde \varphi$ (resp. $\pi_2\circ \tilde \varphi$) is surjective. Then by Goursat's lemma (as in \cite[lem. 5.2.1]{Rib76}) we have that
$$
\SU(2)/\tilde \varphi( N_2) \simeq \mu_{2a}/\tilde \varphi( N_1)\,.
$$ 
Since $\SU(2)$ has no proper normal subgroups of finite index, we deduce that $\tilde\varphi(N_2)\simeq \SU(2)/\langle -I\rangle$. This immediately implies that $\tilde \varphi$ is surjective.
\end{proof}

By enlarging $F$ if necessary we can assume that it contains the values $\sqrt{\varepsilon(s)}$ for $s\in G_k$, where $\varepsilon$ is the character appearing in Proposition \ref{proposition: Ribet}.  Consider the well-defined group homomorphism 
$$
\tilde\varepsilon^{1/2}\colon G_k\rightarrow F^{\times}/\langle -1\rangle\,,\qquad \tilde\varepsilon^{1/2}(s)=\sqrt{\varepsilon(s)}\pmod{\langle -1\rangle}\,,
$$ and denote by $\tilde\varepsilon^{-1/2}$ its inverse.  We will denote by $k_\varepsilon/k$ the field extension cut out by the homomorphism $\tilde\varepsilon^{1/2}$, which coincides with the field cut out by $\varepsilon$. Let us denote by 
$$
\tilde \theta_\lambda^{\alpha_B} \colon G_{k}\rightarrow \GL(V_\lambda(B,A))/\langle -I\rangle
$$
the group homomorphism naturally induced by $\theta_\lambda^{\alpha_B}$.

\begin{proposition}\label{proposition: K0} The following field extensions coincide:
\begin{enumerate}[i)]
\item The endomorphism field $K_0/k$.
\item The field extension cut out by the representation $\theta_\lambda^{\alpha_B}\otimes \theta_\lambda^{\alpha_B,\vee}$. 
\item The field extension cut out by the group homomorphism
$$
\tilde \varepsilon^{1/2}\otimes \tilde \theta_\lambda^{\alpha_B} \colon G_{k}\rightarrow \GL(V_\lambda(B,A))/\langle -I\rangle\,. 
$$
\end{enumerate}
\end{proposition}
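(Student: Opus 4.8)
The strategy is to show that each of the three field extensions is the fixed field of the same subgroup of $G_k$ (or of $G_{k_0}$, after noting that all three are extensions of $k_0$ contained in $K_0$), by identifying the kernel of the relevant representation/homomorphism with $G_{K_0}$. The cleanest route is to prove the two equalities i)$=$ii) and ii)$=$iii) separately.

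For i)$=$ii): recall that $K_0/k_0$ is by definition the minimal extension over which all endomorphisms of $A_0$ become defined, equivalently the fixed field of the kernel of the natural Galois action on $\End(A_{0,\Qbar})\otimes\Qbar_\ell \simeq \End_{G_{k_0}\text{-lin}}$-data; concretely, $G_{K_0}$ is the kernel of the conjugation action of $G_k$ on $\End(A_\Qbar)\otimes\Qbar_\ell$. Now use Proposition~\ref{proposition: decomposition Tate}: since $V_\lambda(A)\simeq V_\lambda(B)^{\alpha_B}\otimes V_\lambda(B,A)^{\alpha_B}$ and, by part iii) of Proposition~\ref{proposition: Ribet}, $\End_{G_{K'}}(V_\lambda(B)^{\alpha_B})\simeq\Qbar_\ell$ for every finite $K'/k$, the endomorphism algebra $\End_{G_{K'}}(V_\lambda(A))$ is governed entirely by $\End_{G_{K'}}(V_\lambda(B,A)^{\alpha_B})$ (tensoring a strongly irreducible rank-$2$ system with a finite-image representation: an $s\in G_k$ acts as a scalar-times-something on the $V_\lambda(B)^{\alpha_B}$-factor only in a controlled way). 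More precisely, I would argue that for $s\in G_k$, the image $\varrho_\lambda(s)$ (the action on $V_\lambda(A)$) normalizes $\End(A_\Qbar)\otimes\Qbar_\ell$ trivially — i.e. $s\in G_{K_0}$ — if and only if $s$ acts on $V_\lambda(A)$ commuting with the full matrix algebra $\M_{nd}(M\otimes\Qbar_\ell)\cong\End(A_{K'})\otimes\Qbar_\ell$ after base change, and by the tensor decomposition this happens exactly when $\theta_\lambda^{\alpha_B}(s)\otimes\theta_\lambda^{\alpha_B,\vee}(s)$ is trivial (the $V_\lambda(B)^{\alpha_B}$ factor contributes no endomorphisms by Proposition~\ref{proposition: Ribet}(iii), so all the ``extra'' endomorphisms that appear upon restriction come from the finite-image factor acting trivially). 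Hence $\ker(\theta_\lambda^{\alpha_B}\otimes\theta_\lambda^{\alpha_B,\vee})=G_{K_0}$.

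For ii)$=$iii): this is essentially a bookkeeping comparison of the two homomorphisms. By definition $\theta_\lambda^{\alpha_B,\vee}\simeq \theta_\lambda^{\alpha_B}$ up to the determinant twist, and $\det\theta_\lambda^{\alpha_B}$ is, up to a power, related to $\varepsilon$ via the observation in the Remark after Proposition~\ref{proposition: Ribet} that $\alpha_B$ enters $V_\lambda(B)^{\alpha_B}$ and $V_\lambda(B,A)^{\alpha_B}$ with inverse exponents together with the determinant formula $\delta_\lambda=\varepsilon_\lambda\chi_\ell$ from Proposition~\ref{proposition: Ribet}(ii). Concretely $\theta_\lambda^{\alpha_B}\otimes\theta_\lambda^{\alpha_B,\vee}$ is the composition of $\theta_\lambda^{\alpha_B}$ with the adjoint (conjugation) action $\mathrm{PGL}(V_\lambda(B,A))\to\Aut(\End(V_\lambda(B,A)))$, so it factors through $\tilde\theta_\lambda^{\alpha_B}\colon G_k\to\mathrm{PGL}(V_\lambda(B,A))$; and the homomorphism in iii) is $\tilde\varepsilon^{1/2}\otimes\tilde\theta_\lambda^{\alpha_B}$ valued in $\Aut(V_\lambda(B,A)/\langle-I\rangle)$. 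Since $\tilde\varepsilon^{1/2}$ has the same kernel as $\varepsilon$ (its square), and $\varepsilon$ is (a power of) $\det\theta_\lambda^{\alpha_B}$ up to the scalar ambiguity that is killed in $\mathrm{PGL}$, the kernels of the two maps in ii) and iii) coincide: a twist by a scalar character does not change a projective representation, and passing from $V_\lambda(B,A)$ to $V_\lambda(B,A)/\langle-I\rangle$ only identifies $\pm I$, which is already invisible to the conjugation action. I would spell this out by showing $s\in\ker(\text{ii})$ $\iff$ $\theta_\lambda^{\alpha_B}(s)$ is a scalar $\iff$ $\theta_\lambda^{\alpha_B}(s)\in\langle-I\rangle\cdot(\text{scalar dictated by }\tilde\varepsilon^{1/2}(s))$ $\iff$ $s\in\ker(\text{iii})$.

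**Main obstacle.** The delicate point is the equality i)$=$ii): one must argue rigorously that restricting $V_\lambda(A)$ to $G_{K'}$ produces \emph{no} new $G_{K'}$-endomorphisms beyond those forced by the finite-image factor $\theta_\lambda^{\alpha_B}$ becoming trivial — equivalently, that the rank-$2$ factor $V_\lambda(B)^{\alpha_B}$ cannot conspire with $V_\lambda(B,A)^{\alpha_B}$ to create endomorphisms that are not detected by $\theta_\lambda^{\alpha_B}\otimes\theta_\lambda^{\alpha_B,\vee}$. This is exactly where Proposition~\ref{proposition: Ribet}(iii) (strong irreducibility, $\End_{G_{K'}}(V_\lambda(B)^{\alpha_B})=\Qbar_\ell$ for \emph{all} finite $K'/k$) is indispensable, together with the absence of isomorphisms $V_{\lambda_i}(B)^{\alpha_B}\not\simeq V_{\lambda_j}(B)^{\alpha_B}$ over $G_{K'}$ (Proposition~\ref{proposition: noiso}) to handle the sum over $\lambda$ in Theorem~\ref{theorem: inddec}; I would combine these via Schur's lemma / a Clifford-theory computation of $\Hom_{G_{K'}}(V_\lambda(B)^{\alpha_B}\otimes W, V_\lambda(B)^{\alpha_B}\otimes W')\simeq\Hom_{G_{K'}}(W,W')$ for the finite-image pieces $W,W'$. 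Once that reduction is in place the rest is formal.
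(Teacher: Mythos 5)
Your argument for i) $=$ ii) is sound and is essentially the paper's: by Faltings, $K_0$ is the minimal extension $K'/k$ with $\End_{G_{K'}}(V_\lambda(A))\simeq\M_{nd}(\Qbar_\ell)$, and the tensor decomposition of Proposition~\ref{proposition: decomposition Tate} together with $\End_{G_{K'}}(V_\lambda(B)^{\alpha_B})\simeq\Qbar_\ell$ for all finite $K'/k$ (Proposition~\ref{proposition: Ribet}, part iii)) yields $\End_{G_{K'}}(V_\lambda(A))\simeq\bigl(V_\lambda(B,A)^{\alpha_B}\otimes V_\lambda(B,A)^{\alpha_B,\vee}\bigr)^{G_{K'}}$, which is exactly the reduction you describe in your ``main obstacle'' paragraph.

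The genuine gap is in ii) $=$ iii), which you treat as bookkeeping. One inclusion is indeed formal: if $\tilde\varepsilon^{1/2}(s)\otimes\tilde\theta_\lambda^{\alpha_B}(s)$ is trivial, then $\theta_\lambda^{\alpha_B}(s)$ is a scalar and $s$ lies in the kernel of ii). The converse requires knowing that, for $s\in G_{K_0}$, the scalar by which $\theta_\lambda^{\alpha_B}(s)$ acts is precisely $\varepsilon(s)^{-1/2}$ up to sign. The target of iii) is $\Aut(V_\lambda(B,A))/\langle -I\rangle$, not $\PGL(V_\lambda(B,A))$, so the kernel of iii) genuinely detects the scalar part of $\theta_\lambda^{\alpha_B}$ modulo $\pm1$; your middle equivalence, that the scalar is ``dictated by $\tilde\varepsilon^{1/2}(s)$'', is exactly what must be proved. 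Your justification --- that $\varepsilon$ is a power of $\det\theta_\lambda^{\alpha_B}$ --- is not established anywhere and is not how $\varepsilon$ is defined: it is the finite-order part of the determinant of the rank-two system $V_\lambda(B)^{\alpha_B}$, not of $\theta_\lambda^{\alpha_B}$, and a determinant comparison would in any case only pin the scalar down up to an $nd$-th root of unity. The paper closes this step with real arithmetic input: either Pyle's formula $\varepsilon(s)=\alpha_B(s)^2/d(\mu_s)$, which for $s\in G_{K_0}$ (where $\mu_s=\mathrm{id}$ and $d(\mu_s)=1$) gives $\tilde\varepsilon^{1/2}\otimes\tilde\theta_\lambda^{\alpha_B}(s)=\mathrm{id}$ directly; or a Chebotarev argument combining Ribet's relation $a_\p^2/\varepsilon_\p=a_\p\overline{a}_\p\in M$ with the trace identity $a_\p b_\p=c_\p\in M$ to force $\zeta_\p^2\varepsilon_\p=1$ for the root of unity $\zeta_\p$ with $b_\p=nd\,\zeta_\p$. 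Without one of these inputs the inclusion $\ker(\mathrm{ii})\subseteq\ker(\mathrm{iii})$ does not follow.
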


\begin{proof}
By Faltings isogeny theorem, as in the proof of Proposition \ref{proposition: noiso}, we have that $K_0/k$ is the minimal extension of $k$ such that
$$
\End_{G_{K_0}}(V_\lambda(A)\otimes \Qbar_\ell)\simeq \M_{nd}(\Qbar_\ell)\,.
$$
Let $K'/k$ be an arbitrary finite extension. By Proposition \ref{proposition: decomposition Tate}, we have
$$
\begin{array}{lll}
\End_{G_{K'}}(V_\lambda(A)) & \simeq & \displaystyle{\End_{G_{K'}}(V_{\lambda}(B)^{\alpha_B})\otimes V_{\lambda}(B,A)^{\alpha_B} )}\\[6pt]
& \simeq & \displaystyle{\Hom_{G_{K'}}(V_{\lambda}(B)^{\alpha_B}\otimes V_{\lambda}(B)^{\alpha_B,\vee}, V_{\lambda}(B,A)^{\alpha_B}\otimes V_{\lambda}(B,A)^{\alpha_B,\vee} )}\\[6pt] 
& \simeq & \displaystyle{\left(V_{\lambda}(B,A)^{\alpha_B} \otimes V_{\lambda}(B,A)^{\alpha_B,\vee}\right)^{G_{K'}}}\,, 
\end{array}
$$
where in the last isomorphism we have used that $\End_{G_{K'}}(V_{\lambda}(B)^{\alpha_B})\simeq \Qbar_\ell$, as stated in part $ii)$ of Proposition \ref{proposition: Ribet}. This shows that the field extensions of~$i)$ and~$ii)$ coincide. In fact, we could have alternatively shown the equivalence between $i)$ and~$ii)$, by establishing the isomorphism
$$
V_\lambda(B,A)^{\alpha_B}\otimes V_\lambda(B,A)^{\alpha_B,\vee}\simeq \End(A_{K_0})\otimes_{M,\sigma} \Qbar_\ell
$$
of $\Qbar_\ell[G_k]$-modules (in the same lines as in the proof of Proposition \ref{proposition: decomposition Tate}).

Let $L$ denote the field extension cut out by $\tilde\varepsilon^{1/2}\otimes\tilde\theta_\lambda^{\alpha_B}$. We first show that $K_0\subseteq L$. Indeed, for every $s \in G_L$, we have that $\theta_\lambda^{\alpha_B}(s)$ is a scalar diagonal matrix. Thus $\theta_\lambda^{\alpha_B}\otimes \theta_\lambda^{\alpha_B,\vee}(s)$ is trivial, and then by $ii)$ we deduce that $s\in G_{K_0}$.

We will give two different proofs of the fact that $L\subseteq K_0$. 
For $s\in G_k$, let $d(\mu_s)$ denote the ``degree" of $\mu_s$ as defined on \cite[p. 223]{Pyl02}. As shown in \cite[Thm. 5.12]{Pyl02}, for $s\in G_k$, we have that\footnote{Beware that $c_B$ is the inverse of the $2$-cocycle chosen by Pyle.}
$$
\varepsilon(s)=\frac{\alpha_B(s)^2}{d(s)}\,.
$$
Let now $\varphi \in V_\lambda(B,A)$ and $s\in G_{K_0}$. Since $\mu_s$ is the identity and $d(s)=1$, we find that
$$
\tilde \varepsilon^{1/2}\otimes\tilde \theta_\lambda^{\alpha_B}(s)(\varphi)=\alpha_B(s)\cdot \acc s\varphi\circ \mu_s^{-1}\otimes \alpha_B(s)^{-1}=\varphi\,,
$$ 
which gives the first proof of the fact that $G_{K_0}\subseteq G_L$.

As for the second proof, let $s\in G_{K_0}$ so that $\theta_\lambda^{\alpha_B}(s)$ is a scalar matrix. We claim that $\tilde\theta_\lambda^{\alpha_B}(s)$ and $\tilde\varepsilon^{-1/2}(s)$ coincide as elements in $F^\times/\langle -1\rangle$. 
 
By the Chebotarev density theorem it is enough to show the claim when $s$ is of the form $\Frob_\p$, for some prime $\p$ of $k$ of good reduction for $A$. 
To shorten notation let us write
$$
a_\p=\Tr(V_{\lambda}(B)^{\alpha_B}(\Frob_\p)),\,\, b_\p=\Tr(V_{\lambda}(B,A)^{\alpha_B}(\Frob_\p)),\,\, c_\p=\Tr(V_{\lambda}(A))(\Frob_\p)\,.
$$
To prove the claim we may even restrict to primes $\p$ for which $a_\p$ is nonzero, since the density of those for which $a_\p=0$ is zero (this may be seen by applying the argument of \cite[Ex. 2, p. IV-13]{Ser89} to $V_{\lambda}(B)^{\alpha_B}$; see also \cite{Ser81}).
Recall that by \cite[Thm. 5.3]{Rib92} (see also \cite[Prop. 2.2.14]{Wu11}), we have that
\begin{equation}\label{equation: Ribagain}
\frac{a_\p^2}{\varepsilon_\p}=a_\p\overline a_\p \in M\,,
\end{equation}
where $\varepsilon_\p:=\varepsilon(\Frob_\p)$ and $\overline \cdot$ denotes the ``complex conjugation" in $F$.
By Theorem~\ref{theorem: inddec}, we have that $a_\p b_\p=c_\p\in M$. From this and \eqref{equation: Ribagain}, we see that
$$
b_\p^2\varepsilon_\p=\frac{c_\p^2\varepsilon_\p}{a_\p^2}=\frac{c_\p^2}{a_\p\overline a_\p}
$$
is a totally positive element of the totally real field $M$. The assumption that $\Frob_\p\in G_{K_0}$ implies that $b_\p=nd\zeta_\p$ for some root of unity $\zeta_\p$. We deduce that $\zeta_\p^2\varepsilon_\p=1$. This shows that $\tilde\theta_{\lambda_i}^{\alpha_B}(\Frob_\p)$ and $\tilde\varepsilon^{-1/2}(\Frob_\p)$ coincide as elements in $F^\times/\langle -1\rangle$ and the second proof of the inclusion $L\subseteq K_0$ is complete.
\end{proof}

\begin{proposition}
The field cut out by the representation
$$
\theta_\lambda^{\alpha_B} \colon G_{k}\rightarrow \GL(V_\lambda(B,A)) 
$$
is an extension  of degree at most 2 of $k_\varepsilon K_0/k$.
\end{proposition}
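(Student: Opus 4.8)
The plan is to establish two facts about the field $L/k$ cut out by $\theta_\lambda^{\alpha_B}$ (so that $G_L=\ker\theta_\lambda^{\alpha_B}$): (1) $k_\varepsilon K_0\subseteq L$, and (2) $\theta_\lambda^{\alpha_B}(G_{k_\varepsilon K_0})\subseteq\{\pm I\}$. By (1) we have $G_L\subseteq G_{k_\varepsilon K_0}$, and then $[L:k_\varepsilon K_0]=[G_{k_\varepsilon K_0}:\ker\theta_\lambda^{\alpha_B}]$ equals the order of $\theta_\lambda^{\alpha_B}(G_{k_\varepsilon K_0})$, which by (2) is at most $2$.

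The only input is Proposition~\ref{proposition: K0}, from which I extract the behaviour of $\theta_\lambda^{\alpha_B}$ on $G_{K_0}$. Since $K_0/k_0$ is cut out by $\theta_\lambda^{\alpha_B}\otimes\theta_\lambda^{\alpha_B,\vee}$, for $s\in G_{K_0}$ the matrix $\theta_\lambda^{\alpha_B}(s)$ centralizes $\GL(V_\lambda(B,A))$ and is therefore a scalar, say $\theta_\lambda^{\alpha_B}(s)=\zeta(s)I$ with $\zeta(s)\in\Qbar_\ell^\times$; and since $K_0/k_0$ is \emph{also} cut out by $\tilde\varepsilon^{1/2}\otimes\tilde\theta_\lambda^{\alpha_B}$, comparing the images of $\zeta(s)I$ and of $\tilde\varepsilon^{1/2}(s)^{-1}$ in $\Aut(V_\lambda(B,A)/\langle -I\rangle)$ gives $\zeta(s)^2=\varepsilon_\lambda(s)^{-1}$, in the notation of Proposition~\ref{proposition: Ribet}. (Alternatively this follows from $\theta_\lambda^{\alpha_B}(s)=\sigma(\alpha_B(s))^{-1}I$ and Pyle's identity $\varepsilon(s)=\alpha_B(s)^2/d(\mu_s)$ with $d(\mu_s)=1$, exactly as in the proof of Proposition~\ref{proposition: K0}.)

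Granting this, (1) and (2) are immediate. For (1): if $s\in\ker\theta_\lambda^{\alpha_B}$, then $\theta_\lambda^{\alpha_B}(s)\otimes\theta_\lambda^{\alpha_B,\vee}(s)=I$, so $s\in G_{K_0}$ by Proposition~\ref{proposition: K0}; now $\zeta(s)=1$, hence $\varepsilon_\lambda(s)=\zeta(s)^{-2}=1$, hence $s\in G_{k_\varepsilon}$, since $k_\varepsilon$ is the field cut out by $\varepsilon$. Thus $G_L\subseteq G_{K_0}\cap G_{k_\varepsilon}=G_{k_\varepsilon K_0}$. For (2): if $s\in G_{k_\varepsilon K_0}$, then $s\in G_{K_0}$ gives $\theta_\lambda^{\alpha_B}(s)=\zeta(s)I$ with $\zeta(s)^2=\varepsilon_\lambda(s)^{-1}$, while $s\in G_{k_\varepsilon}$ gives $\varepsilon_\lambda(s)=1$; hence $\zeta(s)^2=1$ and $\theta_\lambda^{\alpha_B}(s)=\pm I$.

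I do not expect a genuine obstacle: the statement is bookkeeping on top of Proposition~\ref{proposition: K0}. The only delicate point is the passage between $\varepsilon$ and $\tilde\varepsilon^{1/2}$, and between $\theta_\lambda^{\alpha_B}$ and $\tilde\theta_\lambda^{\alpha_B}$, where the $\pm1$ ambiguities must be tracked; working throughout with the squared identity $\varepsilon_\lambda(s)\zeta(s)^2=1$ on $G_{K_0}$ keeps this unambiguous.
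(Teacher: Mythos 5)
Your proof is correct and is essentially the paper's argument: the paper likewise deduces everything from Proposition \ref{proposition: K0} $ii)$ and $iii)$, writing $\tilde\theta_\lambda^{\alpha_B}\simeq \tilde\varepsilon^{-1/2}\otimes(\tilde\varepsilon^{1/2}\otimes\tilde\theta_\lambda^{\alpha_B})$ to conclude that the field cut out by the projectivization $\tilde\theta_\lambda^{\alpha_B}$ is exactly $k_\varepsilon K_0$, and then passes to the at-most-quadratic lift. Your identity $\varepsilon_\lambda(s)\zeta(s)^2=1$ on $G_{K_0}$ is just an unpacked form of that same twisting relation, so the two proofs coincide in substance.
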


\begin{proof}
It will suffice to show that the field extension $L'/k$ cut out by $\tilde \theta_\lambda^{\alpha_B}$ is $k_\varepsilon K_0$.  We have that
$$
\tilde\theta_\lambda^{\alpha_B}\simeq \tilde\varepsilon^{-1/2}\otimes(\tilde\varepsilon^{1/2}\otimes \tilde\theta_\lambda^{\alpha_B})\,.
$$
First note that $K_0 \subseteq L'$. Then, by Proposition \ref{proposition: K0}, we have that $L'/K_0$ is the minimal extension cut out by $\tilde\varepsilon^{-1/2}|_{G_{K_0}}$. The proposition now follows from the fact that $k_\varepsilon$ is also the field cut out by $\tilde\varepsilon^{-1/2}$ 
\end{proof}

\begin{definition}\label{definition: H} Let $\tilde H$ denote the (isomorphic) image of the Galois group $\Gal(K_0/k)$ by the representation $\tilde\varepsilon^{1/2}\otimes \tilde\theta_\lambda^{\alpha_B}$. We will denote by $H$ the preimage of $\tilde H$ by the projection map 
$$
\GL(V_\lambda(B,A))\rightarrow \GL(V_\lambda(B,A))/\langle -I\rangle\,.
$$
\end{definition}
Recall the embeddings $\sigma_i\colon F\rightarrow \Qbar_\ell$, for $i=1,\dots,m$, obtained as extensions to~$F$ of the distinct embeddings of $M$ into $\Qbar_\ell$. They define primes $\lambda_i$ of $F$. 
Write $\varepsilon_i$ for $\iota \circ \varepsilon_{\lambda_i}$ and $\theta_i^{\alpha_B}$ for $\iota\circ \theta_{\lambda_i}^{\alpha_B}$. Let $\varepsilon_i^{1/2}$ denote an arbitrary square root of $\varepsilon_i$. Note that the map $\varepsilon_i^{1/2}$ will not be in general a character. We set
$$
\prod_{i= 1}^{m}\SU(2)^{(i)}\otimes H:=\left\{  (g_i \otimes (\varepsilon_i^{1/2}\otimes \theta_i^{\alpha_B})(h))_i \mid g_i \in \SU(2),\,h\in \Gal(K_0/k)\right\}\,. 
$$
Since $-I$ belongs to $\SU(2)$, this definition does not depend on the choice of the square root $\varepsilon_i^{1/2}$.

\begin{proposition}\label{proposition: STA}
Up to conjugacy, $\ST(A)$ is the subgroup
$$
\prod_{i= 1}^{m}\SU(2)^{(i)}\otimes H\subseteq \USp(2g)\,.
$$
In particular, we have:
\begin{enumerate}[i)]
\item The identity component $\ST(A)^0$ of $\ST(A)$ satisfies
$$
\ST(A)^0\simeq \ST(A_{K_0})\simeq 
\underbrace{\SU(2) \times \cdots \times \SU(2)}_{m}\,.
$$
\item The group of connected components $\pi_0(\ST(A))$ of $\ST(A)$ is isomorphic to $\Gal(K_0/k)$.
\end{enumerate}
\end{proposition}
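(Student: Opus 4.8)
The plan is to read off the Sato--Tate group from the tensor decomposition of Theorem~\ref{theorem: inddec} (equivalently Proposition~\ref{proposition: tdt}), using what we already know about its two factors. Fix an embedding $\iota\colon\Qbar_\ell\to\C$ and a prime $\lambda$ of $F$ above $\ell$. After base change along $\iota$ and a choice of symplectic basis of $V_\ell(A)\otimes\C$ adapted to $\bigoplus_{i=1}^m V_{\lambda_i}(B)^{\alpha_B}\otimes V_{\lambda_i}(B,A)^{\alpha_B}$, the representation $\varrho_\ell$ becomes the block sum $\bigoplus_i\varrho_{\lambda_i}^{\alpha_B}\otimes\theta_{\lambda_i}^{\alpha_B}$, so that $G_\ell^{\Zar}(A)$, $G_\ell^{\Zar,1}(A)$, and hence $\ST(A)$, are all governed blockwise by these factors. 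Since by Lemma~\ref{lemma: STB} every normalized $\varrho_{\lambda_i}^{\alpha_B}(s)$ has the form $g_i\,\varepsilon_i^{1/2}(s)$ with $g_i\in\SU(2)$, and since by Proposition~\ref{proposition: K0} the element $\varepsilon_i^{1/2}(s)\theta_i^{\alpha_B}(s)$, taken modulo $\langle -I\rangle$, depends only on the image $h$ of $s$ in $\Gal(K_0/k)$ (the sign ambiguity being absorbed by $-I\in\SU(2)$), one checks directly that $\varrho_\ell(s)$ lies in the group $\prod_{i=1}^m\SU(2)^{(i)}\otimes H$, which gives the inclusion $\ST(A)\subseteq\prod_{i=1}^m\SU(2)^{(i)}\otimes H$ for free.

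The next step is to compute $\ST(A_{K_0})$ and, through it, the identity component. Restricting the block decomposition to $G_{K_0}$, the claim established inside the proof of Proposition~\ref{proposition: K0} gives $\theta_{\lambda_i}^{\alpha_B}|_{G_{K_0}}=\pm\varepsilon_i^{-1/2}\cdot I$, so for $s\in G_{K_0}$ the normalized $i$-th block of $\varrho_\ell(s)$ is $g_i\,\varepsilon_i^{1/2}(s)\cdot(\pm\varepsilon_i^{-1/2}(s))=\pm g_i\in\SU(2)$; hence $\ST(A_{K_0})\subseteq\prod_i\SU(2)^{(i)}$. Conversely, after a finite extension trivializing the finite twists, the $i$-th block is $\varrho_{\lambda_i}^{\alpha_B}$ up to a scalar, so by the strong irreducibility of Proposition~\ref{proposition: Ribet}~$iii)$ and Lemma~\ref{lemma: STB} the group $\ST(A_{K_0})$ surjects onto each factor $\SU(2)$. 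Moreover no two of the $V_{\lambda_i}(B)^{\alpha_B}$ become isomorphic up to a character twist over any finite extension of $k$: comparing determinants via Proposition~\ref{proposition: Ribet}~$ii)$, such a twist would be of finite order and hence killed by a further finite extension, contradicting Proposition~\ref{proposition: noiso}. By Goursat's lemma, in the form used in the proof of Lemma~\ref{lemma: STB}, together with the simplicity of $\SU(2)/\langle -I\rangle$, a connected compact subgroup of $\prod_i\SU(2)$ that surjects onto every factor and admits no diagonal identification between distinct factors must be the whole product; therefore $\ST(A_{K_0})=\prod_i\SU(2)^{(i)}\cong\SU(2)^m$. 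As this group is connected and $K_0/k$ is finite, it coincides with $\ST(A)^0$, which is part~$i)$.

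For the reverse inclusion and part~$ii)$, I would show that $\ST(A)$ meets every connected component of $\prod_i\SU(2)^{(i)}\otimes H$. The component group of the latter is $\Gal(K_0/k)$: its identity component is the full-dimensional connected subgroup $\prod_i\SU(2)^{(i)}$ (the ``$h=1$'' layer), and distinct $h\in\Gal(K_0/k)$ fall in distinct components because $\tilde\varepsilon^{1/2}\otimes\tilde\theta_\lambda^{\alpha_B}$ is injective on $\Gal(K_0/k)$ (Definition~\ref{definition: H} and Proposition~\ref{proposition: K0}). On the other hand, by the Chebotarev density theorem the images of the normalized Frobenius classes lie in $\ST(A)$ and realize every $h\in\Gal(K_0/k)$ as their component label; so $\ST(A)$ meets every component. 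Combined with $\ST(A)^0=\prod_i\SU(2)^{(i)}$ from the previous paragraph, this forces $\ST(A)=\prod_{i=1}^m\SU(2)^{(i)}\otimes H$, and hence $\pi_0(\ST(A))\cong\Gal(K_0/k)$, which is part~$ii)$.

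The main obstacle will be the rigidity input in the second paragraph: ruling out, over \emph{every} finite extension of $k$, any diagonal relation among the $m$ Galois-conjugate rank-$2$ systems $V_{\lambda_i}(B)^{\alpha_B}$. This is precisely where Proposition~\ref{proposition: noiso} is indispensable, and it must be combined with the reduction ``isomorphic up to a character twist $\Rightarrow$ isomorphic after a finite base change'', which in turn rests on the shape of the determinant in Proposition~\ref{proposition: Ribet}~$ii)$ forcing the twisting character to have finite order. Once this is secured, the blockwise reduction of the monodromy, the normalization computation over $K_0$, and the identification of the finite component group with $\Gal(K_0/k)$ are essentially bookkeeping built on the results preceding Definition~\ref{definition: H}.
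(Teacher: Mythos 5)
Your proposal is correct and follows essentially the same route as the paper: injection of $\ST(A)$ into $\prod_i\SU(2)^{(i)}\otimes H$ via the tensor decomposition and Lemma~\ref{lemma: STB}, surjectivity via Goursat's lemma combined with strong irreducibility and Proposition~\ref{proposition: noiso}, and the component group via Proposition~\ref{proposition: K0}. Your write-up is in fact slightly more careful than the paper's at one point, namely in reducing ``isomorphic up to a character twist'' to a genuine isomorphism over a finite extension by using the determinant shape from Proposition~\ref{proposition: Ribet}~$ii)$ to force the twist to have finite order.
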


\begin{proof}
Proposition \ref{proposition: tdt} and Lemma \ref{lemma: STB} imply that there is an injection
$$
\varphi\colon \ST(A)\hookrightarrow \prod_{i= 1}^{m}\SU(2)^{(i)}\otimes H\,.
$$
That the projection of $\varphi$ onto the $i$-th factor $\SU(2)^{(i)}\otimes H$ is surjective is again an application of Goursat's lemma (as in the proof of Lemma \ref{lemma: STB}). Since the $V_{\lambda_i}(B)^{\alpha_B}$ are strongly irreducible, the lack of surjectivity of $\varphi$ would translate into the existence of an isomorphism $V_{\lambda_i}(B^{\alpha_B})\simeq V_{\lambda_j}(B^{\alpha_B})$ as $\Qbar_\ell[G_{K'}]$-modules for some $i\not = j$ and some finite extension $K'/k$. This contradicts Proposition~\ref{proposition: noiso}.

The statement regarding the group of components is an immediate consequence of Proposition \ref{proposition: K0}.
\end{proof}

\textbf{Frobenius conjugacy classes.} Let $S$ denote a finite set of primes of $k$ containing the primes of bad reduction for $A$ and the primes of bad reduction for the variety $A^{\alpha_B}$ given by Proposition \ref{proposition: Wu}. Let $\varrho_i^{\alpha_B}$ stand for $\iota\circ \varrho_{\lambda_i}^{\alpha_B}$. For $\mathfrak{p} \not\in S$, let $x_\p$ be the conjugacy class in $\ST(A)$ of $( g_{i,\p}\otimes h_{i,\p})_i$, where 
$$
g_{i,\p}:=\Nm(\p)^{-1/2}\cdot \varepsilon_i^{-1/2}\otimes\varrho_{i}^{\alpha_B}(\Frob_\p)\,, \qquad h_{i,\p}:=\varepsilon_i^{1/2}\otimes \theta_i^{\alpha_B}(\Frob_{\p})\,,
$$
for an arbitrary choice of square root $\varepsilon_i(\Frob_\p)^{1/2}$ (note that the Kronecker product $g_{i,\p}\otimes h_{i,\p}$ does not depend on this choice). We will simply write $h_\p$ to denote $h_{1,\p}$. We have an equality of characteristic polynomials
$$
\Char(x_\p)=\Char\left(\Nm(\p)^{-1/2} \varrho_\ell(\Frob_\p)\right).
$$

\textbf{Sato--Tate conjecture.} In our specific situation, the general Sato--Tate conjecture (see \cite[\S2.1]{FKRS12}, \cite[Chap. 8]{Ser12}) takes the following form.
\begin{conjecture}[Sato--Tate conjecture for $A$]\label{conjecture: ST}
The sequence $\{x_\p\}_{\p\not \in S}$, where the primes $\p$ are ordered with respect to their absolute norm, is equidistributed on the set of conjugacy classes of $\ST(A)$ with respect to the projection on this set of the Haar measure of $\ST(A)$.
\end{conjecture}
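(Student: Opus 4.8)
The plan is to prove Conjecture~\ref{conjecture: ST} in the cases covered by Theorem~\ref{theorem: 2}, i.e. assuming $k/k_0$ trivial (so that $A=A_0$, $M_0=M$, and the induction in Theorem~\ref{theorem: inddec} disappears), $K_0/k_0$ solvable, and $m=[M:\Q]\leq 2$; for a finite Galois $E/k_0$ these hypotheses are inherited by $A_{0,E}$. By Serre's equidistribution criterion \cite{Ser89}, the asserted equidistribution of $\{x_\p\}_{\p\notin S}$ follows once one shows that for every irreducible representation $\varrho$ of $G:=\ST(A_{0,E})$ the partial Euler product
$$
L(\varrho,s)=\prod_{\p\notin S}\det\bigl(1-\varrho(x_\p)\Nm(\p)^{-s}\bigr)^{-1}
$$
extends to a holomorphic and non-vanishing function on $\Re(s)\geq 1$ when $\varrho$ is nontrivial, and has there only a simple pole at $s=1$ when $\varrho$ is trivial. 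By Corollary~\ref{corollary: STA0} and \eqref{equation: STAE}, $G\simeq \SU(2)^m\rtimes\Gal(K_0E/E)$, and $\Gamma:=\Gal(K_0E/E)$ is solvable, being a subgroup of $\Gal(K_0/k_0)$; the identity component of $G$ is $\SU(2)^m$, whose irreducible representations are the external tensor products $\Symm^{e_1}\otimes\cdots\otimes\Symm^{e_m}$.

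First I would reduce, via Clifford theory together with the solvability of $\Gamma$, every $L(\varrho,s)$ to a manageable shape. An irreducible $\varrho$ is induced from a twist $W\otimes\tau$ of an $\SU(2)^m$-irreducible $W=\Symm^{e_1}\otimes\cdots\otimes\Symm^{e_m}$ by a (possibly projective) representation $\tau$ of the stabilizer of $W$ in $\Gamma$; since $\Gamma$ is solvable, $\tau$ is monomial, so up to finitely many Euler factors $L(\varrho,s)$ is an $L$-function over a \emph{solvable} extension $E'/E$ (a subextension of $K_0E/E$) of
$$
\bigl(\Symm^{e_1}(V_{\lambda_1}(B)^{\alpha_B})\otimes\cdots\otimes\Symm^{e_m}(V_{\lambda_m}(B)^{\alpha_B})\bigr)\otimes\chi,
$$
where $\chi$ is a finite-order Hecke character of $E'$ incorporating the monomial datum of $\tau$ and the character $\varepsilon$ of Proposition~\ref{proposition: STA}. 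That the factors $\Symm^{e_i}(V_{\lambda_i}(B)^{\alpha_B})$ really occur inside tensor powers of $V_\ell(A_0)\otimes\Qbar_\ell$ is exactly the content of Theorem~\ref{theorem: inddec} with $k=k_0$; by Proposition~\ref{proposition: Ribet} each of the $m$ rank-$2$ systems $(V_{\lambda_i}(B)^{\alpha_B})_\lambda$ is pure, regular, totally odd and strongly irreducible, and by Proposition~\ref{proposition: noiso} they are pairwise non-isomorphic over every finite extension of $k$.

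Next I would establish invertibility of the displayed $L$-function on $\Re(s)\geq 1$. Applying Theorem~\ref{theorem: automorphy} to the finite list of exponents $e_i$ that occur, with $k^{*}$ the compositum of $K_0E$, $E'$, and the fixed field of $\chi$, yields a totally real, $\Q$-Galois extension $k'/k$ that is linearly disjoint from $k^{*}$ over $k$ and over which every $\Symm^{e_i}(V_{\lambda_i}(B)^{\alpha_B}|_{G_{k'}})$ is automorphic, hence cuspidal on $\GL_{e_i+1}$ by strong irreducibility. Over $k'$ the displayed tensor product becomes the Rankin--Selberg product of these cuspidal representations; for $m=1$ this is a single cuspidal $L$-function (holomorphic and non-vanishing on $\Re(s)\geq 1$ by classical results), and for $m=2$ a genuine Rankin--Selberg $L$-function, whose non-vanishing on $\Re(s)=1$ and holomorphy on $\Re(s)\geq 1$ — away from a possible simple pole, which by the non-isomorphism of Proposition~\ref{proposition: noiso} and a central-character computation occurs precisely on the trivial constituent of $\varrho$ — are Shahidi's theorems \cite{Sha81}; this is exactly where $m\leq 2$ is used. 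Twisting by the finite-order character $\chi$ preserves these analytic properties. Finally one descends from $k'$ back to $E'$, and from $E'$ to $E$: since $k'/\Q$ is Galois, $k'/k$ is linearly disjoint from $k^{*}$, and $E'/E$ (like $K_0E/E$) is solvable, the descent is carried out by cyclic base change and automorphic induction for $\GL_n$, together with a further Brauer decomposition of the finite-image part of $\varrho$ through the solvable group $\Gal(K_0/k_0)$ — never leaving the class of $L$-functions whose invertibility on $\Re(s)\geq 1$ has been secured.

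The hard part will be the bookkeeping of the last step: orchestrating the Clifford theory of $\ST(A_{0,E})$, the base-change descent from the potentially automorphic field $k'$ down to $E$, and the twist by $\varepsilon$, so that one is never confronted with an $L$-function more complicated than the Rankin--Selberg convolution of two cuspidal automorphic representations of general linear groups — which is precisely why the argument is confined to $m\leq 2$, the case $m\geq 3$ requiring holomorphy and non-vanishing of triple-product and higher $L$-functions that are not known. A secondary technical point is to convert the representation-theoretic non-isomorphism of Proposition~\ref{proposition: noiso} into the automorphic statement that the cuspidal representations $\Symm^{e_i}(V_{\lambda_i}(B)^{\alpha_B})$ are pairwise non-twist-equivalent, so that the poles of their Rankin--Selberg products are exactly the expected ones.
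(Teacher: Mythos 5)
Your proposal is correct and follows essentially the same route as the paper's own proofs of Theorems~\ref{theorem: powersecs} and~\ref{theorem: powerssurfs} (which establish Conjecture~\ref{conjecture: ST} exactly in the cases you treat): Serre's invertibility criterion, the tensor decomposition of Theorem~\ref{theorem: inddec} identifying $L^S(\Symm^{e_1}\otimes\cdots\otimes\Symm^{e_m}\otimes\eta,A_0,s)$ with the $L$-function of $\Rr_{e_1}\otimes\cdots\otimes\overline\Rr_{e_m}\otimes\eta_{e_1+\cdots+e_m}$, Brauer induction on the finite-image part, potential automorphy via Theorem~\ref{theorem: automorphy} followed by solvable base change and descent, and Shahidi's Rankin--Selberg results for $m=2$ with non-duality supplied by Proposition~\ref{proposition: noiso}. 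The only cosmetic difference is your framing of the finite part via Clifford theory and monomiality of solvable groups, where the paper packages the twist by $\varepsilon$ into the Artin representation $\eta_e$ of Lemma~\ref{lemma: eta_e} and then applies Brauer induction directly.
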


Let $\varrho$ be an irreducible representation of $\ST(A)$. For $s\in \C$ with $\Re(s)> 1$, define the partial Euler product 
$$
L^S(\varrho, A, s)=\prod_{\p\not \in S}\det(1-\varrho(x_{\p})\Nm(\p)^{-s})^{-1}\,.
$$
By \cite[App. to Chap. I]{Ser89}, Conjecture \ref{conjecture: ST} holds if for every irreducible nontrivial representation~$\varrho$, the partial Euler product $L^S(\varrho,A,s)$ is invertible, that is, it extends to a holomorphic and nonvanishing function on a neighborhood of $\Re(s)\geq 1$.

\textbf{Irreducible representations of $\ST(A)$.} In the next section we will prove Conjecture \ref{conjecture: ST} in certain cases when $k_0=k$ and $[M:\Q]\leq 2$. Let us describe the type of partial Euler products that one finds in this setting. Let $H$ be as in Definition~\ref{definition: H}.

If $[M:\Q]=1$, then $k=k_0$, and we see from Proposition \ref{proposition: STA}, that the irreducible representations of $\ST(A)$ are of the form $\Symm^e\otimes \eta$, where $e$ is an integer $\geq 0$, $\Symm^e$ is the $e$-th symmetric power of the standard representation of $\SU(2)$ and $\eta$ is an irreducible representation of $H$ such that
\begin{align}\label{eq: eta(-1)}
  \eta(-I)=(-I)^e.
\end{align}
We then have
\begin{equation}\label{equation: Lfunctype}
L^S(\Symm^e\otimes \eta, A, s)=\prod_{\p\not \in S}\det(1-\Symm^e(g_{1,\p})\otimes \eta(h_{\p})\Nm(\p)^{-s})^{-1}\,.
\end{equation}

Suppose that $[M:\Q]=2$ and that $k=k_0$. Then the irreducible representations of $\ST(A)$ are of the form
\begin{equation}\label{equation: reptyp2}
\Symm^{e_1}\otimes \Symm^{e_2} \otimes \eta\,,
\end{equation}
where $e_1,e_2$ are integers $\geq 0$, and $\eta$ is an irreducible representation of $H$ such that $\eta(-I)=(-I)^{e_1+e_2}$.

For every $e\geq 0$, we next attach to any representation $\eta$ as above an Artin representation $\eta_e$ that will be used in \S\ref{section: scenarios} to link the partial Euler products described above to the partial Euler products of the compatible systems $(V_\lambda(B)^{\alpha_B})_\lambda$.

\begin{lemma}\label{lemma: eta_e}
Let $\eta:H\ra \GL(V)$ be a complex representation such that $\eta(-I)=(-I)^ e$.
For every $s\in G_{k}$, fix a choice $\varepsilon_i^{{1}/{2}}(s)$ of a square root of $\varepsilon_i(s)$. Then the map
  \begin{align*}
     \eta_e\colon G_{k}\ra \GL(V), \qquad  \eta_e(s) :=\varepsilon_i(s)^{-{e}/{2}}\otimes \eta(\varepsilon_i^{{1}/{2}}(s)\otimes\theta^{\alpha_B}_i (s))
  \end{align*}
  is a representation. Moreover, it factors through an extension $K_e$ of degree at most~$2$ of $K_0 k_\varepsilon$.
\end{lemma}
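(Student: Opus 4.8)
The plan is to verify directly that $\eta_e$ is multiplicative, using the hypothesis $\eta(-I)=(-I)^e$ to absorb the ambiguity coming from the (non-multiplicative) choice of square roots $\varepsilon_i^{1/2}(s)$. First I would record the key defect: for $s,t\in G_k$ the element
$$
c(s,t):=\varepsilon_i^{1/2}(s)\,\varepsilon_i^{1/2}(t)\,\varepsilon_i^{1/2}(st)^{-1}\in\{\pm 1\}
$$
is a $\{\pm1\}$-valued $2$-cochain, simply because $\varepsilon_i$ itself is a genuine character. Then $\varepsilon_i^{1/2}(s)\otimes\theta_i^{\alpha_B}(s)$ composed with $\varepsilon_i^{1/2}(t)\otimes\theta_i^{\alpha_B}(t)$ differs from $\varepsilon_i^{1/2}(st)\otimes\theta_i^{\alpha_B}(st)$ by the scalar $c(s,t)\otimes 1$, i.e. by $\pm(I\otimes I)$ inside $\Aut(V_\lambda(B,A))$; since $\theta_i^{\alpha_B}$ is a genuine representation, this is the only source of failure. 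Applying $\eta$ and using $\eta(-I)=(-I)^e$, the product $\eta(\varepsilon_i^{1/2}(s)\otimes\theta_i^{\alpha_B}(s))\,\eta(\varepsilon_i^{1/2}(t)\otimes\theta_i^{\alpha_B}(t))$ equals $c(s,t)^e\,\eta(\varepsilon_i^{1/2}(st)\otimes\theta_i^{\alpha_B}(st))$. On the other hand the scalar prefactors satisfy $\varepsilon_i(s)^{-e/2}\varepsilon_i(t)^{-e/2}=c(s,t)^{-e}\varepsilon_i(st)^{-e/2}$ (here $\varepsilon_i(s)^{-e/2}$ means $(\varepsilon_i^{1/2}(s))^{-e}$, using the \emph{same} chosen square root), and since $c(s,t)^{-e}=c(s,t)^{e}$ the two defects cancel, giving $\eta_e(s)\eta_e(t)=\eta_e(st)$.

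Next I would identify the field through which $\eta_e$ factors. By construction $\eta_e(s)$ depends on $s$ only through $\varepsilon_i^{1/2}(s)\otimes\theta_i^{\alpha_B}(s)$ (the scalar $\varepsilon_i(s)^{-e/2}$ is itself a function of that data, being its $(-e)$-th power in the $\mu$-component) together with the further scalar which is a square root of a character. Concretely: $\eta_e$ is trivial on $s$ precisely when $\tilde\varepsilon^{1/2}(s)\otimes\tilde\theta_\lambda^{\alpha_B}(s)$ is trivial modulo $\langle -I\rangle$ \emph{and} an auxiliary sign vanishes. By Proposition \ref{proposition: K0}, the field cut out by $\tilde\varepsilon^{1/2}\otimes\tilde\theta_\lambda^{\alpha_B}$ is exactly $K_0$; hence $\eta_e$ factors through $G_{k}\to\Gal(K_e/k)$ where $K_e/K_0$ is the (at most quadratic) extension cut out by the residual $\{\pm1\}$-valued cocycle obstruction, which is in turn dominated by the field cut out by a square root of the finite-order character $\varepsilon$, namely $k_\varepsilon$. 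Thus $K_e\subseteq K_0 k_\varepsilon\cdot(\text{quadratic})$, and $[K_e:K_0k_\varepsilon]\le 2$ as claimed; the factor of $2$ is exactly the ambiguity in choosing $\varepsilon_i^{1/2}$ consistently, which cannot always be done over $K_0k_\varepsilon$ itself.

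I expect the only delicate point to be bookkeeping the square-root conventions: one must fix \emph{once and for all} the family $\{\varepsilon_i^{1/2}(s)\}_{s}$ and check that $\varepsilon_i(s)^{-e/2}$ in the definition is interpreted as $(\varepsilon_i^{1/2}(s))^{-e}$ with that same choice, so that the cancellation $c(s,t)^{-e}\cdot c(s,t)^{e}=1$ is literal rather than merely "up to sign". The hypothesis $\eta(-I)=(-I)^e$ is precisely what makes the $\pm1$-ambiguities in the $\theta$-part and in the scalar part cancel against each other; once that identity is in place the computation is a short formal manipulation, and the statement about $K_e$ is then a direct consequence of Proposition \ref{proposition: K0} and the fact that $k_\varepsilon$ is the field cut out by $\tilde\varepsilon^{1/2}$.
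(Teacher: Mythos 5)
Your proof is correct and follows essentially the same approach as the paper: you introduce the $\{\pm1\}$-valued cochain $c(s,t)=\varepsilon_i^{1/2}(s)\varepsilon_i^{1/2}(t)\varepsilon_i^{1/2}(st)^{-1}$, verify multiplicativity by letting the two copies of $c(s,t)^{\pm e}$ cancel via $\eta(-I)=(-I)^e$, and then pass to the projective representation $\tilde\eta_e$ modulo $\langle -I\rangle$ and invoke Proposition~\ref{proposition: K0} (together with the definition of $k_\varepsilon$ as the field cut out by $\tilde\varepsilon^{1/2}$) to conclude that $\tilde\eta_e$ factors through $K_0k_\varepsilon$, hence $\eta_e$ through a degree-at-most-$2$ extension of it. The exposition of the second step could be slightly tightened — the cleanest phrasing is simply that $\tilde\eta_e$ is a homomorphism into $\Aut(V)/\langle -I\rangle$ which, by Proposition~\ref{proposition: K0} and the definition of $k_\varepsilon$, kills $G_{K_0k_\varepsilon}$ — but the ideas and ingredients match the paper's proof.
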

\begin{proof}
  For $s,t\in G_{k} $ define
  \begin{align*}
    c_{\varepsilon}(s,t):=\frac{\varepsilon_i^{{1}/{2}}(s)\varepsilon_i^{{1}/{2}}(t)}{\varepsilon_i^{{1}/{2}}(st)}\in\{\pm1\}.
  \end{align*}
  Then
  \begin{align*}
     \eta_e (st) &=\varepsilon_i(st)^{-{e}/{2}}\otimes \eta(\varepsilon_i^{{1}/{2}}(st)\otimes\theta^{\alpha_B}_i (st)) \\ &=
                                                                                                                                  c_\varepsilon(s,t)^{e}\varepsilon_i(s)^{-{e}/{2}}\varepsilon_i(t)^{-{e}/{2}}\otimes \eta(c_\varepsilon(s,t)\varepsilon_i^{{1}/{2}}(s)\varepsilon_i^{{1}/{2}}(t)\otimes\theta^{\alpha_B}_i (s)\theta^{\alpha_B}_i(t))\\
    &=  \eta_e(s) \eta_e(t);
  \end{align*}
  here we have used that $\eta(c_\varepsilon(s,t))=c_\varepsilon(s,t)^e$, which follows from the hypothesis $\eta(-I)=(-I)^ e$.

 Let $\tilde\eta_e\colon G_{k}\ra \GL(V)/\langle -I\rangle $ be the group homomorphism naturally induced by $\eta_e$. It factors through $K_0k_\varepsilon$ by Proposition \ref{proposition: K0}, and therefore $\eta_e$ factors through an at most quadratic extension of $K_0k_\varepsilon$.
\end{proof}

\section{Scenarios of applicability}\label{section: scenarios}

Let $A_0$ be an abelian variety defined over $k_0$ satisfying Hypothesis \ref{hypothesis: potGL2}. In this section, we use the description of the Sato--Tate group of $A_0$ achieved in \S\ref{section: STgroups} to prove the Sato--Tate conjecture in certain cases. The two main theorems of this section generalize \cite[Thm 3.4 and Thm 3.6]{Tay19}. The proofs build heavily on  those in \cite{Tay19}, which in turn are deeply inspired by those in \cite{Joh17}. Many ideas are in fact reminiscent of the seminal works \cite{HSBT10} and \cite{Har09}.

\begin{theorem}\label{theorem: powersecs}
Suppose that $k_0$ is a totally real number field and that $A_0$ is an abelian variety defined over $k_0$ of dimension $g\geq 1$ which is $\Qbar$-isogenous to the power of either:
\begin{enumerate}[i)]
\item an elliptic curve $B$ without CM; or 
\item an abelian surface $B$ with QM.
\end{enumerate}
Suppose that the endomorphism field $K_0$ of $A_0$ is a solvable extension of $k_0$. Then Conjecture~\ref{conjecture: ST} holds. 
\end{theorem}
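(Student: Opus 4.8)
The plan is to follow the strategy of Johansson and N.~Taylor, specializing the general machinery of \S\ref{section: STgroups}--\S\ref{section: scenarios} to the case at hand. In both cases i) and ii) the centre $M$ of $\End(B)$ equals $\Q$, so $m=[M:\Q]=1$ and hence $k=k_0$ by Theorem~\ref{theorem: inddec}; moreover $\Rr=(V_\lambda(B)^{\alpha_B})_\lambda$ is the rank~$2$ weakly compatible system of Proposition~\ref{proposition: Ribet}, and the argument below will treat the two cases uniformly. By Proposition~\ref{proposition: STA} and Corollary~\ref{corollary: STA0} the irreducible representations of $\ST(A_0)$ are exactly the $\Symm^e\otimes\eta$ with $e\geq0$ and $\eta$ an irreducible representation of $H$ with $\eta(-I)=(-I)^e$, and by Serre's criterion \cite[App.~to Chap.~I]{Ser89}, Conjecture~\ref{conjecture: ST} for $A_0$ (with $E=k_0$) follows once we show that for every \emph{nontrivial} such $\Symm^e\otimes\eta$ the partial Euler product $L^S(\Symm^e\otimes\eta,A_0,s)$ is holomorphic and non-vanishing on $\{\Re(s)\geq1\}$.

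The first step is to recognize this Euler product as attached to the system $\Rr$. Substituting the explicit Satake data $g_{1,\p}$ and $h_\p$ into \eqref{equation: Lfunctype} and using Lemma~\ref{lemma: eta_e}, the factor at $\p\notin S$ becomes $\det\bigl(1-\Symm^e(\varrho_1^{\alpha_B}(\Frob_\p))\otimes\eta_e(\Frob_\p)\,\Nm(\p)^{-s-e/2}\bigr)^{-1}$, so that $L^S(\Symm^e\otimes\eta,A_0,s)=L^S(W,s+\tfrac{e}{2})$, where $W:=\Symm^e(\Rr)\otimes\eta_e$ is an $\ell$-adic representation of $G_k$ belonging to a weakly compatible system pure of weight $e$. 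Writing $u=s+\tfrac{e}{2}$, it suffices to prove that $L^S(W,u)$ is holomorphic and non-vanishing for $\Re(u)\geq1+\tfrac{e}{2}$. Two observations will be used repeatedly: by strong irreducibility of $\Rr$ (Proposition~\ref{proposition: Ribet}) the representation $\Symm^e(\Rr)$ is irreducible of dimension $e+1$ after restriction to \emph{any} finite extension of $k$, so that for $e\geq1$ no constituent of any such restriction of $W$ is one-dimensional; and $\eta_e$ factors through $K_e/k$, which is solvable, since it is at most quadratic over $K_0k_\varepsilon$ by Lemma~\ref{lemma: eta_e} and $K_0k_\varepsilon/k$ is solvable because $k_\varepsilon/k$ is abelian and $K_0/k=K_0/k_0$ is solvable by hypothesis --- this is the only place where the solvability assumption intervenes.

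Now I would bring in potential automorphy. Applying Theorem~\ref{theorem: automorphy} over the totally real field $k$ with $e_1=e$ and auxiliary field $k^*=K_e$ produces a totally real extension $k'/k$, Galois over $\Q$ and linearly disjoint from $K_e$, with $\Symm^e(\Rr|_{G_{k'}})$ cuspidal automorphic on $\GL_{e+1}(\A_{k'})$. Since $k'$ is linearly disjoint from $K_e$, $\eta_e|_{G_{k'}}$ still factors through the solvable group $\Gal(K_e/k)$ and is thus a sum of representations induced from finite-order Hecke characters along solvable extensions of $k'$; combining solvable base change of $\Symm^e(\Rr|_{G_{k'}})$, stability of cuspidality under twists by Hecke characters, solvable automorphic induction and the projection formula, one obtains that $W|_{G_{k'}}$ is isobarically automorphic over $k'$. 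Then I descend to $k$: by Brauer's induction theorem the trivial character of $\Gal(k'/k)$ equals $\sum_i n_i\,\Ind_{\Gal(k'/k_i)}^{\Gal(k'/k)}\chi_i$ with $n_i\in\Z$, the subgroups taken \emph{elementary} --- so that each $k'/k_i$ is solvable --- and $\chi_i$ one-dimensional, whence $L^S(W,u)=\prod_i L^{S_i}(W|_{G_{k_i}}\otimes\chi_i,u)^{n_i}$ by inductivity of $L$-functions. As $(W|_{G_{k_i}}\otimes\chi_i)|_{G_{k'}}$ is isobarically automorphic and $k'/k_i$ is solvable, solvable descent makes $W|_{G_{k_i}}\otimes\chi_i$ isobarically automorphic over $k_i$; for $e\geq1$ its cuspidal constituents are pure of weight $e$ and live on $\GL_n$ with $n\geq2$ (by the dimension observation above), so each $L^{S_i}(W|_{G_{k_i}}\otimes\chi_i,u)$ is entire and non-vanishing for $\Re(u)\geq1+\tfrac{e}{2}$ by the standard analytic properties of cuspidal automorphic $L$-functions on $\GL_{n}$, $n\geq2$ (Godement--Jacquet, Jacquet--Shalika, Shahidi), and therefore so is the finite $\Z$-power product $L^S(W,u)$. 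For $e=0$, $W=\eta_0$ is a nontrivial irreducible Artin representation with solvable image, hence monomial, $\eta_0=\Ind_{F_0}^{k}\psi_0$ for a nontrivial finite-order Hecke character $\psi_0$, and $L^S(\eta_0,u)=L^{S_0}(\psi_0,u)$ is entire and non-vanishing on $\{\Re(u)\geq1\}$. In all cases $L^S(\Symm^e\otimes\eta,A_0,s)$ is holomorphic and non-vanishing on $\{\Re(s)\geq1\}$, and Conjecture~\ref{conjecture: ST} follows by Serre's criterion.

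I expect the main difficulty to be bookkeeping rather than a single deep input --- potential automorphy is imported from \cite{BLGGT14} and the analytic facts on automorphic $L$-functions are classical. One must arrange the potential automorphy of $\Symm^e(\Rr)$ so that every base change, automorphic induction and descent occurs along a \emph{solvable} extension, which is exactly what forces the use of elementary subgroups in Brauer's theorem and the linear disjointness of $k'$ from $K_e$; and one must ensure that $W$ never acquires a one-dimensional automorphic constituent over the intermediate fields, so that the non-vanishing results for cuspidal $L$-functions on $\GL_{\geq2}$ genuinely apply --- this is where strong irreducibility and purity of $\Rr$ enter, and where the argument is necessarily powerless for the trivial representation, whose partial Euler product has a pole at $s=1$. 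The whole argument is a careful elaboration of \cite{Joh17} and \cite{Tay19}, and in the subcase where $A_0$ is a non-CM elliptic curve it recovers the classical Sato--Tate theorem over totally real fields.
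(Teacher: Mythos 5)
Your overall strategy --- reducing to the invertibility of $L^S(\Symm^e\otimes\eta,A_0,s)$, identifying it with the partial $L$-function of the weakly compatible system $\Symm^e(\Rr)\otimes\eta_e$, making $\Symm^e(\Rr)$ potentially automorphic via Theorem~\ref{theorem: automorphy}, and then combining Brauer induction with solvable base change (noting that $K_e/k_0$ is solvable, which is where the hypothesis on $K_0/k_0$ enters) --- is the one the paper follows. However, the way you deploy Brauer's theorem creates a genuine gap. You apply Brauer to the trivial character of $\Gal(k'/k)$, which reduces you to proving that $W|_{G_{k_i}}\otimes\chi_i=\Symm^e(\Rr)|_{G_{k_i}}\otimes(\eta_e|_{G_{k_i}}\otimes\chi_i)$ is isobarically automorphic over $k_i$. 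Here $\eta_e|_{G_{k_i}}$ is in general still a higher-dimensional Artin representation, and automorphy of $\Symm^e(\Rr)$ twisted by such a representation is precisely the kind of statement that is not available and that Brauer induction is meant to eliminate. Your attempt to supply it fails at two points. First, a representation of a solvable group need not be a genuine (non-virtual) direct sum of monomial representations: solvable groups are not in general M-groups, so Brauer only yields a virtual $\Z$-combination, from which ``$W|_{G_{k'}}$ is isobarically automorphic'' does not follow. Second, ``solvable descent'' is a statement about \emph{cuspidal} representations (iterated cyclic descent \`a la Arthur--Clozel, using irreducibility on the Galois side); descending an isobaric sum whose cuspidal constituents may be permuted by $\Gal(k'/k_i)$ is not a standard available tool. (The same ``solvable hence monomial'' slip appears in your $e=0$ case, though there the conclusion is classical anyway.)

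The paper's route avoids all of this: Brauer is applied to $\eta_e$ itself, viewed as a representation of $\Gal(L/k_0)$ with $L=k'K_e$, writing $\eta_e=\bigoplus_i c_i\Ind^{E_i}_{k_0}(\chi_i)$ with each $\Gal(L/E_i)$ solvable. By inductivity of $L$-functions this expresses $L^S(\Symm^e(\Rr)\otimes\eta_e,s)$ as a $\Z$-power product of the $L^S(\Symm^e(\Rr)|_{G_{E_i}}\otimes\chi_i,s)$, i.e.\ of \emph{character} twists of the strongly irreducible system $\Rr_e=\Symm^e(\Rr)$. Each of these is cuspidal automorphic, by solvable base change from $k'$ to $L$ followed by solvable descent from $L$ to $E_i$ (both applied to the cuspidal system $\Rr_e$) and a twist by the Hecke character $\chi_i$, and has degree $e+1\geq 2$, whence it is invertible by the standard analytic theory on $\GL_n$. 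If you reorganize your argument so that Brauer decomposes $\eta_e$ rather than the trivial character of $\Gal(k'/k)$, your proof goes through and coincides with the paper's.
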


\begin{proof}
The setting of the theorem is that of an abelian variety $A_0$ satisfying Hypothesis \ref{hypothesis: main} with $M=\Q$. In particular, we have $k=k_0$. By Theorem \ref{theorem: inddec}, there is an isomorphism of $\Qbar_\ell[G_{k_0}]$-modules
$$
V_\ell(A)\otimes \Qbar_\ell \simeq V_\ell(B)^{\alpha_B} \otimes_{\Qbar_\ell} V_\ell(B,A)^{\alpha_B}\,, 
$$
where $V_{\ell}(B,A)^{\alpha_B}$ has dimension $g$. 
We want to show that the partial $L$-function
\begin{align}\label{eq: L of symm e}
  L^S(\Symm^e\otimes \eta, A_0, s)
\end{align}
as defined in \eqref{equation: Lfunctype} is invertible as long as not both $e=0$ and $\eta$ is trivial. We may assume that $e\geq 1$, since otherwise we have a partial Artin $L$-function for which the result is well known. 

To show invertibility, we will apply the Taylor--Brauer reduction method of \cite{HSBT10} closely following the presentation of \cite{MM09}. We first invoke Theorem~\ref{theorem: automorphy} to obtain a Galois extension\footnote{It is not really necessary to assume that $k'/k$ is linearly disjoint from $K_0$ over $k_0$.} $k'/k$ such that 
$$
\Rr_e|_{G_{k'}}\,,\qquad \text{where } \Rr_e:=\Symm^e(V_\lambda(B)^{\alpha_B})_\lambda\,,
$$ 
is automorphic. Note that the partial $L$-function of \eqref{eq: L of symm e} is the normalized partial $L$-function $L^S(\Rr_e\otimes \eta_e,s)$ attached to the  weakly compatible system of $\lambda$-adic representations $\Rr_e\otimes \eta_e$.

Set $L=k'K_e$, where $K_e$ is the field introduced in Lemma \ref{lemma: eta_e}, and inflate $\eta_e$ to a representation of $\Gal(L/k_0)$. By Brauer's induction theorem, we may write $\eta_e$ as a finite sum
$$
\eta_e=\bigoplus_{i} c_i \Ind^{E_i}_{k_0}(\chi_i)\,,
$$ 
where $c_i$ is an integer, $E_i/k_0$ is a subextension of $L/k_0$ such that $\Gal(L/E_i)$ is solvable, and $\chi_i\colon \Gal(L/E_i)\rightarrow \C^\times$ is a character. We therefore have
$$
L^S(\Rr_e\otimes \eta_e, s)=\prod_i L^S(\Rr_e|_{G_{E_i}}\otimes \chi_i, s)^{c_i}\,,
$$
and it suffices to show that the $L$-functions $L^S(\Rr_e|_{G_{E_i}}\otimes \chi_i, s)$ are invertible.

By assumption, $K_0/k_0$ is solvable, and thus so is $K_e/k_0$. Therefore $L/k'$ is solvable. Then automorphic base change \cite{AC89} implies that $\Rr_e|_{G_{L}}$ is automorphic. Since $L/E_i$ is solvable, automorphic descent implies that 
$\Rr_e|_{G_{E_i}}$ is automorphic. Via Artin reciprocity, we may interpret $ \chi_i$ as a Hecke character of $E_i$, and deduce that 
\begin{equation}\label{equation: automsyst}
\Rr_e|_{G_{E_i}}\otimes \chi_i
\end{equation}
is automorphic. This implies that  $L^S(\Rr_e|_{G_{E_i}}\otimes \chi_i, s)$ is invertible.
\end{proof}

\begin{remark}\label{remark: CMB}
A statement analogous to Theorem \ref{theorem: powersecs} holds true when $B$ is a CM elliptic curve. In this case, the solvability assumption on $K_0/k_0$ is not necessary, since the automorphicity of Hecke characters is well known. We will however disregard the CM setting in this section, since it has already been treated in \cite[\S3]{Joh17}.
\end{remark}

\begin{theorem}\label{theorem: powerssurfs}
Suppose that $k_0$ is a totally real number field and that $A_0$ is an abelian variety defined over $k_0$ of dimension $g\geq 1$ which is $\Qbar$-isogenous to the power of either:
\begin{enumerate}[i)]
\item an abelian surface $B$ wih RM; or 
\item an abelian fourthfold $B$ with QM\footnote{Recall that in our terminology this means that $\End(B)$ is a quaternion algebra over a quadratic number field.}.
\end{enumerate}
Suppose that the endomorphism field $K_0$ of $A_0$ is a solvable extension of $k_0$ and that the field extension $k/k_0$ from Theorem \ref{theorem: inddec} is trivial. Then Conjecture~\ref{conjecture: ST} holds.
\end{theorem}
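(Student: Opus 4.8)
The hypotheses guarantee that $A_0$ satisfies Hypothesis~\ref{hypothesis: potGL2} with $m=[M:\Q]=2$ — the only feature of cases $i)$ and $ii)$ that will matter — so the results of Sections~\ref{section: GL2} and~\ref{section: STgroups} apply. Since $k/k_0$ is trivial, $k=k_0$ and $M_0=M$, so by Theorem~\ref{theorem: inddec} we have $V_\ell(A_0)\otimes\Qbar_\ell\simeq\bigoplus_{i=1}^{2}V_{\lambda_i}(B)^{\alpha_B}\otimes_{\Qbar_\ell}V_{\lambda_i}(B,A)^{\alpha_B}$ and by Proposition~\ref{proposition: STA} that $\ST(A_0)=\ST(A)\simeq\prod_{i=1}^{2}\SU(2)^{(i)}\otimes H$; its irreducible representations are those of \eqref{equation: reptyp2}, of the form $\Symm^{e_1}\otimes\Symm^{e_2}\otimes\eta$. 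By Serre's criterion \cite[App. to Chap. I]{Ser89} it is enough to prove that $L^S(\Symm^{e_1}\otimes\Symm^{e_2}\otimes\eta,A_0,s)$ is invertible for every such representation that is nontrivial. When $e_1=e_2=0$ this is a partial Artin $L$-function, for which the assertion is classical; when exactly one of $e_1,e_2$ is positive it is a Hecke twist of the standard $L$-function of a cuspidal automorphic representation of $\GL_n$, $n\ge 2$, and invertibility follows precisely as in the proof of Theorem~\ref{theorem: powersecs}. I therefore assume $e_1,e_2\ge 1$ and write $e=e_1+e_2$; I also write $\Rr^{(i)}$ for $\Rr$ regarded via the embedding $\sigma_i$, so that the two tensor factors above correspond to $\Rr^{(1)}$ and $\Rr^{(2)}$.

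The plan is to run the Taylor--Brauer argument of Theorem~\ref{theorem: powersecs}, now feeding the two symmetric-power factors into a Rankin--Selberg $L$-function. As there, $L^S(\Symm^{e_1}\otimes\Symm^{e_2}\otimes\eta,A_0,s)$ is the normalized partial $L$-function of the tensor product of the weakly compatible systems $\Symm^{e_1}\Rr^{(1)}$ and $\Symm^{e_2}\Rr^{(2)}$, twisted by the Artin representation $\eta_e$ of Lemma~\ref{lemma: eta_e}. First I would apply Theorem~\ref{theorem: automorphy} to the pair $e_1,e_2$ to find a totally real extension $k'/k$, Galois over $\Q$, for which $\Symm^{e_1}(\Rr|_{G_{k'}})$ and $\Symm^{e_2}(\Rr|_{G_{k'}})$ are automorphic. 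Putting $L=k'K_e$ and inflating $\eta_e$ to $\Gal(L/k_0)$, Brauer's theorem yields a decomposition $\eta_e=\bigoplus_j c_j\Ind_{k_0}^{E_j}(\chi_j)$ with $\Gal(L/E_j)$ solvable and $\chi_j$ a character, so that
\[
L^S(\Symm^{e_1}\otimes\Symm^{e_2}\otimes\eta,A_0,s)=\prod_j L^S\bigl(\Symm^{e_1}\Rr^{(1)}|_{G_{E_j}}\otimes\Symm^{e_2}\Rr^{(2)}|_{G_{E_j}}\otimes\chi_j,\,s\bigr)^{c_j},
\]
and it suffices to treat a single $j$. Since $K_0/k_0$ is solvable and $K_e/K_0k_\varepsilon$ is at most quadratic (Lemma~\ref{lemma: eta_e}), $L/k'$ is solvable; solvable base change \cite{AC89} promotes the automorphy of $\Symm^{e_i}(\Rr|_{G_{k'}})$ to that of $\Symm^{e_i}(\Rr|_{G_L})$, and, $L/E_j$ being solvable, solvable descent then gives automorphy of $\Symm^{e_i}(\Rr|_{G_{E_j}})$ for $i=1,2$. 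Through the embeddings $\iota\circ\sigma_1$ and $\iota\circ\sigma_2$ these produce cuspidal automorphic representations $\pi_1$ of $\GL_{e_1+1}(\A_{E_j})$ and $\pi_2$ of $\GL_{e_2+1}(\A_{E_j})$ — cuspidal because the symmetric power of a strongly irreducible rank $2$ system is again strongly irreducible (Proposition~\ref{proposition: Ribet}~$iii)$) — and, reading $\chi_j$ as a Hecke character of $E_j$, the $j$-th factor is, up to normalization, the Rankin--Selberg $L$-function $L^S\bigl(s,\pi_1\times(\pi_2\otimes\chi_j)\bigr)$.

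By the theory of Rankin--Selberg $L$-functions, and in particular Shahidi's non-vanishing theorem on the line $\Re(s)=1$ \cite{Sha81}, this $L$-function is holomorphic and non-vanishing on $\Re(s)\ge 1$ except possibly for a simple pole at $s=1$, which occurs only if $\pi_1\simeq\widetilde{\pi_2\otimes\chi_j}$. I claim this cannot happen. Comparing dimensions would force $e_1=e_2$; translating the isomorphism back to Galois representations and using $(\Symm^{e_1}\Rr^{(2)})^{\vee}\simeq\Symm^{e_1}\Rr^{(2)}\otimes(\det\Rr^{(2)})^{-e_1}$, one obtains $\Symm^{e_1}\Rr^{(1)}|_{G_{E_j}}\simeq\Symm^{e_1}\Rr^{(2)}|_{G_{E_j}}\otimes\psi$ for some character $\psi$. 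Projectivizing and using that $\mathbb P\circ\Symm^{e_1}\colon\PGL_2\hookrightarrow\PGL_{e_1+1}$ is injective, this gives $V_{\lambda_1}(B)^{\alpha_B}|_{G_{E_j}}\simeq V_{\lambda_2}(B)^{\alpha_B}|_{G_{E_j}}\otimes\psi'$ for some character $\psi'$; comparing determinants, the cyclotomic parts cancel (Proposition~\ref{proposition: Ribet}~$ii)$) and $\psi'^2$ has finite order, hence so does $\psi'$, and after a finite base change $K'/E_j$ we obtain $V_{\lambda_1}(B)^{\alpha_B}|_{G_{K'}}\simeq V_{\lambda_2}(B)^{\alpha_B}|_{G_{K'}}$, contradicting Proposition~\ref{proposition: noiso}. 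Hence no factor has a pole, so $L^S(\Symm^{e_1}\otimes\Symm^{e_2}\otimes\eta,A_0,s)$ is holomorphic and non-vanishing on $\Re(s)\ge 1$, which is what Serre's criterion requires, and Conjecture~\ref{conjecture: ST} follows.

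The main obstacle is this pole exclusion: upgrading the mere non-isomorphism of the rank $2$ pieces (Proposition~\ref{proposition: noiso}) to the non-contragredience of the two symmetric-power automorphic representations, keeping track of the finite-order twists coming from $\varepsilon$, from the duality, and from $\chi_j$. It is also here that the assumption $m\le 2$ is indispensable: for $m\ge 3$ the partial $L$-functions attached to the irreducible representations of $\ST(A_0)$ would involve triple (and higher) product $L$-functions of automorphic forms, which are not known to be invertible.
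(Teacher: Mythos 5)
Your proposal is correct and follows essentially the same route as the paper's proof: reduce to the Rankin--Selberg $L$-function of the two symmetric-power systems via Brauer induction and solvable base change/descent, then invoke Shahidi's theorem and rule out the pole using Proposition~\ref{proposition: noiso}. The only difference is that you spell out the non-contragredience argument (projectivizing and comparing determinants) that the paper delegates to the discussion preceding \cite[Thm.~5.3]{Har09}, and you explicitly treat the degenerate cases $e_1e_2=0$, both of which the paper handles implicitly.
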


\begin{proof}
The hypotheses of the theorem are a reformulation of the assumption that~$A_0$ satisfies Hypothesis \ref{hypothesis: main} and that $M$ is a (real) quadratic field. 

Since $k=k_0$, we have that $M=M_0$. By Theorem \ref{theorem: inddec}, we have an isomorphism of $\Qbar_\ell[G_{k_0}]$-modules
$$
V_\ell(A)\otimes \Qbar_\ell \simeq V_{\lambda}(B)^{\alpha_B}\otimes V_{\lambda}(B,A)^{\alpha_B}\oplus
 V_{\overline\lambda}(B)^{\alpha_B}\otimes V_{\overline\lambda}(B,A)^{\alpha_B}\,,
$$
where $\lambda,\overline\lambda$ are attached to extensions to $F$ of the two distinct embeddings of $M_0$ into $\Qbar_\ell$. Note that $V_{\lambda}(B,A)^{\alpha_B}$ has dimension $g/2$ as a $\Qbar_\ell$-vector space. It will suffice to show that the partial $L$-function
$$
L^S(\Symm^{e_1} \otimes \Symm^{e_2} \otimes \eta, A_0, s)
$$
attached to \eqref{equation: reptyp2} is invertible whenever $e_1>0$ or $e_2>0$. Invoke Theorem \ref{theorem: automorphy} to obtain a Galois extension $k'/k_0$ such that $\Rr_{e_1}|_{G_{k'}}$ and $\overline\Rr_{e_2}|_{G_{k'}}$ are automorphic, where
$$
\Rr_{e_1}:=\Symm^{e_1}(V_{\lambda}(B)^{\alpha_B})_\lambda\,,\qquad
\overline\Rr_{e_2}:=\Symm^{e_2}(V_{\overline\lambda}(B)^{\alpha_B})_\lambda\,.
$$ 
 Set $L=k'K_{e_1+e_2}$ and inflate $\eta_{e_1+e_2}$ to a representation of $\Gal(L/k_0)$. Note that
\begin{align*}
  L^S(\Symm^{e_1}\otimes\Symm^{e_2}\otimes \eta, A_0, s)= L^S(\Rr_{e_1}\otimes \overline\Rr_{e_2}\otimes \eta_{e_1+e_2},s) .
\end{align*}

As in the proof of Theorem \ref{theorem: powersecs}, by Brauer's induction theorem applied to $\eta_{e_1+e_2}$, there exist integers $c_i$, subextensions $E_i/k_0$ of $L/k_0$ with $\Gal(L/E_i)$  solvable, and characters $\chi_i\colon \Gal(L/E_i)\rightarrow \C^\times$ such that
\begin{align}\label{eq: e1+e2}
    L^S(\Rr_{e_1}\otimes \overline\Rr_{e_2}\otimes \eta_{e_1+e_2},s)=\prod_i L^S(\Rr_{e_1}|_{G_{E_i}}\otimes \overline\Rr_{e_2}|_{G_{E_i}}\otimes \chi_i,s)^{c_i}\,.
    \end{align}
As in the proof of Theorem \ref{theorem: powersecs} we have that $L/k'$ is solvable. Using automorphic base change and automorphic descent, we find that 
\begin{equation}\label{equation: automsyst2}
  \Rr_{e_1}|_{G_{E_i}}\text{ and } \overline\Rr_{e_2}|_{G_{E_i}}\otimes \chi_i
\end{equation}
are automorphic. The nonvanishing of the $L$-function attached to \eqref{eq: e1+e2} follows from \cite[Theorem 5.2]{Sha81}. The holomorphicity follows from the fact that the systems of \eqref{equation: automsyst2} do not become isomorphic after a finite base change, as granted by Proposition \ref{proposition: noiso}. 
\end{proof}

\begin{remark}
When $g\leq 3$, the hypothesis that $K_0/k_0$ be solvable in Theorem \ref{theorem: powersecs} and Theorem \ref{theorem: powerssurfs} is always satisfied. This follows from the classification results achieved in \cite{FKRS12} and \cite{FKS20}. 
\end{remark}

\subsection*{Examples: modular abelian varieties}
A natural source of examples of abelian varieties satisfying Hypothesis \ref{hypothesis: main} are the modular abelian varieties associated to modular forms by the Eichler--Shimura construction. Let $f=\sum a_mq^m\in S_2(\Gamma_1(N))$ be a non-CM newform of Nebentypus $\varepsilon$, and let $F_f=\Q(\{a_m\}_m)$ be the number field generated by its Fourier coefficients. Put $g=[F_f :\Q]$. There exists an abelian variety $A_f$ defined over $\Q$ of dimension $g$ which is uniquely characterized up to isogeny by the equality of $L$-functions
\begin{align*}
  L(A_f,s)=\prod_{\sigma: F_f\hookrightarrow \C}L(f^\sigma,s),
\end{align*}
and satisfying that $\End(A_f)\simeq F_f$. The variety $A_f$ is simple, but it may not be geometrically simple. The structure of the base change $A_{f,\Qbar}$ was determined by Ribet \cite{Rib92} and Pyle \cite{Pyl02}. They proved that $A_{f,\Qbar}\sim B^d$ for some abelian variety $B/\Qbar$ satisfying that:
\begin{itemize}
\item $B$ is a $\Q$-variety, and
  \item $\End(B)$ is a central division algebra over a totally real field $M_f$ of Schur index $n\leq 2$ and $n[M_f:\Q]=\dim B$.
\end{itemize}
Moreover, the center $M_f$ of $\End(B)$ can be described in terms of $f$ as the field generated by all the numbers $a_m^2/\varepsilon(m)$ with $m$ coprime to $N$.

Denote by $K_f$ the smallest field of definition of $\End(A_{f,\Qbar})$ (this is the field called $K_0$ in \S \ref{section: td}). The extension $K_f/\Q$ is abelian (cf. \cite[Proposition 2.1]{GL01}), hence in particular solvable.

All these properties of the varieties $A_f$ give the following consequence of Theorems \ref{theorem: powersecs} and \ref{theorem: powerssurfs}.

\begin{corollary}\label{corollary modular forms}
  Let $f = \sum a_m q^m\in S_2(\Gamma_1(N))$ be a newform of nebentype~$\varepsilon$. If $f$ is non-CM, suppose that the number field $M_f=\Q(\{a_m^2/\varepsilon(m)\}_{(m,N)=1})$ has degree at most~$2$ over~$\Q$. Then the Sato--Tate conjecture is true for $A_f$.
\end{corollary}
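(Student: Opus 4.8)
The plan is to derive the corollary formally from Theorems~\ref{theorem: powersecs} and~\ref{theorem: powerssurfs} together with the classical theory of abelian varieties with complex multiplication; there is no new analytic input. I would split into the cases $f$ non-CM and $f$ CM.

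Suppose first that $f$ is non-CM. By the results of Ribet and Pyle recalled above, $A_f$ satisfies Hypothesis~\ref{hypothesis: main}: it is $\Qbar$-isogenous to $B^d$ for a building block $B$ whose endomorphism algebra is a central division algebra over the totally real field $M_f$, of Schur index $n\leq 2$, with $n[M_f:\Q]=\dim(B)$. The combinatorial heart of the argument is that the assumption $[M_f:\Q]\leq 2$, together with $n\leq 2$, leaves exactly four possibilities: $\dim(B)=1$ with $\End(B)=\Q$ (so $B$ is a non-CM elliptic curve); $\dim(B)=2$ with $\End(B)$ a quaternion algebra over $\Q$ (so $B$ is a QM abelian surface); $\dim(B)=2$ with $\End(B)=M_f$ a real quadratic field (so $B$ is an RM abelian surface); and $\dim(B)=4$ with $\End(B)$ a quaternion algebra over a real quadratic field (so $B$ is a QM abelian fourfold). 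These are, respectively, cases i)--ii) of Theorem~\ref{theorem: powersecs} and cases i)--ii) of Theorem~\ref{theorem: powerssurfs}.

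It then suffices to check the two standing hypotheses of those theorems with $k_0=\Q$, namely that $K_0=K_f$ is solvable over $\Q$ and that the field $k$ furnished by Theorem~\ref{theorem: inddec} is trivial. The first is immediate since $K_f/\Q$ is abelian (recalled above, cf.\ \cite[Proposition 2.1]{GL01}). For the second I would argue as follows: by Theorem~\ref{theorem: inddec}, $[k:\Q]=[M:M_0]$ with $M=M_f$ and $M_0=M_f\cap\End(A_f)$; since $\End(A_f)\simeq F_f$ and, for $p\nmid N$, $\varepsilon(p)=(a_p^2-a_{p^2})/p\in F_f$, each generator $a_m^2/\varepsilon(m)$ of $M_f$ already lies in $F_f$, so $M_0=M_f=M$ and $k=\Q$. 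With all hypotheses in place, Theorems~\ref{theorem: powersecs} and~\ref{theorem: powerssurfs} yield Conjecture~\ref{conjecture: ST} for $A_f$.

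Finally, if $f$ is CM then $A_f$ has potential complex multiplication; concretely $A_{f,\Qbar}$ is $\Qbar$-isogenous to a power of a CM elliptic curve, as one sees from the fact that each $\rho_{f^\sigma}$ is induced from a Hecke character of an imaginary quadratic field. Since the standing hypotheses of Sections~\ref{section: GL2}--\ref{section: scenarios} exclude potential CM, here I would instead run the same Taylor--Brauer reduction directly: after a finite base change and Brauer induction, all the partial Euler products occurring in Serre's criterion become products of Hecke $L$-functions, whose holomorphy and non-vanishing on $\Re(s)\geq 1$ is classical; see Remark~\ref{remark: CMB} and \cite[\S3]{Joh17}. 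I do not expect a genuine obstacle in this corollary: the only points that require a little care are the verification that $k=k_0$ (via $M_f\subseteq F_f$) and the separate, classical treatment of the CM case, the entire analytic depth being already packaged in Theorems~\ref{theorem: powersecs} and~\ref{theorem: powerssurfs}.
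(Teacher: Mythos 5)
Your proposal is correct and matches the paper's intended argument, which is left implicit in the discussion preceding the corollary: you correctly identify the four dichotomy cases for $B$ from $n\leq 2$ and $[M_f:\Q]\leq 2$, you supply the verification that $k=\Q$ (via $M_f\subseteq F_f$, so that $M_0=M_f=M$ and $[M:M_0]=1$) which the paper only asserts informally, and you correctly route the CM case through Remark~\ref{remark: CMB} and \cite[\S3]{Joh17} rather than Theorems~\ref{theorem: powersecs}--\ref{theorem: powerssurfs}, whose Hypothesis~\ref{hypothesis: potGL2} excludes potential CM.
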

 Examples of these modular forms are certainly abundant even for small levels $N$. For example, in the tables of \cite{Que09} (the complete tables are available at \cite{Que12}), where levels up to $500$ are considered, one finds many examples of modular abelian varieties $A_f$ which are geometrically isogenous to powers of elliptic curves (\cite[\S 4.1]{Que12}), abelian surfaces with RM by a quadratic field $M_f$  (\cite[\S 4.2]{Que12}), abelian surfaces with QM  by a quaternion algebra over $\Q$ (\cite[\S 5.1]{Que12}) or abelian fourfolds with QM by a quaternion algebra over a quadratic field $M_f$ (\cite[\S 5.2]{Que12}).

 Suppose that $f\in S_2(\Gamma_0(N),\varepsilon)$ is a modular form satisfying the hypotheses of Corollary \ref{corollary modular forms}, and let $\chi$ be a Dirichlet character. Then the twisted modular form $h=f\otimes\chi$ has nebentype character $\varepsilon_h=\varepsilon\chi^2$, and its Fourier coefficients $b_m$ satisfy that $b_m=a_m\chi(m)$ for $m$ coprime to the conductor of $\chi$. Therefore, $b_m^2/\varepsilon_h(m)=a_m^2/\varepsilon(m)$ so the field $M_h$ coincides with $M_f$ and therefore $h$ also satisfies the hypothesis of Corollary \ref{corollary modular forms}. Since $\dim A_h=[F_h\colon \Q]$ and $F_h$ contains the field generated by the values of $\varepsilon_h$, we see that Corollary \ref{corollary modular forms} proves the Sato--Tate conjecture for varieties of the form $A_h$ of arbitrarily large dimension.

\end{document}